\journal{ }
\numberwithin{equation}{section}
\newtheorem{theorem}{Theorem}[section]
\newtheorem{lemma}[theorem]{Lemma}
\newtheorem{remark}[theorem]{Remark}
\newtheorem{proposition}[theorem]{Proposition}
\newtheorem{problem}{RHP}[section]
\newtheorem{main theorem}{Theorem}
\numberwithin{figure}{section}
\begin{document}
	
	
	
	\begin{frontmatter}
		\title{The asymptotic stability of solitons in the focusing Hirota equation on the line}
		

		\author[inst2]{Ruihong Ma}

		\author[inst2]{Engui Fan$^{*,}$  }
		
		\address[inst2]{ School of Mathematical Sciences and Key Laboratory of Mathematics for Nonlinear Science, Fudan University, Shanghai,200433, China\\
			* Corresponding author and e-mail address: faneg@fudan.edu.cn  }
		
		
		
		

		\begin{abstract}
			In this paper, the $\overline\partial$-steepest descent method  and B\"acklund transformation are used  to
			study the asymptotic stability of solitons to  the   Cauchy problem of  focusing Hirota equation
			\begin{align}\nonumber
				& iq_t+\alpha(2|q|^2q+q_{xx})+i\beta(q_{xxx}+6|q|^2q_x)=0,  \\\nonumber
				& q(x,0)=q_0(x),
			\end{align}
			where $q_0 \in H^1(\mathbb{R})\,\cap\,L^{2,s}(\mathbb{R}),s\in(\frac{1}{2},1] .$
			It is shown that    the solution of the Cauchy problem can be expressed    in term of the solution of a Riemann-Hilbert (RH) problem.
			The solution of the RH problem     is further  decomposed      into   pure radiation solution and solitons solution
			obtained  by  using $\overline\partial$-techniques and B\"acklund transformation  respectively.
			As a directly consequence,   the asymptotic stability of solitons for the Hirota equation is obtained.
			
		\end{abstract}
		
		\begin{keyword}
			Hirota equation; Riemann-Hilbert problem; $\overline\partial$-steepest descent method,   B\"acklund transformation; asymptotic stability.
			
			\textit{Mathematics Subject Classification:}  35P25; 35Q51; 35Q15; 35A01; 35G25.
		\end{keyword}
	\end{frontmatter}
	\tableofcontents
	
	\section{Introduction and main results}
	
	In this paper, we apply $\overline\partial$-steepest descent method and  B\"acklund transformation (BT) to obtain the asymptotic stability of solitons to the Cauchy problem of the Hirota equation,
	\begin{align}
		&iq_t+\alpha(2|q|^2q+q_{xx})+i\beta(q_{xxx}+6|q|^2q_x)=0, \label{eq:1.1}  \\
		&q(x,0)=q_0(x), \  \ (x,t)\in\mathbb{R}\times\mathbb{R}^+, \label{eq:1.2}
	\end{align}
	where $q_0\in H^1(\mathbb{R})\,\cap\,L^{2,s}(\mathbb{R}), s\in(\frac{1}{2},1]$,   the real constants $\alpha$ and $\beta$ stand for the second-order and third-order dispersions.
	The Hirota equation (\ref{eq:1.1}) is a typically mathematical physical model, which encompasses the well-known NLS equation and  derivative NLS equation \cite{1,2}.
	It is a more accurate approximation than the NLS equation in describing wave propagation in the ocean and optical fiber \cite{3,4,5}.
	
	In recent years, much work has been  done to study the various mathematical properties of the Hirota equation.
	For example, exact solutions  such as  multisoliton solutions, breather solutions, rational solutions and rogue wave solutions for  the Hirota equation
	was widely studied   \cite{8,9,10,11}.  The  $N$-soliton solutions for the  Hirota equation with non-zero boundary condition
	were constructed by using the  Riemann-Hilbert (RH)  method  \cite{14}.
	The   initial boundary value problem  for the Hirota Equation on the half line  was analyzed by using the Fokas unified method \cite{15}.
	The long time asymptotics for the Hirota equation with decaying initial data was investigated  via the nonlinear steepest descent method   \cite{16}.
	We further  found    the Painlev\'e  asymptotics for the  Hirota equation with Schwartz Cauchy
	data in the transition region \cite{XF}.
	It was shown that the Cauchy problem for the Hirota equation is  globally   well-posed   in the  space $H^s (\mathbb{R}), s\geq 1$ \cite{18}, and admits the $L^2$-conservation law $||q(t)||_{L^2}=||q_0||_{L^2}, \ t\in\mathbb{R}$.
	Further the Cauchy   problem for the Hirota equation is    well-posed   in the  space $H^s (\mathbb{R}), s>1/2$ \cite{guo2}.
	The   orbital stability of solitons for the Hirota equation in Sobolev space $H^1(\mathbb{R})$ was shown  \cite{17}.
	
	In this paper, we use the $\overline\partial$-steepest descent method and B\"acklund transformation
	to acquire the  asymptotic stability of  solitons for the  Hirota equation in  Sobolev space  $H^1(\mathbb{R})\,\cap\,L^{2,s}(\mathbb{R})$.
	The asymptotic analysis of solitons for focusing Hirota equation is necessarily more detailed than the defocusing case due to the presence of solitons which correspond to discrete spectrum of the non self-adjoint
	operator associated with (\ref{eq:1.1}).


	We are interested in asymptotic stability of solitons in the Hirota equation  (\ref{eq:1.1}) given by the explicit expressions \cite{19}
	\begin{align}\label{eq:1.3}
		q_{(\eta,\xi,\gamma,\beta,\alpha)}(x,t)&=2\eta  e^{2i(-\xi x-4\beta\xi^3t-2\alpha\xi^2t+12\beta\xi\eta^2t+2\alpha\eta^2t)+i\gamma}\nonumber\\
		&\times\mbox{sech}(-2\eta x-24\beta\eta\xi^2t+8\beta\eta^3t-8\alpha\eta\xi t),
	\end{align}
	where $\lambda=\xi+i\eta$ is a complex nonzero parameter that determines by solitons. We consider here  the question of their asymptotic stability that is when $q_0$ is close to $q_{(\eta,\xi,\gamma)}$ for
	a particular $(\eta,\xi,\gamma)$. Our   principal result  is  now stated as follows.

	\begin{theorem}  \label{th1} Let $q_0\in H^1(\mathbb{R})\,\cap\,L^{2,s}(\mathbb{R})$ for fixed $s\in (1/2,1]$ and a soliton $q_{(\eta,\xi,\gamma)}(0,
		x)$ of the Hirota equation. Then, there exist positive constants $\epsilon_0=\epsilon_0(\eta_0,\xi_0)$, $T=T(\eta_0,\xi_0)$, $C=C(\eta_0,\xi_0)$ and if
		\begin{equation}\label{eq:1.4}
			\epsilon:=||q_{(\eta_0,\xi_0,\gamma_0)}(\cdot,0)-q_0||_{H^1(\mathbb{R})\,\cap\,L^{2,s}(\mathbb{R})}<\epsilon_0.
		\end{equation}
		then there exist a state solution  $q_{(\eta_1,\xi_1,\gamma_1)}(x,t)$ such that for the solution of the Cauchy problem (\ref{eq:1.1})-(\ref{eq:1.2}), we have
		\begin{equation}
			|(\eta_1,\xi_1,\gamma_1,x)-(\eta_0,\xi_0,\gamma_0,,x_0)|<C\epsilon,
		\end{equation}
		and for all $|t|\geq T$,
		\begin{equation}
			||q(\cdot,t)-q_{(\eta_1,\xi_1,\gamma_1)}(\cdot,t)||_{L^{\infty}}<C\epsilon|t|^{-\frac{1}{2}}.
		\end{equation}
	\end{theorem}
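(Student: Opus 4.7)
The plan is to proceed by the inverse scattering transform and reduce the asymptotic stability to the long-time behavior of the pure radiation component via the $\overline\partial$-steepest descent method, using a B\"acklund transformation to isolate the soliton. First I would set up the direct scattering map $\mathcal{S}: q_0 \mapsto \{r(\lambda), \{\lambda_k, c_k\}_{k=1}^N\}$ for the Lax pair of \eqref{eq:1.1}, where $r$ is the reflection coefficient, $\{\lambda_k\}$ are the discrete eigenvalues in the upper half-plane, and $\{c_k\}$ are the associated norming constants. The single soliton $q_{(\eta_0,\xi_0,\gamma_0)}$ corresponds to one discrete eigenvalue $\lambda_0 = \xi_0 + i\eta_0$ with a specific norming constant $c_0$ encoding the phase $\gamma_0$, and reflectionless data $r \equiv 0$. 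The standard bijective and Lipschitz mapping properties of $\mathcal{S}$ on $H^1 \cap L^{2,s}$ (which would be established earlier in the paper) imply that, provided $\epsilon < \epsilon_0$ is small enough, the scattering data of $q_0$ consist of a small reflection coefficient $r$ with $\|r\|_{H^1\cap L^{2,s}} \lesssim \epsilon$, and a single discrete eigenvalue $\lambda_1 = \xi_1 + i\eta_1$ close to $\lambda_0$ with norming constant $c_1$ close to $c_0$, satisfying $|\lambda_1 - \lambda_0| + |c_1 - c_0| \lesssim \epsilon$. Unpacking $c_1$ in terms of $(\gamma_1, x_1)$ yields the first estimate of the theorem.

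Next I would formulate the RH problem $M(x,t;\lambda)$ for $q(x,t)$ and split off the soliton at $\lambda_1$. Concretely, one applies a B\"acklund (Darboux) transformation that maps the original RH problem onto one $\widetilde M$ with the same jump matrix (determined by $r$) but \emph{without} the discrete data at $\lambda_1$. This identifies
\[
q(x,t) = q_{(\eta_1,\xi_1,\gamma_1)}(x,t) + \mathcal{R}(x,t),
\]
where the remainder $\mathcal{R}$ is recovered from the soliton-free RH problem $\widetilde M$ whose only nontrivial data is the small reflection coefficient $r$. A key technical point is to check that the B\"acklund dressing is uniformly bounded in $(x,t)$ on the region of interest (with $(\eta_1,\xi_1)$ close to $(\eta_0,\xi_0)$), so that it does not destroy the smallness of the radiation part when transferring estimates between $\widetilde M$ and $M$.

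The third step is to apply the $\overline\partial$-steepest descent method to the pure-radiation RH problem $\widetilde M$ along each ray $x/t = $ const determined by the critical points of the phase $\theta(\lambda;x,t) = -i\lambda x + (4i\beta\lambda^3 + 2i\alpha\lambda^2)t$. Following the framework developed earlier in the excerpt, one deforms the jump onto contours of steepest descent, replaces the analytic extensions of $r$ by $\overline\partial$-continuations with controlled $\|\bar\partial R\|_{L^2}$ norms of order $\epsilon$, and arrives at a small-norm $\overline\partial$ problem plus a local model near the stationary phase points. The small-norm analysis gives $\mathcal{R}$ with $|\mathcal{R}(x,t)| \lesssim \epsilon\, |t|^{-1/2}$ uniformly in $x$, and the solvability constants depend only on $(\eta_0, \xi_0)$ through the geometry of the contour, yielding the final $L^\infty$ estimate.

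The principal obstacle is the interplay between the soliton and the radiation during the steepest descent deformation: the pole at $\lambda_1$ lies in the upper half-plane and must be crossed or encircled by the deformed contours, so the B\"acklund transformation has to be performed \emph{before} the deformation in such a way that all constants remain uniform as $(\eta_1,\xi_1)$ varies in a small neighborhood of $(\eta_0,\xi_0)$ and uniformly as $|t|\to\infty$. Once this uniformity is in place, combining the scattering-data Lipschitz estimate, the B\"acklund decomposition, and the $\overline\partial$-steepest descent bound gives the theorem with $C = C(\eta_0,\xi_0)$ and $T = T(\eta_0,\xi_0)$ as stated.
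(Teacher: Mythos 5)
Your overall strategy---Lipschitz scattering map, B\"acklund transformation to strip the discrete spectrum, $\overline\partial$-steepest descent on the radiation---is the same as the paper's. But there is a genuine gap in how you set up the decomposition. You write $q = q_{(\eta_1,\xi_1,\gamma_1)} + \mathcal{R}$ and assert that the remainder $\mathcal{R}$ ``is recovered from the soliton-free RH problem $\widetilde M$.'' That is not what the B\"acklund transformation gives you. The transformation (Lemma \ref{le41}) produces $q = \tilde q + \mathbf{B}$, where $\tilde q$ is the soliton-free solution (which decays like $\epsilon t^{-1/2}$ by Theorem \ref{th31}) and $\mathbf{B} = 2i(z_s - z_s^*) f_1 f_2^* / (|f_1|^2+|f_2|^2)$ is a dressing term built from the values $m_{ij}(t,x,z_s)$ of the \emph{radiation} RH solution at the interior point $z_s$. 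The identification of $\mathbf{B}$ with the modulated soliton is not automatic: it is the bulk of the proof. One must (i) obtain pointwise asymptotics of the deformed RH solution at $z=z_s$ in the upper half-plane, not merely its $z\to\infty$ coefficient used in the reconstruction formula (this is the content of Lemmas \ref{le34}, \ref{le33}, \ref{le42}); (ii) run a case analysis on whether $(x,t)$ lies near the soliton trajectory (inequalities (\ref{eq:4.7})--(\ref{eq:4.8})), showing $|\mathbf{B}|\lesssim \epsilon t^{-1/2}$ off the trajectory and $\mathbf{B} = q_{(\eta_1,\xi_1,\gamma_1)} + O(\epsilon t^{-1/2})$ on it; and (iii) recognize that the emerging soliton parameters are \emph{not} read off directly from the discrete scattering data $(z_s, c_1)$: the radiation induces a position/phase shift through the factor $\log|\nu(z_s)|$ in (\ref{eq:4.18}), with $\nu$ the partial Cauchy integral of $\log(1+|r|^2)$, and this shift is different as $t\to+\infty$ and $t\to-\infty$ (Lemma \ref{le48}). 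Your step ``unpacking $c_1$ in terms of $(\gamma_1,x_1)$'' misses this contribution; it happens to be harmless for the stated $O(\epsilon)$ bound only because $\log|\nu| = O(\|r\|_{L^2}^2) = O(\epsilon^2)$, but that must be proved (Lemma \ref{le35}).

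A second, smaller inaccuracy: the B\"acklund transformation does not leave the jump matrix unchanged. It replaces $r(z)$ by $\tilde r(z) = r(z)\frac{z-z_s}{z-z_s^*}$ (see \ref{AB}); the Blaschke factor is unimodular on $\mathbb{R}$, so the relevant norms are comparable, but the steepest-descent analysis is carried out for $\tilde r$, not $r$. With these two points repaired---in particular, supplying the interior-point asymptotics of the deformed RH solution at $z_s$ and the analysis of $\mathbf{B}$ along and off the soliton ray---your outline matches the paper's argument.
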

	
	Organization   of the paper is as follows.    In Section \ref{sec2},  we describe the inverse scattering transform   to formulate  the Cauchy problem (\ref{eq:1.1})-(\ref{eq:1.2})
 into     a RH problem \ref{RH21} (see below).   In Section \ref{sec3},
	with   the $\overline\partial$ analysis,  we obtain a   RH problem with pure radiation  solution.
 Removing this component of the solution results in a $\overline\partial$ problem which is  analyzed in Subsection \ref{dbar}. In Section  \ref{sec4}, a  B\"acklund transformation is constructed to establish  the relation  between soliton and soliton free solutions.
 We estimates the norm of the transformation in Lemma \ref{le43}, and further give  the proof of Theorem \ref{th1}.

	\section{Direct and inverse  scattering transforms} \label{sec2}

	In this section, we will analyze the spectral problem of the focusing Hirota equation
	(\ref{eq:1.1}) to obtain Jost functions and scattering matrix related to initial value $q_0$.
	
	\subsection{Notations}
	
	With regard to complex variables, given a variable $z$ or a function  $f(z)$, 	 	we denote by $z^*$ and $f^*(z)$ their respective complex conjugates;         The symbol $\overline\partial$ denoted the derivative with respect to $z^*$, i.e. if $z=x+iy$, then
	$$\overline\partial f=\frac{1}{2}(f_x+if_y).$$
	We  introduce  the Japanese bracket $\langle x \rangle=\sqrt{1+x^2}$ and  the following  normed spaces:
	A weighted   space  $L^{p,s}(\mathbb{R})$ defined with the norm
	$$\|q\|_{L^{p,s}(\mathbb{R})} := \|\langle x \rangle^s q\|_{L^{p}(\mathbb{R}) };$$
	
	A Sobolev space $W^{k,p}(\mathbb{R})$ defined with the norm
	$\|q\|_{W^{k,p}(\mathbb{R})}:=\sum_{j=0}^k \|\partial^jq\|_{L^p(\mathbb{R})},$
	where $\partial^jq$ is the $j^{th}$ weak derivative of $q$;
	
	A Sobolev space $H^k(\mathbb{R})$ defined with the norm
	$\|q\|_{H^k(\mathbb{R})}:=\|\langle x \rangle^k\mathcal{F} (q)\|_{L^2(\mathbb{R})},$
	where $\mathcal{F} (q)$ is the Fourier transform of $q$;
	
	Recall that $L^{2,s}(\mathbb{R})$ is embedded into $L^1(\mathbb{R})$ for any $s>\frac{1}{2}$. Based on this fact  we consider the potential function $q(x)\in L^1(\mathbb{R})$ for simplicity.

	\subsection{Jost functions and scattering data}
	
	The focusing Hirota equation (\ref{eq:1.1}) is just a compatibility condition  of the system of linear partial differential equation
	\begin{align}\label{eq:2.1}
		&\Psi_x=M(x,t;z)\Psi,\quad M(x,t;z)=-iz\sigma_3+Q,\\\label{eq:2.2}
		&\Psi_t=N(x,t;z)\Psi,\quad\,N(x,t;z)=-i(4\beta z^3+2\alpha z^2)\sigma_3+V,
	\end{align}
	where $\Psi=\Psi(x,t;z)$ is a 2$\times$2 matrix-valued eigenfunction, $z\in\mathbb{C}$ is a spectral parameter, and
	\begin{equation}\nonumber
		Q=\begin{pmatrix}
			0&q(x,t)\\
			-q^*(x,t)&0
		\end{pmatrix},  \ \ \sigma_3=\begin{pmatrix}
			1&0\\
			0&-1
		\end{pmatrix},
	\end{equation}
	\begin{equation}\nonumber
		V=4\beta z^2Q+zV_1+V_0,\quad V_1=2\alpha Q-2i\beta(Q_x+Q^2)\sigma_3,
	\end{equation}
	\begin{equation}\nonumber
		V_0=-i\alpha(Q_x+Q^2+q_0^2)\sigma_3+\beta[Q_x,Q]+\beta(2Q^3-Q_{xx}).
	\end{equation}
	The Lax pair  (\ref{eq:2.1})-(\ref{eq:2.2}) admits a Jost function with asymptotics
	\begin{equation}
		\Psi(x,t;z)\thicksim e^{-it\theta(z)\sigma_3},\quad |x|\to\infty,
	\end{equation}
	where
	\begin{equation}\label{eq:2.4}
		\theta=\theta(x,t;z)=z\frac{x}{t}+2\alpha z^2+4\beta z^3.
	\end{equation}
	Define  the eigenfunctions $\Phi(x,t;z)$
	\begin{equation}
		\Phi(x,t;z)=\Psi(x,t;z)e^{it\theta(z)\sigma_3},
	\end{equation}
	then $ \Phi(x,t;z)\to\mathnormal{I},\ \ |x|\to\infty $, and  we obtain a equivalent Lax pair
	\begin{align}
		&\Phi_x+iz[\sigma_3,\Phi]=Q\Phi,\\
		&\Phi_t+i(2\alpha z^2+4\beta z^3)[\sigma_3,\Phi]=V\Phi,
	\end{align}
	whose    solutions can be expressed as Volterra type integrals
	\begin{align}
		&\Phi_-(x,t;z)=\mathnormal{I}+\int_{-\infty}^xe^{-iz(x-y)\hat{\sigma_3}}Q(y,t)\Phi_-(y,t;z)dy,\\
		&\Phi_+(x,t;z)=\mathnormal{I}-\int_x^{+\infty}e^{-iz(x-y)\hat{\sigma_3}}Q(y,t)\Phi_+(y,t;z)dy,
	\end{align}
	where $e^{\hat{\sigma}_3 }A=e^{\sigma_3}Ae^{-\sigma_3}$ with $A$ is $2\times2$ matrix.

Let $\Phi_{\pm,j}(x,t;z)(j=1,2)$ denote the $j$-th column of $\Phi_{\pm}(x,t;z)$,  then it can be shown that  $\Phi_{-,1}(x,t;z)$ and $\Phi_{+,2}(x,t;z)$ are analytic in $C^+=\{z\,|\, \mbox{Im}z>0\}$ and continuous in $C^+\cup\,\mathbb{R}=\{z\,|\,\mbox{Im}z\ge0\}$, $\Phi_{+,1}(x,t;z)$ and $\Phi_{-,2}(x,t;z)$ are analytic in $C^-=\{z\,|\,\mbox{Im}z<0\}$ and continuous in $C^-\cup\mathbb{R}=\{z\,|\,\mbox{Im}z\leq0\}$.
	
	Since $\Phi_+(x,t;z)e^{-it\theta(z)\sigma_3}$ and  $\Phi_-(x,t;z)e^{-it\theta(z)\sigma_3}$ are fundamental matrix solution of the Lax pair (\ref{eq:2.1})-(\ref{eq:2.2}), there exists a scattering matrix   $S(z)=(s_{ij}(z))_{2\times2}$ satisfying
	\begin{equation}
		\Phi_-(x,t;z)=\Phi_+(x,t;z) e^{-it\theta(z)\hat\sigma_3} S(z),\quad z\in\mathbb{R}.
	\end{equation}
	The scattering coefficients can be expressed by using Wronskians
	\begin{align}\nonumber
		&s_{11}(z)=\det(\Phi_{-,1}(x,t;z)\quad \Phi_{+,2}(x,t;z)),\\\nonumber &s_{12}(z)=e^{-2it\theta(z) }\det(\Phi_{+,2}(x,t;z)\quad \Phi_{-,2}(x,t;z)),\\\nonumber
		&s_{21}(z)=e^{ 2it\theta(z } \det(\Phi_{+,1}(x,t;z)\quad \Phi_{-,1}(x,t;z)),\\\nonumber
		& s_{22}(z)=\det(\Phi_{-,2}(x,t;z)\quad \Phi_{+,1}(x,t;z)),\nonumber
	\end{align}
	where scattering data $s_{11}(z)$ is analytic in $C^+$ and continuous on $C^+\cup\mathbb{R}$ and $s_{22}(z)$ is analytic in $C^-$ and continuous on $C^-\cup\mathbb{R}$. In addition, $s_{12}(z)$ and $s_{21}(z)$ are continuous in $\mathbb{R}$. It follows the symmetric relation
	$$\Phi(x,t;z)=\sigma_2\Phi^*(x,t;z^*)\sigma_2,\quad S(z)=\sigma_2S^*(z^*)\sigma_2.$$
	We can obtain $s_{11}(z)=s_{22}^*(z^*)$ and $s_{12}(z)=-s_{21}^*(z^*)$.
	Define   reflection coefficient
	\begin{equation}\label{r}
		r(z):=\frac{s_{21}(z)}{s_{11}(z)},\,\,z\in\mathbb{R},
	\end{equation}
	then it can be shown that \cite{Beals1984,Cuccagna2013}.

	\begin{lemma}\label{le21}
		\label{la21} There exists an open dense set $\mathcal{G}\subset L^1(\mathbb{R})$ such that, for $q\in\mathcal{G}$, the scattering function $s_{11}(z)$ has at most a finite number of zeros forming a set $\mathcal{Z}_+=\{z_1,\dots,z_N\}$ in $\mathbb{C}_+$, with $s_{11}(z)\ne0$ for all $z\in\mathbb{R}$ and $s_{11}'(z_k)\ne0$ for all $k$. Denote $\mathcal{Z}_-=\{z^*_1,\dots,z^*_N\}$ and the cardinality $q\to\sharp\mathcal{Z}$ is locally constant near $q$ in $\mathcal{G}$ and the map $\mathcal{G}\ni q\to(z_1,\dots,z_N)\in\mathbb{C}_+^N$ is locally Lipschitz.
	\end{lemma}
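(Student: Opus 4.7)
The plan is to combine the analytic structure of $s_{11}(z)$ on $\overline{\mathbb{C}_+}$ with standard perturbation tools (Rouch\'e's theorem, the holomorphic implicit function theorem, and a genericity argument) to control how the zero set depends on $q$.

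First, from the Volterra integral representation of $\Phi_{\pm}$ and the Wronskian formula $s_{11}(z)=\det(\Phi_{-,1}(x,t;z),\Phi_{+,2}(x,t;z))$, the map $q\mapsto s_{11}$ is locally Lipschitz (indeed real analytic) from $L^1(\mathbb{R})$ into the Banach space of bounded continuous functions on $\overline{\mathbb{C}_+}$ that are holomorphic in $\mathbb{C}_+$ and tend to $1$ as $|z|\to\infty$. Consequently, for any $q\in L^1(\mathbb{R})$, the zeros of $s_{11}$ lie in a bounded subset of $\overline{\mathbb{C}_+}$ and can only accumulate on $\mathbb{R}$.

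Next I would define $\mathcal{G}$ as the set of $q\in L^1(\mathbb{R})$ for which $s_{11}$ has no real zeros and all zeros in $\mathbb{C}_+$ are simple. Openness of $\mathcal{G}$ follows from Rouch\'e's theorem: if $q_n\to q$ in $L^1$ then $s_{11}^{(n)}\to s_{11}$ uniformly on $\overline{\mathbb{C}_+}$, so around each simple zero $z_k$ of $s_{11}$ there is exactly one simple zero of $s_{11}^{(n)}$ for $n$ large, and the absence of real zeros persists on the bounded region that contains the zero set. The holomorphic implicit function theorem, applied to $s_{11}(z;q)=0$ with $s_{11}'(z_k)\neq 0$, then gives the local Lipschitz dependence $q\mapsto(z_1,\dots,z_N)$ together with the local constancy of $\sharp\mathcal{Z}$.

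The main obstacle is density. Given $q_0\in L^1(\mathbb{R})$ one must produce arbitrarily small $L^1$-perturbations that push any real zero of $s_{11}$ off the axis and split any multiple zero in $\mathbb{C}_+$ into simple ones. The key ingredient is an explicit formula for the G\^ateaux derivative $\partial_q s_{11}(z_*;q_0)\cdot\tilde{q}$ as a bilinear form in the Jost solutions at $z_*$; this functional is non-trivial at any candidate degenerate point, so one can choose $\tilde{q}\in L^1(\mathbb{R})$ that splits a chosen multiple root or moves a real root into $\mathbb{C}_+$. Since the zeros form a finite set in a bounded region and the exceptional conditions cut out a proper (real) analytic subvariety of any finite-dimensional local slice, a Baire/transversality argument produces an arbitrarily small correction lying in $\mathcal{G}$. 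This density step is the most delicate, as one must ensure the perturbations curing one defect do not create a new one; here one would follow the structure of the arguments of Beals-Coifman and Cuccagna cited in the statement.
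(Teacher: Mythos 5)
The paper does not actually prove this lemma: it is quoted as a known genericity result with a pointer to Beals--Coifman and Cuccagna--Pelinovsky, so there is no internal proof to compare against. Your outline of the easy half is correct and is the standard route: $q\mapsto s_{11}$ is locally Lipschitz from $L^1(\mathbb{R})$ into functions continuous on $\overline{\mathbb{C}_+}$, holomorphic in $\mathbb{C}_+$ and tending to $1$ at infinity; defining $\mathcal{G}$ by ``no real zeros and all upper-half-plane zeros simple'' automatically yields finiteness (zeros of a nonconstant holomorphic function can only accumulate on $\mathbb{R}$ or at infinity, both excluded), and Rouch\'e plus the implicit function theorem give openness, local constancy of $\sharp\mathcal{Z}$, and the Lipschitz dependence of $(z_1,\dots,z_N)$.

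The genuine gap is in your density step, and it is a circularity rather than a mere omission of detail. You write ``since the zeros form a finite set in a bounded region \dots a Baire/transversality argument produces an arbitrarily small correction,'' but for an arbitrary $q_0\in L^1(\mathbb{R})\setminus\mathcal{G}$ the zero set of $s_{11}$ need not be finite: zeros in $\mathbb{C}_+$ can accumulate at a real spectral singularity, and then you are not perturbing a finite-dimensional configuration, so the transversality argument as stated does not apply. The standard repair (essentially what Beals--Coifman do) is to first approximate $q_0$ in $L^1$ by a potential with compact support (or comparable decay), for which $s_{11}$ extends analytically across $\mathbb{R}$; since the zeros of $s_{11}$ always lie in a bounded set, analytic continuation forces them to be finite in number \emph{before} any genericity perturbation is attempted. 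Only after this reduction does your G\^ateaux-derivative/transversality argument — splitting multiple roots and moving real roots off the axis while controlling that no new defects are created — operate on a genuinely finite set of conditions. You correctly flag the density step as the delicate one, but the proposal as written skips the reduction that makes it tractable.
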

	We denote by $\mathcal{G}_N$ the open subset of $\mathcal{G}$ formed by the elements such that $\sharp\mathcal{Z}=N$. We call the potential in $\mathcal{G}$ \emph{generic}. $\mathcal{G}_1$ simply contains the soliton (\ref{eq:1.3}).  It is shown that   the solitons  under small $L^1$-perturbations are still  in $\mathcal{G}_1$   \cite{DP} .
	
	\begin{lemma} \label{le22}Let $s\in(\frac{1}{2},1]$. For $q\in H^1(\mathbb{R})\,\cap\,L^{2,s}(\mathbb{R})\cap\,\mathcal{G}$ we have $r\in H^s(\mathbb{R})\cap L^{2,1}(\mathbb{R})$. Furthermore, the map $ H^1(\mathbb{R})\,\cap\,L^{2,s}(\mathbb{R})\cap\mathcal{G}\ni q\to r\in H^s(\mathbb{R})\cap L^{2,1}(\mathbb{R})$ is locally Lipschitz.
	\end{lemma}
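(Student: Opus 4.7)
The plan is to exploit the fact that, up to a nonlinear correction, the map $q\mapsto r$ behaves like a (rescaled) Fourier transform, so that decay of $q$ in $x$ translates into regularity of $r$ in $z$ and regularity of $q$ in $x$ translates into decay of $r$ in $z$. This duality exactly matches the target spaces: $q\in L^{2,s}$ yields $r\in H^s$ and $q\in H^1$ yields $r\in L^{2,1}$.

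First I would solve the Volterra equations for $\Phi_{\pm}(x,0;z)$ by Neumann iteration. Each iterate is bounded by a power of $\|q\|_{L^1}$, so for $q\in L^1\cap L^2$ the series converges uniformly in $z\in\mathbb{R}$ and defines a locally Lipschitz map $q\mapsto\Phi_{\pm}$ with values in bounded continuous functions of $z$; the same iteration controls $\partial_z\Phi_{\pm}$. Using the Wronskian representation
$$s_{21}(z)=\det\bigl(\Phi_{+,1}(x,0;z),\,\Phi_{-,1}(x,0;z)\bigr),$$
evaluated, say, in the limit $x\to\pm\infty$, I would isolate the term linear in $q$, which is proportional to $\int_{\mathbb{R}}q^{*}(y)e^{-2izy}\,dy$, i.e.\ the Fourier transform of $q^{*}$ evaluated at $2z$. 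Plancherel immediately gives that this leading term lies in $L^{2,1}$ iff $q\in H^1$ and in $H^s$ iff $q\in L^{2,s}$. For the $L^{2,1}$ bound I would integrate by parts in $y$ to convert one factor of $z$ outside the integral into one derivative of $q$ inside. The higher-order multilinear contributions from the Neumann series carry extra powers of $\|q\|_{L^1}$ and are estimated by iterated Plancherel and Hausdorff--Young arguments; crucially, each additional $q$-factor absorbs either a derivative or a weight, so these remainders land in the same target spaces with norm controlled by $\|q\|_{H^1\cap L^{2,s}}$. A parallel computation controls $s_{11}(z)-1$ in the same norms, while Lemma~\ref{le21} guarantees $|s_{11}(z)|\ge c>0$ for $z\in\mathbb{R}$; multiplication by $1/s_{11}\in L^{\infty}$ preserves $L^{2,1}$, and it preserves $H^s$ with $s>1/2$ via the usual $H^s$-algebra / Kato--Ponce type estimate.

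The main obstacle will be the fractional regularity $H^s$ for $s\in(1/2,1)$, since one cannot simply differentiate under the integral and integrate by parts as in the $s=1$ case. I would handle this by complex interpolation between the $L^2$ endpoint (which is elementary from the Neumann bounds) and the $H^1$ endpoint (obtained under the stronger hypothesis $q\in L^{2,1}$), or alternatively by a Littlewood--Paley decomposition in $z$ paired with a dyadic decomposition of $q$ in the $x$-variable, so that each frequency block on the spectral side is fed by an appropriate weighted block of the potential. Finally, the local Lipschitz claim is obtained by running all of the above estimates on the difference of two potentials $q_1,q_2$: the Neumann series is real-analytic in $q$ and all multilinear remainders are uniformly Lipschitz on norm-bounded subsets; the required lower bound on $|s_{11}|$ persists on a neighborhood since this is an open condition by Lemma~\ref{le21}, which yields the locally Lipschitz map $q\mapsto r$.
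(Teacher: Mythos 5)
Your proposal is correct in outline and shares the paper's overall architecture: Volterra/Neumann representation of the Jost functions with resolvent bounds in powers of $\|q\|_{L^1}$, the Wronskian formulas for $s_{21}$ and $s_{11}$, the Fourier duality exchanging decay of $q$ for regularity of $r$ and vice versa, and division by $s_{11}$ justified by genericity (Lemma \ref{le21}) together with the Banach-algebra property of $H^s$ for $s>\frac12$. Where you genuinely diverge is the treatment of the fractional regularity, which is the only real technical difficulty. The paper does \emph{not} interpolate: it proves the bound $\|\Phi_{-,1}(x,\cdot+h)-\Phi_{-,1}(x,\cdot)\|_{L^2_z}\le C|h|^s\|q\|_{L^{2,s}}$ directly from the resolvent equation for the difference $\mathcal{N}(x,z)=\Phi_{-,1}(x,z+h)-\Phi_{-,1}(x,z)$, transferring the factor $|h|^s$ onto the weight $\langle y\rangle^s$ of $q$ via the elementary inequality $|e^{2i(x-y)h}-1|\le 2^{1-s}|(x-y)h|^s$, and then invokes the translation characterization of $\dot H^s$. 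This is more elementary and gives the $H^s$ bound in one pass. Your interpolation route can be made to work, but with a caveat you should make explicit: the map $q\mapsto r$ is nonlinear, so you cannot interpolate the map itself between the endpoints $q\in L^2\Rightarrow$ ($L^2$ bound) and $q\in L^{2,1}\Rightarrow$ ($H^1$ bound); you must interpolate term by term on the multilinear pieces of the Neumann series (multilinear complex interpolation or Stein's theorem for analytic families) and then resum, checking that the constants retain the $C^n\|q\|_{L^1}^{n-1}/(n-1)!$ decay needed for convergence. Your Littlewood--Paley alternative avoids this issue but is heavier machinery than the paper needs. Finally, note that the paper itself explicitly skips the proof of local Lipschitz continuity, whereas you at least sketch it via estimates on differences of potentials through the (real-analytic) Neumann series and the openness of the condition $s_{11}\ne 0$ on $\mathbb{R}$; that sketch is consistent with what the paper would require.
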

	\begin{proof}For any fixed $\kappa_0>0$ there exists a positive constant $C$ such that if $||q||_{H^1(\mathbb{R})\,\cap\,L^{2,s}(\mathbb{R})}\leq\kappa_0$
		satisfying
		\begin{align}\nonumber
			&||\Phi_{-,j}(x,\cdot )-e_j||_{H^s_z(\mathbb{R})}\leq C||q||_{L^{2,s}(\mathbb{R})},\quad \forall\,\,x\leq0,j=1,2,\\\nonumber
			&||\Phi_{+,j}(x,\cdot)-e_j||_{H^s_z(\mathbb{R})}\leq C||q||_{L^{2,s}(\mathbb{R})},\quad
			\forall\,\,x\ge0,j=1,2,\\\nonumber
			&||\Phi_{-,j}(x,\cdot)-e_j||_{L_z^{2,1}(\mathbb{R})}\leq C||q||_{H^1(\mathbb{R})},\quad
			\forall\,\,x\leq0,j=1,2,\\\label{eq:2.12}
			&||\Phi_{+,j}(x,\cdot)-e_j||_{L_z^{2,1}(\mathbb{R})}\leq C||q||_{H^1(\mathbb{R})},\quad
			\forall\,\,x\ge0,j=1,2.
		\end{align}
		We now consider the case $j=1$ and the minus sign only to prove (\ref{eq:2.12}). We based on the fact that if there is $s\in(0,1]$ such that for an $f\in L^2(\mathbb{R})$ we have
		\begin{equation}\label{eq:2.15}
			||f(\cdot+h)-f(\cdot)||_{L^2(\mathbb{R})}\leq C|h|^s,\qquad\forall h\in\mathbb{R},
		\end{equation}
		then $f\in H^s(\mathbb{R})=\dot{H}^s(\mathbb{R})\cap L^2({\mathbb{R})}$ and there is a positive constant $c$ independent of $f$ such that $||f||_{\dot{H}^s(\mathbb{R})}\leq cC,$ where  $\dot{H}^s(\mathbb{R})$ defined with $||q||_{\dot{H}^s(\mathbb{R})}:=|||x|^s\mathcal{F}{(q)}||_{L^2(\mathbb{R})}$.
		\\
		Define
		\begin{equation}
			Kf(x,z):=\int_{-\infty}^x\begin{pmatrix}
				e^{-2i(x-y)z}&0\\
				0&e^{2i(x-y)z}\end{pmatrix}Q(q(y))f(y,z)dy.
		\end{equation}
		According to the \cite{20,21} we obtain
		$$||Ke_2||_{L_x^\infty(\mathbb{R},L^2_z(\mathbb{R}))}\leq ||q||_{L^2(\mathbb{R})},$$
		hence we acquire that
		\begin{equation}\nonumber
			||(1-K)^{-1}||_{L_x^\infty((-\infty,x_0),L_z^2(\mathbb{R})\to L_x^\infty(-\infty,x_0),L_z^2(\mathbb{R}))}\leq e^{||q||_{L^1}},\quad \forall x_0\leq+\infty.
		\end{equation}
		We consider the case for $x\leq0$ we obtain
		\begin{align}\nonumber
			&(1-K)(\Phi_{-,1}(x,z)-e_1)=Ke_1=\int_{-\infty}^x\begin{pmatrix}
				0\\
				-q^*e^{2i(x-y)z}
			\end{pmatrix}dy,
		\end{align}
		and
		\begin{align}\nonumber
			||\Phi_{-,1}(x,z)-e_1||_{L^2_z(\mathbb{R})}&\leq e^{||q||_{L^1}}\Big|\Big|\int_{-\infty}^xe^{-2i(x-y)z}q^*(y)dy\Big|\Big|_{L_z^2(\mathbb{R})}\\\nonumber
			&\leq e^{||q||_{L^1}}\Big(\int_{-\infty}^x\langle y\rangle^{2s}|q(y)|^2dy\Big)^{1/2}\langle x\rangle^{-s}\\
			&\leq e^{||q||_{L^1}}||q||_{L^{2,s}(\mathbb{R})}\langle x\rangle^{-s},
		\end{align}
		and
		\begin{align}\nonumber
			||\Phi_{-,1}(x,z)-e_1||_{L_z^{2,1}(\mathbb{R})}&\leq e^{||q||_{L^1}}\Big|\Big|\int_{-\infty}^x ze^{2i(x-y)z}q^*(y)dy\Big|\Big|_{L_z^2(\mathbb{R})}\\\nonumber
			&\leq e^{||q||_{L^1}}\Big(\int_{-\infty}^x ze^{2i(x-y)z}q^*(y)+q^*(y)dy\Big|\Big|_{L_z^2(\mathbb{R})}\\
			&\leq e^{||q||_{L^1}}\sqrt{\pi}||q||_{H^{1}(\mathbb{R})}.
		\end{align}
		Next we define $\mathcal{N}(x,z):=\Phi_{-,1}(x,z+h)-\Phi_{-,1}(x,z)$ for $h\in\mathbb{R}$
		to estimate the form (\ref{eq:2.15}) by calculating the inequality  $C\lesssim ||q||_{L^{2,s}(\mathbb{R})}$, we have
		\begin{align}\nonumber
			(1-K)&\mathcal{N}(x,z)=\int_{-\infty}^x\begin{pmatrix}
				e^{-2i(x-y)(z+h)}-e^{-2i(x-y)z}&0\\
				0&e^{2i(x-y)(z+h)}-e^{2i(x-y)z}
			\end{pmatrix}\\\label{eq:2.19}
			&Q(q(y))(\Phi_{-,1}(y,z)-e_1)dy+\int_{-\infty}^x\begin{pmatrix}
				0\\
				(e^{2i(x-y)(z+h)}-e^{2i(x-y)z})q^*(y)
			\end{pmatrix}dy.
		\end{align}
		Using the Fourier transform $\mathcal{F}$ we have for $x\leq0$ the second term r.h.s (\ref{eq:2.19})
		\begin{align}\nonumber
			 \bigg\|\int_{-\infty}^x&(e^{2i(x-y)(z+h)}-e^{2i(x-y)z})q^*(y)dy \bigg\|_{L_z^2}\\\nonumber
			&=\|\mathcal{F}^*[q(\cdot+x)\chi_{\mathbb{R}_-}](z+h)-\mathcal{F}^*[q(\cdot+x)\chi_{\mathbb{R}_-}](z)||_{L_z^2}\\\nonumber
			&\leq C||\mathcal{F}^*[q(\cdot+x)\chi_{\mathbb{R}_-}](z)||_{H_z^s(\mathbb{R})}|h|^s\\
			&=||q(y+x)||_{L_y^{2,s}(\mathbb{R}_-)}|h|^s\leq ||q||_{L^{2,s}(\mathbb{R})}|h|^s,
		\end{align}
		and the first term r.h.s (\ref{eq:2.19}) is
		\begin{align}\nonumber
			\int_{-\infty}^x&\begin{pmatrix}
				e^{-2i(x-y)(z+h)}-e^{-2i(x-y)z}&0\\
				0&e^{2i(x-y)(z+h)}-e^{2i(x-y)z}
			\end{pmatrix}\\\nonumber
			&Q(q(y))(\Phi_{-,1}(y,z)-e_1)dy\\\nonumber
			&\leq 2^{(1-s)}|h|^s\int_{-\infty}^x|y|^s|q(y)||\Phi_{-,1}(y,z)-e_1||_{L_z^2(\mathbb{R})}dy\\\nonumber
			&\leq 2^{(1-s)}|h|^se^{||q||_{L^1}}||q||_{L^{2,s}(\mathbb{R})}\int_{-\infty}^x|y|^s\langle y\rangle^{-s}|q(y)| dy\\
			&\leq 2^{(1-s)}|h|^se^{||q||_{L_1}}||q||_{L^{2,s}(\mathbb{R})}||q||_{L^1}.
		\end{align}
		Then, we obtain
$$||\Phi_{-,1}(x,z+h)-\Phi_{-,1}(x,z)||_{L_z^2(\mathbb{R})}\leq C|h|^s||q||_{L^{2,s}(\mathbb{R})},  \ x\leq0,$$
 where $C$ is a fixed constant for $||q||_{H^1(\mathbb{R})\,\cap\,L^{2,s}(\mathbb{R})}\leq\kappa_0$, for a preassigned bound $\kappa_0$. This implies that for all $x\leq0$ we have $$||m_{-,1}(x,z)-e_1||_{\dot{H}_z^s(\mathbb{R})}\leq C||q||_{L^{2,s}(\mathbb{R})},$$
  for  some positive constant $C$ and above the assume have been proved.  The other cases similar.
		Now we conclude that (\ref{eq:2.12}) be established. Then $s_{12}(z)\in H^s(\mathbb{R})\,\cap\,L^{2,1}(\mathbb{R})$ because
		\begin{align}\nonumber
			s_{12}(z)&=\det(\Phi_{+,2}(0,z),\Phi_{-,2}(0,z))\\\nonumber
			&=\det(\Phi_{+,2}(0,z)-e_2,\Phi_{-,2}(0,z)-e_2)\\
			&+\det(\Phi_{+,2}(0,z)-e_2,e_2)+\det(e_2,\Phi_{-,2}(0,z)-e_2),
		\end{align}
		and we notice that $H^s(\mathbb{R})$ is a Banach algebra with respect to pointwise multipilication for any $s>\frac{1}{2}$. Similarly, $(s_{11}-1)\in H^s(\mathbb{R})\,\cap\,L^{2,1}(\mathbb{R})$ because
		\begin{align}\nonumber
			s_{11}(z)&=\det(\Phi_{-,1}(0,z),\Phi_{+,2}(0,z))\\\nonumber
			&=\det(\Phi_{-,1}(0,z)-e_1,\Phi_{+,2}(0,z))+\det(e_1,\Phi_{+,2}(0,z)-e_2)+\det(e_1,e_2)\\\nonumber
			&=1+\det(\Phi_{-,1}(0,z)-e_1,\Phi_{+,2}-e_2)\\
			&+\det(e_1,\Phi_{+,2}-e_2)+\det(\Phi_{-,1}(0,z)-e_1,e_2).
		\end{align}
		We conclude that if $q\in H^1(\mathbb{R})\,\cap\,L^{2,s}(\mathbb{R})\cap\,\mathcal{G}$ then $r\in H^s(\mathbb{R})\cap L^{2,1}(\mathbb{R})$ and this shows that we accquire a map $H^1(\mathbb{R})\,\cap\,L^{2,s}(\mathbb{R})\cap\,\mathcal{G}\ni q\to r\in H^s(\mathbb{R})\cap L^{2,1}(\mathbb{R})$.The locally Lipschitz of the map  $H^1(\mathbb{R})\,\cap\,L^{2,s}(\mathbb{R})\cap\,\mathcal{G}\ni q\to r\in H^s(\mathbb{R})\cap L^{2,1}(\mathbb{R})$ we skip.\\
	\end{proof}

	\subsection{A Riemann-Hilbert problem}
	Define the collection what we need in the space
	\begin{align}\nonumber
		\mathcal{S}(s,n):=&\{r(z)\in H^{s}(\mathbb{R})\cap L^{2,1}(\mathbb{R}),\quad (z_1,\dots,z_n)\in\mathbb{C}_+^n,\\\label{eq:2.21}
		&(c_1,\dots,c_n)\in\mathbb{C}_*^n:=\mathbb{C}^n\setminus\{0\}\},
	\end{align}
	is called the scattering data for for initial data $q_0$ and the map $\mathcal{P}:q_0\mapsto\mathcal{S}$ is called the (forward) scattering map.
	\begin{proposition}\label{p23}
	 If $r(z)\in H^s(\mathbb{R})\,\cap\,L^{2,1}(\mathbb{R})$,
	 then for every $t\in\mathbb{R}$, we have $r(z,t)=r(z)e^{i(4\beta z^3+2\alpha z^2)t}\in H^{s}(\mathbb{R})\cap L^{2,1}(\mathbb{R})$.
	 	\end{proposition}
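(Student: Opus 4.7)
The proof reduces to understanding how multiplication by the unimodular phase $e^{it\phi(z)}$, with $\phi(z):=4\beta z^{3}+2\alpha z^{2}$, interacts with the three norms $\|\cdot\|_{L^{2}}$, $\|\cdot\|_{L^{2,1}}$, and $\|\cdot\|_{H^{s}}$. The first plan is to dispatch the $L^{2,1}$ part trivially, and then import the Hölder-in-$L^{2}$ characterization already used in the proof of Lemma~\ref{le22} (equation (\ref{eq:2.15})) to handle the $H^{s}$ part.

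First I would observe that for real $z,\alpha,\beta,t$ one has $|e^{it\phi(z)}|=1$, whence $|r(z,t)|=|r(z)|$ pointwise. This immediately gives
\[
\|r(\cdot,t)\|_{L^{2}(\mathbb{R})}=\|r\|_{L^{2}(\mathbb{R})},\qquad
\|r(\cdot,t)\|_{L^{2,1}(\mathbb{R})}=\|r\|_{L^{2,1}(\mathbb{R})},
\]
so only the $H^{s}$-membership is nontrivial. By (\ref{eq:2.15}), it suffices to establish a uniform bound $\|r(\cdot+h,t)-r(\cdot,t)\|_{L^{2}(\mathbb{R})}\leq C(t)|h|^{s}$ for every $h\in\mathbb{R}$.

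I would then use the telescoping identity
\[
r(z+h,t)-r(z,t)=\bigl[r(z+h)-r(z)\bigr]e^{it\phi(z+h)}+r(z)\bigl[e^{it\phi(z+h)}-e^{it\phi(z)}\bigr].
\]
The $L^{2}_{z}$-norm of the first piece equals $\|r(\cdot+h)-r(\cdot)\|_{L^{2}}$, which is $\leq C|h|^{s}\|r\|_{H^{s}}$ because $r\in H^{s}$. For the second piece, I would combine the two elementary bounds $|e^{it\phi(z+h)}-e^{it\phi(z)}|\leq 2$ and $|e^{it\phi(z+h)}-e^{it\phi(z)}|\leq |t|\,|\phi(z+h)-\phi(z)|\leq C|t|\,|h|(1+|z|)^{2}$ (for $|h|\leq 1$), and interpolate via $|e^{ia}-e^{ib}|\leq 2^{1-s}|a-b|^{s}$.

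The main obstacle is precisely this second estimate: the derivative $\phi'(z)\sim z^{2}$ grows faster than the $L^{2,1}$-decay of $r$ can absorb if one applies the weighted bound globally. The plan is to split $\mathbb{R}_{z}=\{|z|\leq M\}\cup\{|z|>M\}$ with $M=M(h,t)$ optimized. On the inner region use the Lipschitz estimate together with $\|r\|_{L^{2}}$; on the outer region use the trivial bound $\leq 2$ together with the tail control $\int_{|z|>M}|r(z)|^{2}\,dz\leq M^{-2}\|r\|_{L^{2,1}}^{2}$. After the resulting $|h|^{s}$-Hölder estimate for $|h|\leq 1$ is in hand, the range $|h|>1$ is covered by the trivial bound $2\|r\|_{L^{2}}$, and the criterion (\ref{eq:2.15}) yields $r(\cdot,t)\in H^{s}(\mathbb{R})$ with an explicit polynomial dependence on $t$.
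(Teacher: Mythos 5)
Your treatment of the $L^{2}$ and $L^{2,1}$ norms is correct and coincides with the paper's: the phase is unimodular, so these norms are exactly preserved. For the $H^{s}$ part you take a genuinely different route from the paper, which disposes of it by a one-line formal Fourier computation (writing $\mathcal{F}(re^{i\phi t})$ as a convolution and pulling the weight through it); you instead use the difference-quotient characterization (\ref{eq:2.15}) together with the telescoping decomposition. Your route is the more honest one, but as written it does not close: the step that fails is the claim that the splitting in $M$ produces an $|h|^{s}$ bound for the term $r(z)\bigl[e^{it\phi(z+h)}-e^{it\phi(z)}\bigr]$.

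Carry out your own optimization. On $\{|z|\le M\}$ the Lipschitz bound gives at best $C|t|\,|h|\,\|\langle z\rangle^{2}r\|_{L^{2}(|z|\le M)}\le C|t|\,|h|\,(1+M)\,\|r\|_{L^{2,1}(\mathbb{R})}$, because $\phi'(z)\sim z^{2}$ while the hypothesis controls only one power of $z$ against $r$; on $\{|z|>M\}$ the tail bound is $2M^{-1}\|r\|_{L^{2,1}(\mathbb{R})}$. Balancing $|t|\,|h|\,M$ against $M^{-1}$ forces $M=(|t|\,|h|)^{-1/2}$ and a total of order $(|t|\,|h|)^{1/2}$; the interpolated variant $|e^{ia}-e^{ib}|\le 2^{1-s}|a-b|^{s}$ leads to $|t|^{s}|h|^{s}M^{2s-1}+M^{-1}$, whose optimum is again $(|t|\,|h|)^{1/2}$. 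So the modulus of continuity you actually obtain is $|h|^{1/2}$, which for $|h|\le 1$ and $s>\tfrac12$ is strictly weaker than $|h|^{s}$, and criterion (\ref{eq:2.15}) then yields only $r(\cdot,t)\in H^{1/2}(\mathbb{R})$, not $H^{s}(\mathbb{R})$. The obstruction is structural rather than a matter of sharper bookkeeping: since $\phi'(z)\sim z^{2}$, controlling the phase term in $\dot H^{s}$ requires $\|\,|z|^{2s}r\|_{L^{2}(\mathbb{R})}<\infty$, i.e. $r\in L^{2,2s}(\mathbb{R})$ with $2s>1$, which $r\in L^{2,1}(\mathbb{R})$ does not supply. (The paper's own computation silently commutes the weight $\langle z\rangle^{s}$ with the convolution and so does not resolve this difficulty either; some additional decay of $r$, or a weaker target space, is needed to make the $H^{s}$ assertion rigorous for $s>\tfrac12$.)
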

	 \begin{proof}By (\ref{r}), we obtain
	 	\begin{align*}
	 	&	\|r(\cdot,t)\|_{L^{2,1}(\mathbb{R})}=\|z r(z)e^{i(4\beta z^3+2\alpha z^2)t}\|_{L^2(\mathbb{R})}\\
	 	&\qquad\qquad\qquad=\| z r(z)\|_{L^2(\mathbb{R})}=||r(z)||_{L^{2,1}(\mathbb{R})},\\
	 and
	 \\
	 	&\|r(\cdot,t)\|_{H^s(\mathbb{R})}=\| z^s\mathcal{F}(r(z)e^{i(4\beta z^3+2\alpha z^2)t})\|_{L^2(\mathbb{R})}\\
	 	&\qquad\qquad\qquad=(2\pi)^{-1}\| z^s\mathcal{F}(r(z))*\mathcal{F}(e^{i(4\beta z^3+2\alpha z^2)t})||_{L^2(\mathbb{R})}\\
	 	&\qquad\qquad\qquad=(2\pi)^{-1}\|r(z)\|_{H^{s}(\mathbb{R})}.
	 \end{align*}
	 Therefore, we completed the proof of Proposition \ref{p23}.
	 \end{proof}

	The essential fact of integrability is that if the potential $q_0$ evolves according to (\ref{eq:1.1}) the the time evolution of the scattering data $\mathcal{D}$ is trivial and the collection
	\begin{align}\nonumber
		\mathcal{D}(t):=&\{r(z,t) \in H^{s}(\mathbb{R})\cap L^{2,1}(\mathbb{R}),\quad (z_1,\dots,z_n)\in\mathbb{C}_+^n,\\
		& (c_1e^{i(4\beta z_1^3+2\alpha z_1^2)t},\dots,c_ne^{i(4\beta z_n^3+2\alpha z_n^2)t})\in\mathbb{C}^n\setminus\{0\}\}.
	\end{align}
	The inverse scattering map $\mathcal{P}^{-1}:\mathcal{D}(r)\mapsto q(x,t)$ seeks to recover the solution of (\ref{eq:1.1}) from its scattering data.
	
	We construct the function
	\begin{equation}
		M(z):=\begin{cases}
			\big(\frac{\Phi_{-,1}(x,t;z)}{s_{11}(z)},\Phi_{+,2}(x,t;z)\big),\quad z\in\mathbb{C}^+,
			\\
			\\
			(\Phi_{+,1}(x,t;z),\frac{\Phi_{-,2}(x,t;z)}{s_{22}(z)}\big),\quad\, z\in\mathbb{C}^-,
		\end{cases}
	\end{equation}
	which satisfies  the following RH problem.
	
	\begin{problem}\label{RH21}
		Find an analytic function $M(z):\mathbb{C}\setminus(\mathbb{R}\,\cup\,\mathcal{Z}\,\cup\,\mathcal{Z}^*)\to SL_2(\mathbb{C})$ with the following properties
		
		\begin{enumerate}
			\item $ M(z)=I+\mathcal{O}(z^{-1})$ as $z\to\infty$.
			\item $M(z)$ takes continuous boundary values $M_{\pm}(z)$ which satisfy the jump relation
			$$M_+(z)=M_-(z)V(z),$$
			where
			\begin{equation}
				V(z)=\begin{pmatrix}
					1+|r(z)|^2&r^*(z)e^{-2it\theta(z)}\\
					r(z)e^{2it\theta(z)}&1
				\end{pmatrix}.
			\end{equation}
			\item $M(z)$ has simple poles at each $\mathcal{Z}:=\mathcal{Z}_+\cup\mathcal{Z}_- $ at which
			\begin{align}
				&\underset{z=z_k}{\mbox{Res}} M(z)=\underset{z\to z_k}{\mbox{lim}}M(z)\begin{pmatrix}
					0&0\\
					c_ke^{-2it\theta(z)}&0
				\end{pmatrix},\\
				&\underset{z=z_k^*}{\mbox{Res}} M(z)=\underset{z\to z_k^*}{\mbox{lim}}M(z)\begin{pmatrix}
					0&-c_k^*e^{2it\theta(z^*)}\\
					0&0
				\end{pmatrix}.
		\end{align}\end{enumerate}
	\end{problem}
	
	It's a simple consequence of Liouville's theorem that if a solution exist, it is unique. The existence of solutions of RHP 2.1 for any $(x,t)\in\mathbb{R}\times\mathbb{R}$ follows by means of Zhou's vanishing lemma argument \cite{22}. Expanding this solution as
	$$M(z)=I+\frac{M^{(1)}(x,t)}{z}+\mathcal{O}(z^{-2}), \ z\to\infty, $$
	and one can find the solution
	\begin{equation}
		M(x,z)=I-\underset{\zeta\in\mathcal{Z}}{\sum}\frac{M_x(\zeta)V(\zeta)}{\zeta-z}+\frac{1}{2\pi i}\int_\mathbb{R}\frac{M_x(V(\zeta)-I)}{\zeta-z}d\zeta,
	\end{equation}
	where $M_x(z)$ is defined for $z\in\mathbb{R}\cup\mathcal{Z}$ in the space $M_{2\times2}(\mathbb{C})$ of complex $2\times2$ matrices and satisfies system (\ref{eq:2.29}) and (\ref{eq:2.30}) written below
	and it follows that the solution of (\ref{eq:1.1}) is given by
	\begin{equation}\label{eq:2.28}
		q(x,t)=2i\underset{z\to\infty}{\mbox{lim}}(zM(x,t;z))_{12}=2i(M^{(1)})_{12}.
	\end{equation}

	\begin{lemma}\label{le23} Fix $s\in(\frac{1}{2},1]$ and suppose that $r\in H^s(\mathbb{R})\cap L^{2,1}(\mathbb{R})$. Then, for any $x\in\mathbb{R}$ there exists and unique a solution $M_x:\mathbb{R}\cup\mathcal{Z}\to\mbox{M}_{2\times2}(\mathbb{C})$ of the following system of integral and algebraic equation:
		\begin{equation}\label{eq:2.29}
			M_x(z)=I-\underset{\zeta\in\mathcal{Z}}{\sum}\frac{M_x(\zeta)V(\zeta)}{\zeta-z}+\underset{\epsilon\to0}{\mbox{lim}}\int_{\mathbb{R}}\frac{M_x(\zeta)(V(\zeta)-I)}{\zeta-(z-i\epsilon)}d\zeta,\quad z\in\mathbb{R},
		\end{equation}
		and
		\begin{equation}\label{eq:2.30}
			M_x(z)=I-\underset{\zeta\in\mathcal{Z}\setminus\{z\}}{\sum}\frac{M_x(\zeta)V(\zeta)}{\zeta-z}+\frac{1}{2\pi i}\int_{\mathbb{R}}\frac{M_x(\zeta)(V(\zeta)-I)}{\zeta-z}d\zeta,\quad z\in\mathcal{Z},
		\end{equation}
		such that $(M_x(z)-I)\in L_z^2(\mathbb{R})$.
	\end{lemma}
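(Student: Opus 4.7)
The plan is to recast (\ref{eq:2.29})--(\ref{eq:2.30}) as a single bounded linear operator equation $(I-\mathcal{T})(\mu,\mathbf{m})=(F_1,F_2)$ on the Hilbert space $\mathcal{H}:=L^2(\mathbb{R};M_{2\times 2}(\mathbb{C}))\oplus M_{2\times 2}(\mathbb{C})^{2N}$, and then to invoke the Fredholm alternative, closing uniqueness by Zhou's vanishing lemma \cite{22}. A pair $(\mu,\mathbf{m})$ with $\mathbf{m}=(m_\zeta)_{\zeta\in\mathcal{Z}}$ encodes a candidate solution through $M_x(z)=I+\mu(z)$ for $z\in\mathbb{R}$ and $M_x(\zeta)=m_\zeta$ at the poles. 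Using the $L^2$-bounded minus-side Cauchy projection
\begin{equation*}
C_-f(z):=\lim_{\epsilon\downarrow 0}\frac{1}{2\pi i}\int_{\mathbb{R}}\frac{f(\zeta)}{\zeta-(z-i\epsilon)}\,d\zeta,
\end{equation*}
the operator $\mathcal{T}$ decomposes as the sum of the multiplication/projection operator $\mu\mapsto C_-(\mu\cdot(V-I))$, a finite-rank piece $\mathbf{m}\mapsto-\sum_{\zeta}(\zeta-z)^{-1}m_\zeta V(\zeta)$ mapping into $L^2(\mathbb{R})$, and the finite-dimensional algebraic coupling dictated by (\ref{eq:2.30}).

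\textbf{Boundedness and Fredholm property.} By Lemma \ref{le22} together with the Sobolev embedding $H^s(\mathbb{R})\hookrightarrow L^\infty(\mathbb{R})$ for $s>1/2$, every entry of $V-I$ lies in $L^2\cap L^\infty$; hence multiplication by $V-I$ preserves $L^2(\mathbb{R};M_{2\times 2})$ and $C_-\circ(V-I)\cdot$ is bounded on it. Because $\zeta\in\mathcal{Z}\subset\mathbb{C}\setminus\mathbb{R}$, each kernel $(\zeta-z)^{-1}$ belongs to $L^2(\mathbb{R})$, so the residue-sum block is finite rank; the algebraic block is finite-dimensional. Thus $\mathcal{T}$ is bounded on $\mathcal{H}$, and a standard compact-perturbation argument for Cauchy-type integral operators shows that $I-\mathcal{T}$ is Fredholm of index zero; the inhomogeneity $(F_1,F_2)$ also lies in $\mathcal{H}$ for the same reason.

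\textbf{Main obstacle: triviality of the kernel.} Given $(\mu,\mathbf{m})\in\ker(I-\mathcal{T})$, I would form
\begin{equation*}
\Xi(z):=-\sum_{\zeta\in\mathcal{Z}}\frac{m_\zeta V(\zeta)}{\zeta-z}+\frac{1}{2\pi i}\int_{\mathbb{R}}\frac{\mu(\zeta)(V(\zeta)-I)}{\zeta-z}\,d\zeta,\qquad z\in\mathbb{C}\setminus(\mathbb{R}\cup\mathcal{Z}),
\end{equation*}
which by construction vanishes at $\infty$, satisfies the jump $\Xi_+=\Xi_-V$ across $\mathbb{R}$ with $V$ as in RH problem \ref{RH21}, and exhibits exactly the residue structure prescribed at $\mathcal{Z}_\pm$. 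Since
\begin{equation*}
V(z)=\begin{pmatrix}1+|r(z)|^2 & r^*(z)e^{-2it\theta(z)}\\ r(z)e^{2it\theta(z)} & 1\end{pmatrix}
\end{equation*}
satisfies $V(z)+V(z)^{\dagger}>0$ on $\mathbb{R}$, and the discrete data at $\mathcal{Z}_+$ and $\mathcal{Z}_-$ are Schwarz-conjugate, Zhou's vanishing lemma \cite{22} forces $\Xi\equiv 0$, hence $\mu\equiv 0$ and $\mathbf{m}=0$. By the Fredholm alternative $(I-\mathcal{T})$ is invertible, producing a unique $M_x$ with $M_x-I\in L^2_z(\mathbb{R})$, as asserted. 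The genuine technical core is the vanishing lemma: it is what bridges the purely functional-analytic Fredholm setup on $\mathcal{H}$ and the complex-analytic non-degeneracy of the focusing RH problem with discrete spectrum.
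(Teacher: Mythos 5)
The paper does not actually prove this lemma: the \verb|proof| environment following the statement is empty, and the text immediately afterwards simply attributes the result to Zhou's $L^2$-Sobolev bijectivity paper \cite{23}. Your proposal therefore supplies an argument where the paper supplies only a citation, and the argument you give is the standard one underlying that reference: recast the coupled integral--algebraic system as $(I-\mathcal{T})(\mu,\mathbf{m})=(F_1,F_2)$ on $L^2\oplus\mathbb{C}^{4N\cdot 4}$, check boundedness from $r\in H^s\cap L^{2,1}$ with $s>1/2$ (so $V-I\in L^2\cap L^\infty$ and the pole kernels $(\zeta-z)^{-1}$ are in $L^2_z$), establish Fredholmness of index zero, and kill the kernel with Zhou's vanishing lemma, which applies here because $V$ is Hermitian positive definite on $\mathbb{R}$ ($\det V=1$, $\operatorname{tr}V=2+|r|^2$) and the residue matrices at $\mathcal{Z}_+$ and $\mathcal{Z}_-$ are nilpotent and Schwarz-conjugate. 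The identification of a kernel element with a homogeneous solution of RH problem \ref{RH21} via the Cauchy integral $\Xi$ is correct, including the point that nilpotency of the residue matrix makes $\lim_{z\to\zeta}\Xi(z)V(\zeta)=m_\zeta V(\zeta)$.

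The one soft spot is the phrase ``a standard compact-perturbation argument'': the Beals--Coifman operator $\mu\mapsto C_-(\mu(V-I))$ is a singular integral operator and is \emph{not} compact, so $I-\mathcal{T}$ is not literally identity plus compact. The Fredholm property and the index-zero computation are obtained in Zhou's work by constructing an explicit parametrix from a rational approximation (equivalently, an analytic factorization) of the jump matrix, together with a deformation argument for the index. Since you are entitled to cite that as a known result for jump matrices of this class, this is a gloss rather than a gap, but as written the justification offered for the Fredholm step is the wrong mechanism. Everything else in your outline is sound and is, in substance, the proof the paper is implicitly outsourcing to \cite{23} and \cite{Cuccagna2013}.
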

	\begin{proof}
	\end{proof}
	The result of the  Lemma \ref{le23} implies that the map $\mathcal{G}_N\cap H^1(\mathbb{R})\,\cap\,L^{2,s}(\mathbb{R})\to\mathcal{S}(s,n)$ is one-to-one. The result is due to Zhou \cite{23}, we just the completeness of it.
Now we just consider the case of pure radiation solutions of the Hirota equation with $N=0$. And the Lemma \ref{le24} establish the fact that the map $\mathcal{G}_0\cap H^1(\mathbb{R})\,\cap\,L^{2,s}(\mathbb{R})\to\mathcal{S}(s,0)$ is only one-to-one but also onto.
	\begin{lemma}\label{le24}
		Let $r\in H^s(\mathbb{R})\cap L^{2,1}(\mathbb{R})$, $\mathcal{Z}=\varnothing$ and consider the potential $q$ defined by the reconstructing formula (\ref{eq:2.28}). Then $q\in H^1(\mathbb{R})\,\cap\,L^{2,s}(\mathbb{R})$. Furthermore, for any positive $\kappa_0$, there is a constant $C$ such that for $||r||_{L^\infty(\mathbb{R})}\leq\kappa_0$, we have
$$||q||_{H^1(\mathbb{R})\,\cap\,L^{2,s}(\mathbb{R})}\leq C||r||_{H^{s}(\mathbb{R})\,\cap\, L^{2,1}(\mathbb{R})}.$$
	\end{lemma}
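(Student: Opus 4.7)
The plan is to recover $q$ from $r$ via the Beals--Coifman integral-equation framework and to transfer Fourier-type estimates from the reflection coefficient side to the potential side. With $\mathcal{Z}=\varnothing$ the representation in (\ref{eq:2.28}) collapses to
\begin{equation*}
q(x,t)=-\frac{1}{\pi}\int_{\mathbb{R}}(M_x(\zeta))_{11}\,r^*(\zeta)\,e^{-2it\theta(\zeta)}\,d\zeta,
\end{equation*}
where $M_x$ solves (\ref{eq:2.29}). First I would factor the jump matrix as $V=(I-w_-)^{-1}(I+w_+)$ with off-diagonal triangular $w_\pm$ carrying the entries $\pm r^{(*)}e^{\mp 2it\theta}$, introduce the Beals--Coifman operator $C_w\mu:=C_+(\mu w_-)+C_-(\mu w_+)$, and rewrite (\ref{eq:2.29}) as the singular integral equation $(I-C_w)(\mu-I)=C_w I$. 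Boundedness of the Cauchy projections on $L^2(\mathbb{R})$ yields $\|C_w\|_{L^2\to L^2}\lesssim\|r\|_{L^\infty}$, while Zhou's vanishing-lemma argument already invoked in Lemma \ref{le23} guarantees that $I-C_w$ is invertible on $L^2(\mathbb{R})$ with operator norm bounded by a quantity depending only on $\kappa_0$; hence $\|\mu-I\|_{L^2}\le C(\kappa_0)\|r\|_{L^2}$.

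Next I would split $q=q_{\mathrm{lin}}+q_{\mathrm{nl}}$, where $q_{\mathrm{lin}}$ arises by replacing $(M_x)_{11}$ by $1$. Since $e^{-2it\theta(\zeta)}=e^{-2i\zeta x}\cdot e^{-2it(2\alpha\zeta^2+4\beta\zeta^3)}$ and the second factor is unimodular, $q_{\mathrm{lin}}(\cdot,t)$ is, up to complex conjugation and a unitary Fourier multiplier, the inverse Fourier transform of $r$. Plancherel combined with the elementary dualities $\mathcal{F}^{-1}:L^{2,1}_\zeta\to H^1_x$ and $\mathcal{F}^{-1}:H^s_\zeta\to L^{2,s}_x$ then yields
\begin{equation*}
\|q_{\mathrm{lin}}\|_{L^2}\lesssim\|r\|_{L^2},\quad \|\partial_x q_{\mathrm{lin}}\|_{L^2}\lesssim\|r\|_{L^{2,1}},\quad \|\langle x\rangle^s q_{\mathrm{lin}}\|_{L^2}\lesssim\|r\|_{H^s}.
\end{equation*}
The nonlinear remainder $q_{\mathrm{nl}}=-\pi^{-1}\int(\mu-I)_{11}\,r^* e^{-2it\theta}\,d\zeta$ is first controlled in $L^2_x$ by Cauchy--Schwarz together with the a priori bound on $\mu-I$; for $\partial_x q_{\mathrm{nl}}$ I would integrate by parts in $\zeta$ (no boundary term since $\mu-I$ and $r$ lie in $L^2$), trading the $x$-derivative for a factor of $\zeta$ against $r$ and producing $\|r\|_{L^{2,1}}$ times a constant depending on $\kappa_0$.

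The main technical obstacle will be the weighted estimate $\|\langle x\rangle^s q\|_{L^2}\lesssim C\|r\|_{H^s\cap L^{2,1}}$ for fractional $s\in(\tfrac{1}{2},1]$. Following the blueprint of Lemma \ref{le22}, I would pass through the equivalent H\"older-type characterization $\|\widehat{q}(\cdot+h)-\widehat{q}(\cdot)\|_{L^2}\lesssim|h|^s$ on the Fourier side and estimate the translated integrand, which forces one to bound the sensitivity of the Beals--Coifman operator to translation in $\zeta$, namely $\|C_{w(\cdot+h)}-C_{w(\cdot)}\|_{L^2\to L^2}$. This in turn rests on the Banach-algebra property of $H^s(\mathbb{R})$ for $s>\tfrac{1}{2}$ and on the boundedness of $C_\pm$ as Fourier multipliers on $H^s$. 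Assembling these ingredients, the triangle inequality and the a priori $L^2$-control of $\mu-I$ close the bound; the local Lipschitz dependence of $r\mapsto q$ then follows by applying the same argument to the difference of two solutions.
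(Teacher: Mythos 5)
Your overall framework --- the Beals--Coifman singular integral equation $(I-C_w)(\mu-I)=C_wI$, the splitting of $q$ into the Fourier transform of $r$ plus a nonlinear remainder, and the resolvent bound on $(I-C_w)^{-1}$ depending only on $\|r\|_{L^\infty}$ --- is the same as the paper's, and the linear part and the $L^2$ estimates are handled correctly. The genuine gap is that you work with a \emph{single} triangular factorization $V=(I-w_-)^{-1}(I+w_+)$ on all of $\mathbb{R}$. The decay in $x$ of the nonlinear remainder, which is exactly what the weighted bound $\|\langle x\rangle^s q\|_{L^2}$ requires, comes from oscillatory estimates of the form $\|C^{\pm}\bigl(r^{(*)}e^{\mp 2i\zeta x}\bigr)\|_{L^2_\zeta}\lesssim\langle x\rangle^{-s}\|r\|_{H^s(\mathbb{R})\cap L^{2,1}(\mathbb{R})}$, and for a fixed factorization these hold only for one sign of $x$: the Cauchy projection $C^+$ (resp.\ $C^-$) gains decay only when the exponential it acts on extends boundedly into the upper (resp.\ lower) half-plane. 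Consequently $\|C_wI\|_{L^2}\lesssim\langle x\rangle^{-s}$, hence $\|\mu_x-I\|_{L^2}\lesssim\langle x\rangle^{-s}$, and hence your bound on $q_{\mathrm{nl}}$ cannot close on the other half-line. The paper resolves this by invoking the second factorization $V=U_L\,(1+|r|^2)^{\sigma_3}\,U_R$, removing the diagonal factor with the scalar function $\delta$ solving (\ref{eq:3.5}) through the conjugation $M^{(1)}=M\delta^{-\sigma_3}$, and rerunning the argument for the modified coefficient $\tilde r=r\delta_+\delta_-$; none of this appears in your proposal.

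Your suggested substitute for the weighted estimate --- a H\"older/translation characterization of $\widehat{q}$ together with bounds on $\|C_{w(\cdot+h)}-C_{w(\cdot)}\|_{L^2\to L^2}$ --- does not repair this, because $q$ is not a pure Fourier transform of $r$: the $x$-dependence enters through both the phase $e^{-2i\zeta x}$ and the solution $\mu_x$ of the integral equation, so translating in $\zeta$ does not isolate the weight $\langle x\rangle^s$, and the direction-dependence of the Cauchy projections reappears at exactly the same point. A secondary, smaller gap: for $\partial_x q_{\mathrm{nl}}$ you need control of $\partial_x(\mu_x-I)$ in $L^2_\zeta$ in addition to the factor of $\zeta$ produced by differentiating the exponential; the paper delegates precisely this to Theorem 3.6 of Deift--Zhou, whereas your integration by parts in $\zeta$ does not address it.
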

	\begin{proof} We now construct the inverse scattering transform from $H^{s}(\mathbb{R})\,\cap\, L^{2,1}(\mathbb{R})$ to $H^1(\mathbb{R})\,\cap\,L^{2,s}(\mathbb{R})$. Let $r\in H^{s}(\mathbb{R})\,\cap\, L^{2,1}(\mathbb{R})$ be given. The inverse problem is formulated as RH Problem \ref{RH21}.

		We factorize the jump matrix  $V(z)$ on the line $ z\in(-\infty,z_1)\cup(z_2,+\infty)$ where
		\begin{equation}
			V(z)=V_-^{-1}(z)V_+(z)=
			\begin{pmatrix}
				1&r^*(z)e^{-2it\theta(z)}\\
				0&1
			\end{pmatrix}\begin{pmatrix}
				1&0\\
				r(z)e^{2it\theta}&1\end{pmatrix}.\quad\end{equation}
		This problem is adapted for studying the decay behavior of $q$ as $x\to-\infty$. Next, we prove the decay behavior of $q$ as $x\to-\infty$. The decay behavior of $q$ as $x\to\infty$ can be obtained in a similar manner.	
			
		Let
$$C_{w_x}h:=C^+(hw_{x-})+C^-(hw_{x+}),$$
where   $w_{x\pm}:=\pm(V_{\pm}-I).$
 Then we consider a function $\mu\in I+L^2(\mathbb{R})$ such that
		\begin{equation}
			(I-C_w)(\mu_x)(z)=I.
		\end{equation}
		We acquire the potential $q(x,t)$ in (\ref{eq:2.28}) by $w(\zeta):=V_+(\zeta)-V_-(\zeta)$ in the case of $c_j=0$ which can be expressed also as
		\begin{equation}
			M(x,z)=I+C^{\pm}(\mu_x(\zeta)w(\zeta))=I+\frac{1}{2\pi i}\int_{\mathbb{R}}\frac{\mu_x(\zeta)w(\zeta)}{\zeta-z}d\zeta.
		\end{equation}
	Then differentiating the jump relation $M_+(z)=M_-(z)V(z)$, we obtain
	\begin{equation*}
		\frac{d}{dx}M_++iz[M_+,\sigma_3]=(\frac{d}{dx}M_-+iz[M_-,\sigma_3])V.
	\end{equation*}
		A simple calculation shows that
$$iz[M_{\pm},\sigma_3]=Q+C^{\pm}(i[\mu_xw,\sigma_3]).$$
 Thus $M_{\pm}$ solve the differential equation $\frac{d}{dx}M_{\pm}=iz[\sigma_3,M_{\pm}]+QM_{\pm}$. Set $\mathcal{I}(r)=q$. The following results show that $\mathcal{I}$ maps  $H^{s}(\mathbb{R})\,\cap\, L^{2,1}(\mathbb{R})$ to $H^1(\mathbb{R})\,\cap\,L^{2,s}(\mathbb{R})$.
		For a fixed $c_s$ and $x\leq0$, using the argument in Lemma 3.4 \cite{20} we obtain
		\begin{equation}
			||C^{\pm}w(z)||_{L_z^2(\mathbb{R})}\leq c_s\langle x\rangle^{-s}||r||_{H^s(\mathbb{R})\,\cap\,L^{2,1}(\mathbb{R})},
		\end{equation}
	 which can be induced by the different definition of the operator in (\ref{eq:2.1}) from which we get
		\begin{equation}
			||C_wI||_{L^2(\mathbb{R})}\leq2c_s\langle x\rangle^{-s}||r||_{H^s(\mathbb{R})\,\cap\,L^{2,s}(\mathbb{R})}.
		\end{equation}
		Then we take into account the inequality
		\begin{equation}
			\mu_x-I=(I-C_w)^{-1}C_wI,
		\end{equation}
		and correspondingly
		\begin{equation}
			||\mu_x-I||_{L_z^2}\leq||(I-C_w)^{-1}||_{L_z^2\to L_z^2}||C_wI||_{L_z^2}.
		\end{equation}
		For fixed $c$ and from the Lemma 5.1 \cite{24} we obtain $||(I-C_w)^{-1}||_{L_z^2\to L_z^2}\leq C\langle\rho\rangle^2$ where $\rho:=||r||_{L^\infty(\mathbb{R})}$. We conclude that for $x\leq0$ and for any $\kappa_0$ there is a constant $C$ such that
		\begin{equation}
			||\mu_x-I||_{L_z^2}\leq C\langle x\rangle^{-s}||r||_{H^s(\mathbb{R})\,\cap\,L^{2,s}(\mathbb{R})},
		\end{equation}
		for $\rho\leq\kappa_0$.
		As above, we write
		\begin{equation*}
		M-I	=\int_{\mathbb{R}}((I-C_w)^{-1}(V_+-V_-))dz=\int_1+\int_2+\int_3,
		\end{equation*}
		where
		\begin{equation*}
			\int_1=\int(V_+-V_-),\qquad\qquad\int_2=\int(C_wI)(V_+-V_-),
		\end{equation*}	
		\begin{equation*}
			\int_3=\int(C_w)(I-C_w)^{-1}C_wI)(V_+-V_-)=\int(C_w)(\mu-I)(V_+-V_-).
		\end{equation*}
		We remark that for calculating $q$, the estimate of $\int_2$ is not needed because it is diagonal and $\hat{\sigma_3}\int_2=0$. But the estimate is useful for other problems. Clearly, $\int(V_+-V_-)\in H^s(\mathbb{R})\cap L^{2,1}(\mathbb{R})$ by the Fourier transform. Using the triangularity of $V_{\pm}$, the fact that $C^+-C^-=1$ Cauchy's theorem, using Lemma 3.4 \cite{20} we obtain for some $c>0$
		\begin{align*}
			|\int_2|&=|\int[(C^+(I-V_-))(V_+-I)+(C^-(V_+-I)(I-V_-))]\\
			&=|\int[C^+(I-V_-))(C^-(I-V_+))+(C^-(V_+-I))(C^+(I-V_-))]\\
			&\leq c(1+x^2)^{-1},
			\end{align*}
	and
	\begin{equation*}
		||\mu-I||_{L^2}=||(I-C_w)^{-1}C_wI||_{L^2}\leq c(1+x^2)^{-1/2},
	\end{equation*}
		we have
		\begin{align*}
			|\int_3|&=|\int[(C^+(\mu-I)(I-V_-)(V_+-I))]+(C^-(\mu-I)(V_+-I)(I-V_-))\\
		            &\leq c(1+x^2)^{-1}.
		\end{align*}
According to Theorem 3.6 \cite{20}, we  have $q \in  H^1(\mathbb{R})\,\cap\,L^{2,s}(\mathbb{R}),$
moreover,
 $$||q||_{H^1(\mathbb{R})\,\cap\,L^{2,s}(\mathbb{R})}\leq C||r||_{H^s(\mathbb{R})\,\cap\, L^{2,1}(\mathbb{R})}.$$
		\\
		For the case  when  $z\in (z_1,z_2)$,  we consider   the second  decomposition
		\begin{equation}
			V=   \begin{pmatrix}
				1&0\\
				\frac{r(z)e^{2it\theta}}{1+|r(z)|^2}&1
			\end{pmatrix}
			\begin{pmatrix}
				1+|r(z)|^2&0\\
				0&\frac{1}{1+|r(z)|^2}
			\end{pmatrix}
			\begin{pmatrix}
				1&\frac{r^*(z)e^{-2it\theta}}{1+|r(z)|^2}\\
				0&1
			\end{pmatrix}. \nonumber
		\end{equation}
	Further the RH problem \ref{RH21} can be  changed into  the RH problem \ref{RH31}	
		by using transformation $M^{(1)}=M\delta^{-\sigma_3}$.  Correspondingly we acquire estimates
$$||\tilde{q}||_{H^1(\mathbb{R})\,\cap\,L^{2,s}(\mathbb{R})}\leq C||\tilde{r}||_{H^s(\mathbb{R})\cap L^{2,1}(\mathbb{R})}\\\leq c||r||_{H^s(\mathbb{R})\cap L^{2,1}(\mathbb{R})}, $$
 for the function $\tilde{q}=  \mathcal{I}(\tilde{r}) $  with  $\tilde{r}:=r\delta_+\delta_-$ and for fixed $c$ when $\rho\leq\kappa_0$ by proceeding as above. Finally, $\tilde{q}=q$ and more details see \cite{20}.
	\end{proof}
	


	\section{\noindent\textbf\expandafter{Dispersion for pure radiation solutions}	}\label{sec3}

	  In this section, we consider the elements of $\mathcal{G}$ such that $\mathcal{Z}=\varnothing$ generate pure radiation solutions of the Hirota equation.

\subsection{A regular RH problem }

From   the function (\ref{eq:2.4}),  we get two stationary points
	\begin{align}
		z_1=\frac{-\alpha-\sqrt{\alpha^2-3\beta x/t}}{6\beta},\\
		z_2=\frac{-\alpha+\sqrt{\alpha^2-3\beta x/t}}{6\beta}.
	\end{align}
	The signature table of Re$(it\theta(z))$ is given in Figure \ref{F1}.

	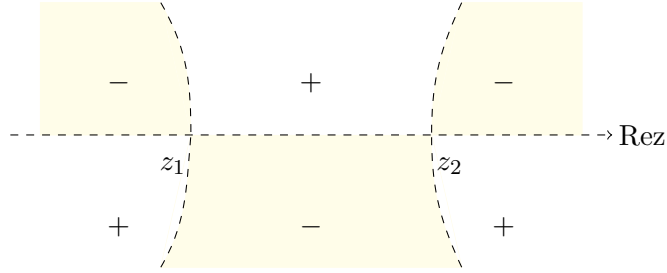
\begin{figure}
		\begin{center}
			\begin{tikzpicture} [scale=0.8]
				\draw [yellow!10, fill=yellow!10] (2,0)--(4.5,0)--(4.5,2.2)--(2.5,2.2);
				\draw [yellow!10, fill=yellow!10] (-2,0)--(-4.5,0)--(-4.5,2.2)--(-2.5,2.2);
				\draw [yellow!10, fill=yellow!10] (-2,0)--(-2.5,-2.2)--(2.5,-2.2)--(2,0);
				\draw [dashed,-> ](-5,0)--(5,0);
				\draw [dashed, fill=yellow!10] (2.5,2.2) to [out=-115,in=90] (2,0);
				\draw [dashed, fill=white!10] (2.5,-2.2) to [out=115,in=-90] (2,0);
				\draw [dashed, fill=yellow!10] (-2.5,2.2) to [out=-60,in=90] (-2,0);
				\draw [dashed, fill=white!10] (-2.5,-2.2) to [out=60,in=-95] (-2,0);
				\node    at (5.5,0)  { Rez};
				\node  [below]  at (3.2,1.2) {$-$};
				\node  [below]  at (-3.2,1.2) {$-$};
				\node  [below]  at (-3.2,-1.2) {$+$};
				\node  [below]  at (3.2,-1.2) {$+$};
				\node  [below]  at (0,-1.2) {$-$};
				\node  [below]  at (0,1.2) {$+$};
				
				\node  [below]  at (2.3,-0.2) {$z_2$};
				\node  [below]  at (-2.3,-0.2) {$ z_1$};
			\end{tikzpicture}
		\end{center}
		\caption{The    function $e^{2 i t\theta}$ decay  in yellow domains and increase in white domains.   }
		\label{F1}
	\end{figure}
	
	First we consider the scalar factorization problem
	\begin{equation}\label{eq:3.5}
		\begin{cases}
			\delta_+(z)=\delta_-(z)(1+|r(z)|^2),\quad z\in(z_1,z_2),\\
			\delta_+(z)=\delta_-(z),\quad\quad\qquad\qquad\,\, \mathbb{R}\setminus(z_1,z_2),\\
			\delta(z)\to1,\qquad\qquad\qquad\,\,\,\,\qquad z\to\infty,
		\end{cases}
	\end{equation}		
	 which admits a solution  by the Plemelj formula
	\begin{align}
		\delta(z)=\exp\left[i\int_{z_1}^{z_2}\frac{\nu(s)}{s-z}ds\right],
	\end{align}
	where
	\begin{align}\label{eq:3.6}
		&\nu(s)=-\frac{1}{2\pi}\log(1+|r(s)|^2).
	\end{align}
	Define $z_0=\frac{|z_2-z_1|}{2}$ then function $\delta(z)$ has an expansion
	\begin{align}\nonumber
		\delta(z)&=\exp\big(i\int_{z_1}^{z_2}\frac{\nu(s)-\chi(s)\nu(z_2)(s-z_2+1)}{s-z}ds+i\nu(z_2)\int_{z_0}^{z_2}\frac{s-z_2+1}{s-z}ds\\\nonumber
		&=\exp\{i\beta(z,z_2)+i\nu(z_2)+i\nu(z_2)[(z-z_2)\log(z-z_2)\\\nonumber
		&-(z-z_2+1)\log(z-z_2+1)+i\nu(z_2)\log(z-z_2)\}\\\nonumber
		&=e^{i\nu(z_2)+i\beta(z,z_2)}(z-z_2)^{i\nu(z_2)}e^{i\nu(z_2)[(z-z_2)\log(z-z_2)-(z-z_2+1)\log(z-z_2+1)]},
	\end{align}
	where $\chi(s)$ is characteristic function of $(z_0,z_2)$
	$$\beta(z,z_2)=\int_{z_1}^{z_2}\frac{\nu(s)-\nu(z_2)\chi(s)(s-z_2+1)}{s-z}ds.$$
	Let $\rho:=||r||_{L^\infty}(\mathbb{R})$, the function $\delta(z)$ satisfying the properties following:
	\begin{itemize}
		\item  when $z\notin[z_1,z_2]$, we obtain $\delta(z)\delta^*(z^*)=1$ and $\langle\rho\rangle^{-1}\leq|\delta(z)|\leq\langle\rho\rangle$;
		\item For $\mp\mbox{Im}z>0$ we have $|\delta^{\pm}(z)|\leq1$.
	\end{itemize}
	
Next  we just consider the stationary point
  $z=z_2$, and  the point $z=z_1$ is similar.
	\begin{lemma}\label{le31}Define $L_\phi=\tilde{z}+e^{-i\phi}\mathbb{R}=\{z=\tilde{z}+e^{-i\phi}q:q\in\mathbb{R}\}$. For $s\in(\frac{1}{2},1]$ then there is a fixed $C(\rho,s)$ such that that for any $\tilde{z}\in\mathbb{R}$ and any $\phi\in(0,\pi)$
		\begin{align}\label{eq:3.12}
			&||\beta(e^{-i\phi}\cdot,\tilde{z})||_{H^s(\mathbb{R})}\leq C(\rho,s)||r||_{H^s(\mathbb{R})},\\\label{eq:3.12b}
			&|\beta(z,\tilde{z})-\beta(\tilde{z},\tilde{z})|\leq C(\rho,s)||r||_{H^s(\mathbb{R})\,\cap\, L^{2,1}(\mathbb{R})}|z-\tilde{z}|^{s-\frac{1}{2}},\,\,z\,\in L_{\phi}.
		\end{align}
	\end{lemma}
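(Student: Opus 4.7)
The strategy is to recognize $\beta(\cdot,\tilde z)$ as a Cauchy-type integral whose density is engineered to vanish at $s=\tilde z$, so that the singularity of the kernel $1/(s-z)$ at the point where $L_\phi$ meets the real axis is cancelled. Both estimates then reduce to standard mapping properties of the Cauchy operator on $H^s(\mathbb{R})$ together with the Sobolev embedding $H^s\hookrightarrow C^{s-1/2}$.

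First I would show that $\nu(s)=-\tfrac{1}{2\pi}\log(1+|r(s)|^2)$ lies in $H^s(\mathbb{R})$ with $\|\nu\|_{H^s}\le C(\rho,s)\|r\|_{H^s}$. Since $s>1/2$ gives $\|r\|_\infty\lesssim\|r\|_{H^s}$ and makes $H^s$ a Banach algebra, a Moser-type composition estimate (or the direct Taylor expansion of $\log(1+x)$ combined with $\|r^{2k}\|_{H^s}\lesssim\|r\|_{H^s}\|r\|_\infty^{2k-1}$) delivers the bound. The linear correction $\nu(\tilde z)(s-\tilde z+1)$ has coefficient $|\nu(\tilde z)|\le C\|r\|_{H^s}^2$, and after replacing the characteristic function $\chi$ by a smooth cutoff of $[z_0,z_2]$ one checks that the full density $f(s):=\nu(s)-\nu(\tilde z)\chi(s)(s-\tilde z+1)$ lies in $H^s$ with $\|f\|_{H^s}\le C(\rho,s)\|r\|_{H^s}$ and satisfies $f(\tilde z)=0$ by construction.

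For \eqref{eq:3.12}, I would view $\beta(z,\tilde z)=\int f(s)(s-z)^{-1}\,ds$ as the Cauchy transform of $f$ restricted to $L_\phi$. Since $L_\phi\cap\mathbb{R}=\{\tilde z\}$ for $\phi\in(0,\pi)$ and $f(\tilde z)=0$, the potential singularity of the kernel at $\tilde z$ is absorbed by the zero of $f$. Rotating coordinates reduces the problem to the real-axis Cauchy integral acting on an $H^s(\mathbb{R})$ density, whose boundedness is classical (via $L^2$-boundedness of the Hilbert transform together with the Fourier characterisation of $H^s$); uniformity in $\phi$ is obtained by tracking the bounded Jacobian of the rotation. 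Combined with Step~1 this yields the first estimate. For \eqref{eq:3.12b}, the Sobolev embedding $H^s(\mathbb{R})\hookrightarrow C^{s-1/2}(\mathbb{R})$, applied to the function $q\mapsto\beta(\tilde z+e^{-i\phi}q,\tilde z)$ whose $H^s$-norm was just bounded, immediately produces the Hölder-type inequality with exponent $s-1/2$; the $L^{2,1}$-dependence enters when tracking the constant uniformly in $\tilde z\in\mathbb{R}$, via the weighted control of $|\nu(\tilde z)|$ and of $\beta(\tilde z,\tilde z)$ on the complement of a neighbourhood of $\tilde z$.

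\textbf{Main obstacle.} The principal technical difficulty is that multiplication by the characteristic function $\chi_{[z_1,z_2]}$ is not bounded on $H^s(\mathbb{R})$ when $s>1/2$, so one cannot simply treat $f$ as an $H^s$-function on the whole real line. The remedy is to replace $\chi$ by a smooth cutoff and carefully estimate the resulting error, or to split $\beta$ into a contribution from a neighbourhood of $\tilde z$, where the cancellation $f(\tilde z)=0$ is exploited directly, and a far-field contribution where $|s-z|$ is bounded away from zero and the singular integral is trivially controlled by $\|\nu\|_{L^2}$. A secondary obstacle is ensuring that the constants do not degenerate as $\phi\to 0$ or $\phi\to\pi$, where $L_\phi$ collapses onto $\mathbb{R}$ and the rotated Hilbert transform estimate must be stated uniformly in the angle.
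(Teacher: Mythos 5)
Your overall architecture matches the paper's: the first estimate comes from the boundedness of the Cauchy operator $C_{\mathbb{R}}$ from $H^s(\mathbb{R})$ into $H^s(L_\phi)$, and the second then follows from the H\"older embedding $|f(x)-f(y)|\le C_s\|f\|_{H^s(\mathbb{R})}|x-y|^{s-\frac{1}{2}}$ applied to $q\mapsto\beta(\tilde z+e^{-i\phi}q,\tilde z)$ --- exactly the two ingredients the paper uses (it proves that embedding by the same Fourier splitting you allude to, and the $L^{2,1}$ norm enters through a parallel Plancherel estimate). The one substantive divergence is how the mapping property of the Cauchy operator is obtained. The paper cites the endpoint cases $\tau=0,1$ from Beals--Deift--Tomei and interpolates to $\tau=s$; you instead propose to ``rotate coordinates'' and reduce to the Hilbert transform on the real axis. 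That reduction does not work as stated: the density lives on $\mathbb{R}$ while the evaluation points live on the transversal line $L_\phi$, and no single rotation identifies the two lines --- the whole difficulty is precisely that $C_{\mathbb{R}}f$ is being restricted to a line crossing the support of the density at $\tilde z$. To close this you would need either the cited endpoint-plus-interpolation argument or the near/far splitting you mention only as a ``remedy'' (exploiting $f(\tilde z)=0$ near $\tilde z$ and the trivial kernel bound away from it) carried out in full. On the other hand, your preliminary steps --- the Moser/algebra estimate giving $\nu\in H^s(\mathbb{R})$, the observation that $\chi_{[z_1,z_2]}$ is not an $H^s$ multiplier for $s>\frac12$, and the role of the engineered zero of the density at $\tilde z$ --- are correct and are points the paper's very terse proof silently skips. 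Net: same skeleton, correct second half, but your justification of the central operator bound is left as a sketch with a genuine gap where the paper's (citation-dependent) argument is complete.
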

	\begin{proof} When $s=1$ there have
$$||C_{\mathbb{R}}f||_{H^\tau(L_\phi)}\leq C_\tau||f||_{H^\tau(\mathbb{R})}, \ \tau=0,1$$
 which are proved in Lemma 23.3 \cite{26}. By interpolation calculation we get  the case $\tau=s$ where $s\in(0,1)$. And the (\ref{eq:3.12b}) can be obtained from (\ref{eq:3.12}) of the following elementary estimate when $s\in(1/2,1]$
		\begin{equation}\label{eq:3.14}
			|f(x)-f(y)|\leq C_s||f||_{H^s(\mathbb{R})}|x-y|^{s-\frac{1}{2}},
		\end{equation}
		for all $x,y\in\mathbb{R}$ and $f\in H^s(\mathbb{R})$ for a fixed $C_s$.
Noting that
$$f(x+h)-f(x)=\frac{1}{\sqrt{2\pi}}\int e^{ix\xi}(e^{ih\xi}-1)\hat{f}(\xi)d\xi,  $$
 Then for any $\kappa>0$ we have for a fixed $C_s$
		\begin{align}\nonumber
			|f(x+h)-f(x)|&\leq\frac{||f||_{H^s}}{\sqrt{2\pi}}\Big[\big(|h|\int_{|\xi|\leq\kappa}|\xi|^{2-2s}d\xi\big)^{\frac{1}{2}}+\big(\int_{|\xi|\ge\kappa}|\xi|^{-2s}d\xi\big)^{\frac{1}{2}}\Big]\\\nonumber
			&\leq C_s(|h|\kappa^{\frac{3-2s}{2}}+\kappa^{\frac{1-2s}{2}})||f||_{H^s}.
			\end{align}
	By Plancherel formula we have
	\begin{align}\nonumber
			|f(x+h)-f(x)|&\leq C_s\frac{||f||_{L^{2,1}}}{\sqrt{2\pi}}\Big[\big(|h|\int_{|\xi|\leq\kappa}|\xi|^{1-2s}d\xi\big)^{\frac{1}{2}}+\big(\int_{|\xi|\ge\kappa}|\xi|^{-2s}d\xi\big)^{\frac{1}{2}}\Big]\\
			&\leq C_s\kappa^{\frac{1-2s}{2}}||f||_{L^{2,1}},
		\end{align}
		which equals $2C_s|h|^{s-\frac{1}{2}}||f||_{H^s}$ for $\kappa=|h|^{-1}$.\end{proof}

	We define a new unknown function
	 \begin{align}
M^{(1)}(z)=M(z)\delta^{-\sigma_3}(z),\label{trns1}
\end{align}
which satisfies  the following RH problem.
	\begin{problem}\label{RH31}
		Find an analytic function $M^{(1)}(z):\mathbb{C}\setminus\mathbb{R}\to SL_2(\mathbb{C})$ with the following properties
		
		\begin{enumerate}
			\item $ M^{(1)}(z)=I+\mathcal{O}(z^{-1})$ as $z\to\infty$.
			\item $M^{(1)}(z)$ takes continuous boundary values $M^{(1)}_{\pm}(z)$  and  satisfy the jump relation
$$M_+^{(1)}(z)=M_-^{(1)}(z)V^{(1)}(z), $$
 where
			\begin{align}\nonumber
				& V^{(1)}(z)=\delta_-^{\sigma_3}V(z)\delta_+^{-\sigma_3}\\\nonumber
				\,
				& =\begin{cases}\label{eq:3.7}
					\begin{pmatrix}
						1&r^*(z)\delta^2(z)e^{-2it\theta(z)}\\
						0&1
					\end{pmatrix}\begin{pmatrix}
						1&0\\
						r(z)\delta^{-2}(z)e^{2it\theta}&1\end{pmatrix},\quad z\in(-\infty,z_1)\cup(z_2,\infty),\\
					\\
					\begin{pmatrix}
						1&0\\
						\frac{r(z)\delta^{-2}e^{2it\theta}}{1+|r(z)|^2}&1
					\end{pmatrix}\begin{pmatrix}
						1&\frac{r^*(z)\delta^{-2}e^{-2it\theta}}{1+|r(z)|^2}\\
						0&1
					\end{pmatrix},\,\,\qquad\qquad\quad z\in(z_1,z_2).
				\end{cases}
			\end{align}
		\end{enumerate}
	\end{problem}
	
	\subsection{A mixed  RH  Problem and its decomposition}
	\quad\, In this section we construct a mixed  RH Problem
	by follow closely the argument of  \cite{25,20,27}.
	
	Fix a smooth cut-off function of compact support, with $\chi(x)\geq0$ for any $x$ and $\int\chi dx=1$. Let $\chi_\epsilon(x)=\epsilon^{-1}\chi(\epsilon^{-1}x)$ for $\epsilon\neq0$. We define $\mathbf{r}(z)$ as follows
	\begin{equation}
		\mathbf{r}(z)=\begin{cases}
			r(\mbox{Re}z),\,\quad\quad\,\,\,\,\,\,\,\,\mbox{for}\,\,\mbox{Im}z=0,\\
			\chi_{\mbox{Imz}}*r(\mbox{Re}z),\,\,\mbox{for}\,\,\mbox{Im}z\ne0.
		\end{cases}
	\end{equation}
	For convenience of expression, for $j=1,2$ we define rays
\begin{align}
&L_j=\{z_j+\mathbb{R}^+e^{i\frac{\pi}{4}}\}\cup\{z_j+\mathbb{R}^+e^{i\frac{5\pi}{4}}\}=\Sigma_{j2}\cup\Sigma_{j3},\\
&\overline{L}_j=\{z_j+\mathbb{R}^+e^{-i\frac{\pi}{4}}\}\cup\{z_j+\mathbb{R}^+e^{i\frac{3\pi}{4}}\}=\Sigma_{j1}\cup\Sigma_{j4},\\
&I_{12}=(z_1,0),\quad I_{22}=(0,z_2),\,\,\,\,I_{11}=(-\infty,z_1),\,\,\,\,I_{21}=(z_2,\infty).\nonumber
\end{align}
These rays divide the complex plane $\mathbb{C}$ into ten domains $\Omega_{ij}, i=1,2; j=1,3,4,6; \Omega_2, \Omega_5$.
See  Figure \ref{fsptrum}.
	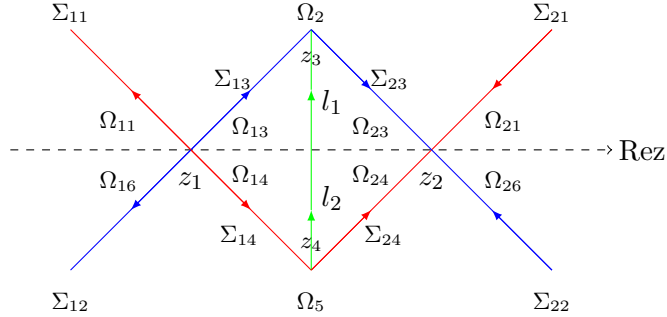
\begin{figure}
		\begin{center}
			\begin{tikzpicture}[scale=0.8]
				\draw [dashed,-> ](-5,0)--(5,0);
				\draw [red]( 0,-2)--(4,2);
				\draw [red]( 0,-2)--(-4, 2);
				\draw [blue]( 0, 2)--(4,-2);
				\draw [ blue]( 0, 2)--(-4,-2);
				\draw[-latex,green] (0,-2)--(0,-1);
				\draw[-latex,green] (0,-1)--(0,1);
				\draw[green](0,1)--(0,2);
				\draw[-latex,red] ( 0,-2)--(1,-1);
				\draw[-latex,blue] (-2,0)--(-1,1);
				\draw[-latex,red] (4,2)--(3,1);
				\draw[-latex,blue] ( 4,-2)--(3,-1);
				\draw[-latex,red] (-2,0)--(-3,1);
				\draw[-latex,blue] (-2,0)--(-3,-1);
				\draw[-latex,blue] ( 0,2)--( 1,1);
				\draw[-latex,red] (-2,0)--(-1,-1);
				\node    at (5.5,0)  { Rez};
				\node  [below]  at (3.2,0.8) {\footnotesize $\Omega_{21}$};
				\node  [below]  at (3.2,-0.2) {\footnotesize $\Omega_{26}$};
				\node  [below]  at (0,2.6) {\footnotesize $\Omega_2$};
				\node  [below]  at (0,1.8) {\footnotesize $z_3$};
				\node  [below]  at (0,-1.3) {\footnotesize $z_4$};
				\node  [below]  at (2,-0.2) {$z_2$};
				\node  [below,]  at (1,0.7) {\footnotesize $\Omega_{23} $};
				\node  [below]  at (1,-0.1) {\footnotesize $\Omega_{24} $};
				\node  [below]  at (-3.2,0.8) {\footnotesize $\Omega_{11}$};
				\node  [below]  at (-3.2,-0.2) {\footnotesize $\Omega_{16}$};
				\node  [below]  at (0,-2.2) {\footnotesize $\Omega_5$};
				\node  [below]  at (-2,-0.2) {$ z_1$};
				\node  [below]  at (-1,0.7) {\footnotesize $\Omega_{13}$};
				\node  [below]  at (-1,-0.1) {\footnotesize $\Omega_{14}$};
				\node  [below]  at (4,2.6) {\footnotesize  $\Sigma_{21}$};
				\node  [below]  at (-4,2.6) {\footnotesize $\Sigma_{11}$};
				\node  [below]  at (4,-2.2) {\footnotesize $\Sigma_{22}$};
				\node  [below]  at (-4,-2.2) {\footnotesize $\Sigma_{12}$};
				\node  [below]  at (1.3,1.5) {\footnotesize $\Sigma_{23}$};
				\node  [below]  at (-1.3,1.5) {\footnotesize $\Sigma_{13}$};
				\node  [below]  at ( 1.2,-1.1) {\footnotesize $\Sigma_{24}$};
				\node  [below]  at (-1.2,-1.1) {\footnotesize $\Sigma_{14}$};
				\node[right] at (0,0.8){$l_1$};
				\node[right] at (0,-0.8){$l_2$};
			\end{tikzpicture}
		\end{center}
		\caption{The continuous extension  domains $\Omega_{nj}, \ n=1, 2; j=1,3,4,6$ and  contour $\Sigma^{(2)}$  for mixed RH problem $M^{(2)}(z)$  }
		\label{fsptrum}
	\end{figure}
	
We define functions $R_{ij} :\overline{\Omega}_{ij}\to\mathbb{C},(i=1,2; j=1,3,4,6)$ and a constant $c$ with boundary values satisfying
	\begin{align}\label{eq:R1}
		&R_{j1}(z)=\begin{cases}
			r(z),\qquad\qquad\qquad\qquad\quad\qquad\quad\qquad\qquad\,\,\, z\in I_{j1},\\
		    \hat{r}_0(z-z_j)^{-2i\nu(z_j)}\delta^2,\quad\quad \quad\,\,\,\,\,\,\qquad\qquad\qquad z\in\Sigma_{j1},
		\end{cases}\\
		&R_{j3}(z)=\begin{cases}\label{eq:R2}
			\frac{r^*(z)}{1+|r(z)|^2},\qquad\qquad\qquad\,\,\qquad\qquad\qquad\, \,\quad\,\,\,\,\,\,\,z\in I_{j2},\\
			\frac{\hat{r}_0^*(z)}{1+|r(z_j)|^2}(z-z_j)^{2i\nu(z_j)}\delta^{-2},\,\,\,\,\,\,\,\,\quad\quad\qquad\qquad\,\, z\in\Sigma_{j3},
		\end{cases}\\
		&R_{j4}(z)=\begin{cases}\label{eq:R3}
			\frac{r(z)}{1+|r(z)|^2},\qquad\qquad\qquad\,\qquad\qquad\qquad\, \,\quad\,\,\,\,\,\,\,\,\,z\in I_{j4},\\
			\frac{\hat{r}_0(z)}{1+|r(z_j)|^2}(z-z_j)^{-2i\nu(z_j)}\delta^{2},\,\,\,\,\,\,\,\,\,\qquad\qquad\qquad z\in\Sigma_{j4},
		\end{cases}\\
		&R_{j6}(z)=\begin{cases}\label{eq:R4}
			r^*(z),\qquad\qquad\qquad\,\, \,\,\qquad\quad\qquad\qquad\qquad\,\,\,\, z\in I_{j6},\\
		    \hat{r}_0^*(z-z_j)^{2i\nu(z_j)}\delta^{-2},\quad\qquad\,\,\,\,\,\,\,\, \qquad\qquad\qquad z\in\Sigma_{j6},
		\end{cases}
	\end{align}
	where $\hat{r}_0=r(z_j)e^{-2i\nu(z_j)-2\beta(z_j,z_j)}$ .
	\begin{proposition}\label{pr31} 	Fix $\lambda_0>0$ and assume $||r||_{H^s(\mathbb{R})\,\cap\, L^{2,1}(\mathbb{R})}<\lambda$ a preassigned $s\in(1/2,1]$. Then for $j=1,2;j=1,2,3,4$ there exist functions $R_{ji}: \overline{\Omega_{ji}}\to\mathbb{C}$
		such that forall $z\in\Omega_{ji}+z_j$ and $\psi(x)=-\chi(x)-x\chi'(x)$, we have for a fixed $c$
		\begin{align}\nonumber
			|\overline\partial R_{ji}(z)|&\leq c||r||_{H^s(\mathbb{R})\,\cap L^{2,1}(\mathbb{R})}|z-z_j|^{s-\frac{3}{2}}\\\label{eq:3.19}
			&+c|\partial_{\tiny\rm \mbox{Re}z}\mathbf{r}(z)|+c|(\mbox{Im z})^{-1}\psi_{\tiny\rm \mbox{Im}z}*r(\mbox{Re z})|.
		\end{align}
	\end{proposition}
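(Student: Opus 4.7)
My plan is to build each $R_{ji}$ by a standard interpolation in the angular variable centered at the stationary point $z_j$, matching the prescribed boundary value on the real sub-interval at $\phi=0$ or $\phi=\pi$ and the frozen phase value on the ray $\Sigma_{ji}$. Concretely, using polar coordinates $z-z_j=\rho e^{i\phi}$ and a smooth cut-off $\mathcal{K}(\phi)$ with $\mathcal{K}=1$ for $\phi$ near the real axis and $\mathcal{K}=0$ for $\phi$ near the ray angle $\pm\pi/4$ or $\pm 3\pi/4$, I would set, for example in $\Omega_{j1}$,
\begin{equation*}
R_{j1}(z)=\bigl[\mathbf{r}(z)-r(z_j)\bigr]\mathcal{K}(\phi)+r(z_j)(z-z_j)^{-2i\nu(z_j)}\delta^{2}(z)\,,
\end{equation*}
and analogous formulas for the other three sectors, replacing $r$ by $r^*/(1+|r|^2)$, $r/(1+|r|^2)$, $r^*$ according to \eqref{eq:R1}--\eqref{eq:R4}. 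The mollified extension $\mathbf{r}(z)$ is used off the real axis precisely so that the $\bar\partial$-action on $\mathbf{r}$ is under control, while $(z-z_j)^{-2i\nu(z_j)}\delta^{2}(z)$ is holomorphic away from $(-\infty,z_j]$ and matches the desired ray data on $\Sigma_{j1}$.

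Next I would differentiate. In polar coordinates $\bar\partial=\tfrac{1}{2}e^{i\phi}(\partial_\rho+i\rho^{-1}\partial_\phi)$, so
\begin{equation*}
\bar\partial R_{j1}(z)=\mathcal{K}(\phi)\,\bar\partial\mathbf{r}(z)+\bigl[\mathbf{r}(z)-r(z_j)(z-z_j)^{-2i\nu(z_j)}\delta^{2}(z)\bigr]\cdot\tfrac{i e^{i\phi}}{2\rho}\mathcal{K}'(\phi).
\end{equation*}
The term $\bar\partial\mathbf{r}$ splits, via the definition $\mathbf{r}(z)=\chi_{\mathrm{Im}z}\!*\!r(\mathrm{Re}z)$, into a piece involving $\partial_{\mathrm{Re}z}\mathbf{r}$ and a piece coming from differentiating the mollifier width; a direct computation using $\partial_\epsilon\chi_\epsilon(x)=\epsilon^{-1}\psi_\epsilon(x)$ with $\psi(x)=-\chi(x)-x\chi'(x)$ produces exactly the second and third contributions on the right of \eqref{eq:3.19}. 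The first contribution on the right of \eqref{eq:3.19} is to be extracted from the angular-cutoff term: on the support of $\mathcal{K}'(\phi)$ the bracket can be written as
\begin{equation*}
\mathbf{r}(z)-r(z_j)+r(z_j)\bigl[1-(z-z_j)^{-2i\nu(z_j)}\delta^{2}(z)\bigr],
\end{equation*}
and the two differences are estimated separately.

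The first difference is bounded by combining the Hölder estimate \eqref{eq:3.14} (or equivalently the $(s-1/2)$-Hölder embedding of $H^s$, $s>1/2$) with the fact that $\|\mathbf{r}-r\|_{L^\infty}\lesssim|\mathrm{Im}z|^{s-1/2}\|r\|_{H^s}$, giving the $|z-z_j|^{s-1/2}$ bound which produces the $\rho^{s-3/2}$ factor after dividing by $\rho$. For the second difference I would use Lemma \ref{le31}: the factorization $\delta^{2}(z)=e^{2i\nu(z_j)+2i\beta(z,z_j)}(z-z_j)^{2i\nu(z_j)}\cdot(\text{analytic bounded factor with value 1 at } z_j)$ combined with \eqref{eq:3.12b} yields $|1-(z-z_j)^{-2i\nu(z_j)}\delta^{2}(z)|\lesssim\|r\|_{H^s\cap L^{2,1}}|z-z_j|^{s-1/2}$, and again the $\rho^{-1}$ from $\mathcal{K}'(\phi)$ pushes this to $\rho^{s-3/2}$.

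The main technical obstacle is to carry out the construction so that all four boundary prescriptions in \eqref{eq:R1}--\eqref{eq:R4} are satisfied simultaneously and continuously across the common boundaries of the sectors (in particular along the real interval between the two stationary points, where the factorization of the jump changes). I would address this by fixing partitions of unity adapted to each sector and using the scalar identity $1+|r|^2=(1+r^*\delta^2)(1+r\delta^{-2})$-type manipulations from the factorizations already used in RH problem \ref{RH31}, so that the extensions paste consistently across $\Sigma_{ji}$ without generating additional jumps; the bookkeeping near $z_j$ is entirely analogous to \cite{25,20,27} once one notes that $\delta(z)(z-z_j)^{-i\nu(z_j)}$ is Hölder of order $s-1/2$ by Lemma \ref{le31}.
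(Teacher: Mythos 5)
Your construction and estimate are essentially the paper's own proof: an angular cut-off interpolating between the mollified $\mathbf r$ on the real interval and the (holomorphic) frozen ray value, with $\overline\partial$ landing either on the mollifier — producing the second and third terms of \eqref{eq:3.19} via $\partial_\epsilon\chi_\epsilon=\epsilon^{-1}\psi_\epsilon$ — or on the cut-off, the resulting bracket being split into $\mathbf r-r(z_j)$ and $r(z_j)[1-(z-z_j)^{-2i\nu}\delta^{2}]$ and controlled by the H\"older estimate \eqref{eq:3.14} and Lemma \ref{le31}, exactly as in the text. One slip to fix: the displayed definition $R_{j1}=[\mathbf r-r(z_j)]\mathcal K+r(z_j)(z-z_j)^{-2i\nu(z_j)}\delta^{2}$ does not restrict to $r(z)$ on $I_{j1}$ (the second summand is not identically $r(z_j)$ there away from $z_j$), so as written it violates \eqref{eq:R1} and would leave a residual jump on the real axis; the formula consistent with your own $\overline\partial$ computation — and with the paper — is $R_{j1}=[\mathbf r-F]\mathcal K+F$ with $F=\hat r_0(z-z_j)^{-2i\nu(z_j)}\delta^{2}$.
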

	\begin{proof} We consider the stationary point at $z_2$, the case at  $z_1$ is similar. For $\zeta\in\Omega_{13}$ we use $0<\arg(\zeta-z_1)<\pi/4$ and set $G(\zeta):=g(\arg(\zeta-z_1))$ where the function $g: [0,\pi/4]\to[0,1]$ is a smooth function such that $g(\psi)=1$ for all $\psi\in[0,\pi/6]$ and $g(\pi/4)=0$. the function $G$ is continuous in $\Omega_{13}\setminus\{z_1\}$ and fulfills
		\begin{equation}
		G(\zeta)=\begin{cases}
				1,\quad\mbox{for}\,\zeta\in I_{12},\\
				0,\quad\mbox{for}\,\zeta\in \Sigma_{13},\\
			\end{cases}
		\end{equation}
		Moreover, we can find a constant $c$ such that
		$$|\overline\partial G(\zeta)\leq c|\zeta-z_1|^{-1}$$
	
		Consider the function of $R_{2i}(z)$ which can be defined explicitly by setting for $z-z_2=u+iv$. For $i=1,3$ we obtain
		\begin{align}\nonumber
			&R_{21}(z)=G(\zeta)\mathbf{r}(z)+(1-G(\zeta)))f_{21}(u+iv),\\\nonumber
			&R_{23}(z)=\cos(2(\arg(z-z_0)-\pi))\frac{\mathbf{r}^*(z)}{1+|\mathbf{r}(z)|^2}\\\label{eq:3.13}
			&\quad\qquad\,\,+(1-\cos(2\arg(z-z_0)-\pi))f_{23}(u+iv).
		\end{align}
		For $i=4,6$ which can be defined similarly. Now we consider the boundary values and consider case $i=1$ only. We have
		$$\overline\partial R_{21}=(\mathbf{r}-f_{21})\overline\partial b+\frac{b}{2}(\chi_{\mbox{Im}z}*r'(\mbox{Re}z)+i(\mbox{Im}z)^{-1}\psi_{\mbox{Im} z}*r(\mbox{Re}z)),$$
		with $\psi(x)=-\chi(x)-x\chi'(x)$ and $f_{21}	\frac{\hat{r}_0^*(z)}{1+|r(z_1)|^2}(z-z_1)^{2i\nu(z_j)}\delta^{-2}(1-\Xi(\zeta))$.	Notice that $\hat\psi(0)=0$. Then we have the bound
		\begin{align}\nonumber
			|\overline\partial R_{21}|&\leq|\chi_{\mbox{Im}z}*r'(\mbox{Re}z)|+|(\mbox{Im}z)^{-1}\psi_{\mbox{Im}z}*r(\mbox{Re}z)|\\
			&+\frac{c}{|z-z_2|}|(r(z)-r(z_2)|+|f_{21}(z)-r(z_2)|).
		\end{align}
		By (\ref{eq:3.14}) we acquire that
$$|r(z)-r(z_2)|\leq C|z-z_2|^{s-\frac{1}{2}}||r||_{H^s(\mathbb{R})\,\cap\,L^{2.s}(\mathbb{R})}$$
 to get the desired estimate for $|\overline\partial R_{21}|$ we need to bound the last line. Then, we have
		\begin{align}\nonumber
			&f_{21}(z)-r(z_2)=r(z_2)\times[\exp(2i\mu(z_2)((z-z_2)\log(z-z_2)\\
			&-(z-z_2+1)\log(z-z_2+1))+2(\beta(z,z_2)-\beta(z_2,z_2)))-1].
		\end{align}
		According to Lemma \ref{le31} we have
$$\beta(z,z_2)-\beta(z_2,z_2)=C(\rho,s)||r||_{H^s(\mathbb{R})\cap L^{2,1}(\mathbb{R})}|z-z_2|^{s-\frac{1}{2}},$$
	  and notice that both $(z-z_2)\log(z-z_2)$ and $(z-z_2+1)\log(z-z_2+1)$ are $\mathcal{O}(|z-z_2|^{s-\frac{1}{2}})$ when $z\to z_2$. We get the desired estimate for $|\overline\partial R_{21}|$.
	\end{proof}
	
	Based on  Proposition \ref{pr31}, we  define a new unknown function
	\begin{align}
		&\mathcal{R}^{(2)}=\begin{cases}\label{eq:3.16}
			\begin{pmatrix}
				1&0\\
				R_{j1}e^{2it\theta}\delta^{-2}&1
			\end{pmatrix}^{-1},\qquad z\in\Omega_{j1},\\[8pt]
			\begin{pmatrix}
				1&R_{j3}e^{-2it\theta}\delta^{2}\\
				0&1
			\end{pmatrix}^{-1},\quad\,\,\, z\in\Omega_{j3},\\[8pt]
			\begin{pmatrix}
				1&0\\
				R_{j4}e^{2it\theta}\delta^{-2}&1
			\end{pmatrix},\qquad\quad\, z\in\Omega_{j4},\\[8pt]
			\begin{pmatrix}
				1&R_{j6}e^{-2it\theta}\delta^2\\
				0&1
			\end{pmatrix},\qquad\quad\,\, z\in\Omega_{j6},
			\\[5pt]
			\begin{pmatrix}
				1&0\\
				0&1
			\end{pmatrix},\qquad\qquad\qquad\quad\,\,\,\,\, z\in\Omega_{2}\cup\Omega_{5}.
		\end{cases}
	\end{align}
We introduce  jump contour
 $$\Sigma^{(2)}=L_1\cup\overline{L}_1\cup L_2\cup\overline{L}_2\cup l_1\cup l_2,$$
 then  it can be shown that
\begin{align}
 M^{(2)}(z)=M^{(1)}(z)\mathcal{R}^{(2)}\label{eq:3.15}
\end{align}
 satisfies the following RH problem.


	\begin{problem}\label{RH32}
		Find  a continuous function $M^{(2)}(z):\mathbb{C}\setminus\Sigma^{(2)}\to SL_2(\mathbb{C})$ with the following properties
		\begin{enumerate}
			\item $ M^{(2)}(z)=I+\mathcal{O}(z^{-1})$ as $z\to\infty$.
			\item For $z\in\Sigma^{(2)}$, the boundary values satisfy the jump relation
$$M^{(2)}_+(z)=M^{(2)}_-(z)V^{(2)}(z),$$
 where
			\begin{align}\label{eq:3.26}
				& V^{(2)}(z)=\begin{cases}
					\begin{pmatrix}
						1&0\\
						R_{j1}(z)\delta^{-2}(z)e^{2it\theta}&1\end{pmatrix},\quad\,\, z\in\Sigma_{j1},\\
					\\
					\begin{pmatrix}
						1&-R_{j3}(z)\delta^{2}e^{-2it\theta}\\
						0&1
					\end{pmatrix},\quad\quad z\in\Sigma_{j3},
					\\ \\
					\begin{pmatrix}
						1&0\\
						R_{j4}(z)\delta^{-2}e^{2it\theta}&1
					\end{pmatrix},\quad\,\quad\,\,\,\,\, z\in\Sigma_{j4},\\
					\\
					\begin{pmatrix}
						1&-R_{j6}(z)\delta^2e^{-2it\theta}\\
						0&1
					\end{pmatrix},\quad\,\,\quad z\in\Sigma_{j6},\\
					\\	\begin{pmatrix}
						1&(R_{23}-R_{13})e^{-2it\theta}\delta^2\\
						0&1
					\end{pmatrix},\qquad z\in l_1,\\
					\\
					\begin{pmatrix}
						1&0\\
						(R_{14}-R_{24})e^{2it\theta}\delta^{-2}&1
					\end{pmatrix},\qquad z\in l_2.\\
				\end{cases}
			\end{align}
			\item For $\mathbb{C}\setminus\Sigma^{(2)}$ we have
$$\overline\partial M^{(2)}(z)=M^{(2)}(z)\overline\partial\mathcal{R}^{(2)}(z),$$
where
			\begin{equation}\label{eq:3.11}
				\overline\partial\mathcal{R}^{(2)}=\begin{cases}
					\begin{pmatrix}
						1&0\\
						-\overline\partial R_{j1}(z)e^{2it\theta}\delta^{-2}&1
					\end{pmatrix},\qquad\,\, z\in\Omega_{j1},\\
					\\
					\begin{pmatrix}
						1&-\overline\partial R_{j3}(z)e^{-2it\theta}\delta^{2}\\
						0&1
					\end{pmatrix},\quad\,\,\,\,\,\,\,\, z\in\Omega_{j3},\\
					\\
					\begin{pmatrix}
						1&0\\
						\overline\partial R_{j4}(z)e^{2it\theta}\delta^{-2}&1
					\end{pmatrix},\qquad\quad\, z\in\Omega_{j4},\\
					\\
					\begin{pmatrix}
						1&\overline\partial R_{j6}(z)e^{-2it\theta}\delta^2\\
						0&1
					\end{pmatrix},\qquad\quad\,\, z\in\Omega_{j6},
					\\
					\\
					0,\qquad\,\,\,\qquad\qquad\qquad\quad\qquad\quad z\in\Omega_{j2}\cup\Omega_{j5}.
				\end{cases}
			\end{equation}
		\end{enumerate}
	\end{problem}
	
Next we   decompose $ M^{(2)}$  in the form
\begin{align}
M^{(2)}(z)=M^{(3)}(z) M^{rhp} (z),\label{trans2}
\end{align}
where $M^{rhp} (z)$ as a pure RH problem resulting
from  $M^{(2)}(z) $ by setting $\overline\partial\mathcal{R}^{(2)}\equiv0, \ z\in \mathbb{C}\setminus\Sigma^{(2)}$,   which
satisfies the following RH problem

 \begin{problem}\label{RH320}
		Find  a analytical  function $M^{rhp}(z):\mathbb{C}\setminus\Sigma^{(2)}\to SL_2(\mathbb{C})$ with the following properties
		\begin{enumerate}
			\item $ M^{rhp}(z)=I+\mathcal{O}(z^{-1})$ as $z\to\infty$.
			\item For $z\in\Sigma^{(2)}$, the boundary values satisfy the jump relation
$$M^{rhp}_{+}(z)=MM^{rhp}_{-}(z)V^{(2)}(z), $$
where
 $V^{(2)}(z)$ is given by (\ref{eq:3.26}).
		\end{enumerate}
	\end{problem}
While the  $ M^{(3)}(z)$ defined by (\ref{trans2})   satisfies   the following  RH Problem
	\begin{problem}\label{RH33}
		Find a continuous matrix-valued function $M^{(3)}(z):\mathbb{C}\to SL_2(\mathbb{C})$ with the following properties:
		\begin{enumerate}
			\item $M^{(3)}(z)=I+\mathcal{O}(z^{-1})$ as $z\to\infty$.
			\item For $z\in\mathbb{C}$, we have
			\begin{equation}\nonumber
				\overline\partial M^{(3)}(z)=M^{(3)}(z)W^{(3)}(z),
			\end{equation}
			where $W^{(3)}(z):=M^{rhp}(z) \overline\partial\mathcal{R}^{(2)}(z)M^{rhp}(z)^{-1}$ and $\overline\partial\mathcal{R}^{(2)}(z)$ is defined by (\ref{eq:3.11}).
		\end{enumerate}
	\end{problem}

	\subsection{A solvable   model near $z_1$ and $z_2$}

We introduce  jump contour
 $$\Sigma^{(3)}=L_1\cup\overline{L}_1\cup L_2\cup\overline{L}_2, $$
and consider the following RH problem

 \begin{problem}\label{RH320}
		Find  a analytical  function $M^{loc}(z):\mathbb{C}\setminus\Sigma^{(3)}\to SL_2(\mathbb{C})$ with the following properties
		\begin{enumerate}
			\item $ M^{loc}(z)=I+\mathcal{O}(z^{-1})$ as $z\to\infty$.
			\item For $z\in\Sigma^{(3)}$, the boundary values satisfy the jump relation
$$M^{loc}_{+}(z)=M^{loc}_{-}(z)V^{(2)}(z),$$
 where
 $V^{(2)}(z)$ is given by (\ref{eq:3.26}).
		\end{enumerate}
	\end{problem}
This RH problem is solvable and its solution is given by
 \begin{align*}
		M^{loc}(z)=    M^{pc}(\zeta_1)+  M^{pc}(\zeta_2),
		\end{align*}
where $ M^{pc}(\zeta_j)$ is the solution of the parabolic cylinder model  in  \ref{AA},  and  $\zeta_j=\sqrt{(-1)8t(\alpha+6\beta z_j)}(z-z_j), \ j=1, 2$.

 Direct calculation shows that  there is constant $c>0$ such that
	$$\|V_l-I\|_{L^\infty (l_1\cup l_2)} \lesssim e^{-ct},\quad t\to\infty.$$
Further by using small RH problem, it can be shown that

\begin{proposition}\label{prop34}
		\begin{align*}
			M^{rhp} ( z)= 	M^{loc}  ( z) (I+ O(e^{-ct})).
		\end{align*}

	\end{proposition}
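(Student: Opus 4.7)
The plan is to reduce the statement to a small-norm Riemann--Hilbert argument by defining the error matrix
\begin{equation*}
E(z) := M^{rhp}(z)\,M^{loc}(z)^{-1}
\end{equation*}
and showing that $E$ solves a RH problem whose jump is uniformly exponentially close to the identity. First I would verify that $E(z)$ has no jump across $\Sigma^{(3)} = L_1\cup\overline{L}_1\cup L_2\cup\overline{L}_2$: on this contour both $M^{rhp}$ and $M^{loc}$ satisfy $M_+ = M_-\,V^{(2)}$ with the same jump matrix, so the factor $V^{(2)}$ cancels in $M^{rhp}_+ (M^{loc}_+)^{-1} = M^{rhp}_- V^{(2)} (V^{(2)})^{-1} (M^{loc}_-)^{-1} = E_-$. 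Hence $E$ is analytic in $\mathbb{C}\setminus(l_1\cup l_2)$.

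Next I would compute the jump of $E$ on $l_1\cup l_2$. Since $M^{loc}$ has no jump across $l_1\cup l_2$ (they do not belong to $\Sigma^{(3)}$), using the $M^{rhp}$-jump $V_l$ on $l_1\cup l_2$ one gets
\begin{equation*}
E_+(z) = E_-(z)\,V_E(z),\qquad V_E(z) = M^{loc}(z)\,V_l(z)\,M^{loc}(z)^{-1}.
\end{equation*}
The hypothesis $\|V_l - I\|_{L^\infty(l_1\cup l_2)} \lesssim e^{-ct}$ combined with the fact that $M^{loc}$, built out of parabolic cylinder functions, is uniformly bounded and bounded below in determinant on $l_1\cup l_2$ (these rays stay a bounded distance from the stationary points $z_1,z_2$), yields the crucial bound
\begin{equation*}
\|V_E - I\|_{L^\infty(l_1\cup l_2)\cap L^2(l_1\cup l_2)} \lesssim e^{-ct},\qquad t\to\infty.
\end{equation*}

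With this estimate in hand, standard small-norm RH theory (Beals--Coifman factorization together with a Neumann series for the Cauchy operator $C_{V_E - I}$) produces a unique solution of the form $E(z) = I + \frac{1}{2\pi i}\int_{l_1\cup l_2}\mu(s)(V_E(s)-I)(s-z)^{-1}ds$, and the norm estimate transfers directly to
\begin{equation*}
\|E(z) - I\|_{L^\infty(\mathbb{C})} \lesssim e^{-ct}.
\end{equation*}
Writing $E = I + O(e^{-ct})$ and inverting, $M^{rhp}(z) = E(z)\,M^{loc}(z) = M^{loc}(z)\bigl(I + M^{loc}(z)^{-1}(E(z)-I)M^{loc}(z)\bigr)$; boundedness of $M^{loc}$ and $(M^{loc})^{-1}$ away from $z_1,z_2$ converts the left-multiplicative error into the right-multiplicative form $M^{rhp}(z) = M^{loc}(z)(I + O(e^{-ct}))$ claimed.

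The main obstacle is verifying that $M^{loc}$ and its inverse are uniformly controlled on the auxiliary arcs $l_1\cup l_2$ so that the conjugation $M^{loc}\,(V_l - I)\,(M^{loc})^{-1}$ does not inflate the exponential smallness; this reduces to checking the explicit asymptotic behavior of the parabolic cylinder parametrices $M^{pc}(\zeta_j)$ on rays that stay bounded away from the stationary points, for which the uniform asymptotics recalled in appendix \ref{AA} suffice.
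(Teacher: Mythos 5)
Your proposal is correct and follows exactly the route the paper intends: the paper gives no written proof beyond noting $\|V_l-I\|_{L^\infty(l_1\cup l_2)}\lesssim e^{-ct}$ and invoking a ``small RH problem'' argument, and your error-matrix construction $E=M^{rhp}(M^{loc})^{-1}$ with the conjugated jump $M^{loc}V_l(M^{loc})^{-1}$ on $l_1\cup l_2$ and a Beals--Coifman/Neumann-series resolution is precisely the standard filling-in of that assertion. The only minor point worth adding is that $E$ could a priori have isolated singularities at the self-intersection points $z_1,z_2$ of the contour, which are removable since $E$ is bounded there.
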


	\subsection{The $\overline\partial$ argument} \label{dbar}
	
	It is well understood that the solution to the  $\overline\partial$ problem \ref{RH33}
 can be given with the following  Cauchy integral
	\begin{equation}
		M^{(3)}(z)=I+\frac{1}{2\pi i}\int_{\mathbb{C}}\frac{M^{(3)}(z)W^{(3)}(z)}{\varsigma-z}d\varsigma. \label{pieee}
	\end{equation}
  Let $||r||_{H^s(\mathbb{R})\,\cap\, L^{2,s}(\mathbb{R})}\leq\lambda_0$,
define  the following operator
		\begin{equation}
			JH(z):=\frac{1}{\pi}\int_{\mathbb{C}}\frac{H(\varsigma)W(\varsigma)}{\varsigma-z}dA(\varsigma),
		\end{equation}
then it can be shown that
\begin{lemma}   The operator $J:L^\infty(\mathbb{C})\to L^\infty(\mathbb{C})\cap C^0(\mathbb{C})$, and there  exist   a $C=C(\lambda_0)$ such that
		\begin{equation}
			||J||_{L^\infty(\mathbb{C})\to L^\infty(\mathbb{C})}\leq Ct^{\frac{1-2s}{4}},\, \,\,t>0.
		\end{equation}
 \end{lemma}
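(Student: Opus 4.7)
The plan is to reduce the operator-norm estimate to a uniform pointwise bound on an area integral and then exploit the structure of $\overline\partial\mathcal{R}^{(2)}$. From the definition,
$$|JH(z)| \leq \frac{\|H\|_{L^\infty}}{\pi}\iint_{\mathbb{C}} \frac{|W(\varsigma)|}{|\varsigma-z|} dA(\varsigma),$$
so it suffices to show that this area integral is bounded, uniformly in $z\in\mathbb{C}$, by $Ct^{(1-2s)/4}$. Since $M^{rhp}$ and $(M^{rhp})^{-1}$ are uniformly bounded on $\mathbb{C}$ by Proposition \ref{prop34} and the explicit parabolic-cylinder model, one has $|W^{(3)}(\varsigma)|\lesssim |\overline\partial\mathcal{R}^{(2)}(\varsigma)|$, whose support sits in the eight sectors $\Omega_{ji}$. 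By the symmetry of the construction I may reduce to the single sector $\Omega_{21}$ attached to the stationary point $z_2$; the remaining sectors are treated identically.

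Inside $\Omega_{21}$ I parametrize $\varsigma-z_2 = \rho e^{i\phi}$ with $\phi\in(0,\pi/2)$. The factor $|\delta(\varsigma)|$ is bounded above and below, and a Taylor expansion of $\theta$ at $z_2$ yields
$$|e^{2it\theta(\varsigma)}|\leq e^{-ct\rho^2\sin(2\phi)}.$$
Substituting the three-term bound of Proposition \ref{pr31} splits the integral as $I_1+I_2+I_3$ according to the summands $|\zeta-z_2|^{s-3/2}$, $|\partial_{\mathrm{Re}\zeta}\mathbf{r}|$ and $|(\mathrm{Im}\zeta)^{-1}\psi_{\mathrm{Im}\zeta}*r(\mathrm{Re}\zeta)|$. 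For $I_1$ I apply Hölder's inequality with conjugate exponents $p\in(1,2)$ and $q=p/(p-1)>2$:
$$I_1 \leq \left\|\frac{1}{|\varsigma-z|}\right\|_{L^p(\Omega_{21})} \cdot \||\varsigma-z_2|^{s-3/2}e^{-ct\rho^2\sin(2\phi)}\|_{L^q(\Omega_{21})}.$$
The first factor is bounded uniformly in $z$ because $p<2$; for the second, polar coordinates and the rescaling $\rho\mapsto\rho/\sqrt{t}$ produce a factor $t^{(3/2-s)/2-1/q}$, which at $q=2$ coincides with the target $t^{(1-2s)/4}$. Since $s>1/2$ the condition $q<4/(3-2s)$ needed for integrability at $\rho=0$ is compatible with $q$ slightly above $2$; taking $q$ arbitrarily close to $2$ gives a loss of $t^{\varepsilon}$ that is absorbed into $C(\lambda_0)$. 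For $I_2$, Cauchy-Schwarz in $u=\mathrm{Re}(\varsigma-z_2)$ together with the Gaussian factor in $v$ reduces the estimate to $\|\partial_u\mathbf{r}\|_{L^2}\lesssim\|r\|_{H^s\cap L^{2,1}}$, again yielding the $t^{(1-2s)/4}$ rate. For $I_3$ the zero-average property $\hat\psi(0)=0$ fixed in Proposition \ref{pr31} converts $v^{-1}\psi_v*r$ into a fractional-derivative expression whose $L^2$-norm is controlled by $\|r\|_{H^s}$, and the argument closes in the same fashion.

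The main obstacle is the borderline nature of $I_1$: the natural Hölder exponent is exactly $q=2$, which is the $L^2\to L^2$ endpoint for the restricted Cauchy kernel and is not directly available from pointwise Hölder. The hypothesis $s>1/2$ provides the crucial slack $4/(3-2s)>2$, which lets me pick $q$ just above the endpoint while preserving the sharp rate up to a constant. Once the pointwise integral bound is established, continuity $JH\in C^0(\mathbb{C})$ follows by dominated convergence: $W^{(3)}$ lies in $L^p$ for some $p<2$ with rapid Gaussian decay, so the solid Cauchy transform is continuous, completing the proof of the lemma.
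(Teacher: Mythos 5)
Your overall strategy --- reducing the operator norm to a uniform-in-$z$ bound on $\int_{\Omega_{21}}|W(\varsigma)|\,|\varsigma-z|^{-1}dA(\varsigma)$ and splitting according to the three terms of Proposition \ref{pr31} --- is the paper's, and your treatment of the two mollifier terms ($\partial_{\mathrm{Re}\varsigma}\mathbf{r}$ and $v^{-1}\psi_{v}*r$) by Cauchy--Schwarz in $u$ for fixed $v$, using $\|\partial_u\mathbf{r}(\cdot,v)\|_{L^2_u}\lesssim v^{s-1}\|r\|_{H^s}$ and $\hat\psi(0)=0$, matches the paper's. The gap is in your $I_1$ (the $|\varsigma-z_2|^{s-3/2}$ term). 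The two-dimensional H\"older splitting
$$I_1\leq \bigl\||\varsigma-z|^{-1}\bigr\|_{L^p(\Omega_{21})}\cdot\bigl\||\varsigma-z_2|^{s-3/2}e^{-ct\rho^2\sin(2\phi)}\bigr\|_{L^q(\Omega_{21})}$$
cannot work as stated: $\Omega_{21}$ is an unbounded sector, and $\int_{\Omega_{21}}|\varsigma-z|^{-p}dA$ needs $p<2$ for integrability at $\varsigma=z$ but $p>2$ for integrability at infinity, so the first factor is $+\infty$ for \emph{every} $p$; the claim that it ``is bounded uniformly in $z$ because $p<2$'' is false. Nor can you localize using the phase, since $e^{-ctuv}$ degenerates along the boundary ray $v=0$ and provides no decay in $u$ there. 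Moreover, even granting a finite first factor, your own computation gives the rate $t^{(3/2-s)/2-1/q}$ with $q$ strictly above the endpoint $2$, and the resulting loss $t^{\varepsilon}$ cannot be ``absorbed into $C(\lambda_0)$'': the bound is needed for $t\to\infty$, where $t^{(1-2s)/4+\varepsilon}\gg t^{(1-2s)/4}$, so the advertised rate is genuinely lost.

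The paper's remedy is to apply H\"older only in the one-dimensional $u$-integral at each fixed $v=\mathrm{Im}(\varsigma-z_2)$: there both factors are finite, namely $\||\varsigma-z|^{-1}\|_{L^q_u(v,\infty)}\leq C|v-\beta|^{1/q-1}$ and $\||\varsigma-z_2|^{s-3/2}\|_{L^p_u(v,\infty)}\leq Cv^{(2s-3)/2+1/p}$; since $1/p+1/q=1$ the powers of $v$ recombine to $(2s-3)/2$, and the remaining $v$-integral against the retained Gaussian $e^{-ctv^2}$ (legitimate because $u\geq v$ in the sector, so $e^{-ctuv}\leq e^{-ctv^2}$) yields exactly $\int_0^\infty e^{-ctv^2}v^{(2s-3)/2}dv\leq Ct^{\frac{1-2s}{4}}$, with no endpoint obstruction and no $\varepsilon$-loss. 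You should restructure your $I_1$ along these lines; the rest of your argument, including the continuity of $JH$ by dominated convergence, is consistent with the paper.
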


	\begin{proof}
For convenience of expression the proof below only consider the case when $z=z_2$ and from the defination of the operator of $J$ we suppose $H\in L^\infty(\Omega_{21})$ then
		\begin{equation}
			|JH(z)|\leq||H||_{L^\infty(\Omega_{21}})||\delta^{-2}||_{L^\infty(\Omega_{21})}\int_{\Omega_{21}}\frac{|\overline\partial R_{21}(\varsigma)e^{2it\theta}|}{|\varsigma-z|}dA(\varsigma),
		\end{equation}
		and from the properties of $\delta(z)$ we acquire that $||\delta^{-2}||_{L^\infty(\Omega_{21})}\leq1$.
		For $j=1,2,3$ we have the bound of $I_j$ from (\ref{eq:3.19}) following
		\begin{align}\nonumber
			&I_j=\int_{\Omega_{21}}\frac{|X_j(\zeta)e^{2it\theta}|}{|\varsigma-z|}dA(\varsigma),\qquad X_1(z):=\partial_{\mbox{Re} z}\mathbf{r}(z),\\\label{eq:3.45}
			&X_2(z):=||r||_{H^s(\mathbb{R})}|z-z_2|^{s-\frac{3}{2}},\quad X_3(z):=(\mbox{Im}z)^{-1}\psi_{\mbox{Im}z}*r(\mbox{Re}z).
		\end{align}
		Following the prove in Section 2.4  \cite{27} and recall the expression of $z_2$ we set $\varsigma-z_2=u+iv$ and $z-z_2=\alpha+i\beta$, the region $\Omega_{21}$ corresponds to $u\ge v\ge 0$ we have
		\begin{align}\nonumber
			I_1&=\int_{\Omega_{21}}\frac{|\partial_u\mathbf{r}(\varsigma)|e^{-8t(\alpha+6\beta z_2)uv}}{|\varsigma-z|}dudv\lesssim\int_0^\infty dv\int_v^\infty\frac{|\partial_u\mathbf{r}(\varsigma)|e^{-8tuv}e^{-tz_2uv}}{|\varsigma-z|}du\\\label{eq:3.46}
			&\lesssim\int_0^\infty dve^{-8tv^2}e^{-tz_2v^2}||\partial_u\mathbf{r}(u,v)||_{L_u^2(v,\infty)}||((u-\alpha)^2+(v-\beta)^2))^{-1}||_{L_u^2(v,\infty)}.
		\end{align}
		It is easy to check the relationship $||((u-\alpha)^2+(v-\beta)^2)^{-1}||_{L_u^2(v,\infty)}\leq C|v-\beta|^{-\frac{1}{2}}$ is achieved. For fixed $C$ by using the Plancherel we obtain
		\begin{align}\nonumber
			||\partial_u\mathbf{r}(u,v)||_{L_u^2}&=||\partial_u\int_{\mathbb{R}}v^{-1}\chi(v^{-1}(u-t))r(t)dt||_{L_u^2}=||\xi\hat{\chi}(v\xi)\hat{r}(\xi)||_{L^2}\\\label{eq:3.47}
			&\leq v^{s-1}||\xi^{1-s}\hat{\chi}(\xi)||_{L^\infty}||r||_{H^s}\leq Cv^{s-1}||r||_{H^s}.
		\end{align}
	Direct calculation yields
		\begin{align}\nonumber
			I_1&\lesssim||r||_{H^s}\int_{\mathbb{R}}dve^{-8tv^2}e^{-tz_2v^2}|v|^{s-1}|v-\beta|^{-\frac{1}{2}}\\\nonumber
			&\lesssim||r||_{H^s}\int_{\mathbb{R}}dve^{-t(1+z_2)v^2}|v|^{s-1}|v-\beta|^{-\frac{1}{2}}\\\nonumber
			&\lesssim ((1+z_2)t)^{\frac{1-2s}{4}}||r||_{H^s}\int_{\mathbb{R}} dve^{-v^2}(|v|^{s-\frac{3}{2}}+|v-\sqrt{t}\beta|^{s-\frac{3}{2}})\\
			&\lesssim(\int_{\mathbb{R}}e^{-v^2}|v|^{s-\frac{3}{2}}dv)||r||_{H^s}t^{\frac{1-2s}{4}}.
		\end{align}
		For the last inequality we used the fact that for any $c\in\mathbb{R}$
		\begin{align}\nonumber
			&\int_{\mathbb{R}}e^{-v^2}|v-c|^{s-\frac{3}{2}}dv\leq\int_{|v|\leq|v-c|}e^{-v^2}|v|^{s-\frac{3}{2}}dv\\\label{eq:3.49}
			&+\int_{|v|\ge|v-c|}e^{-(v-c)^2}|v-c|^{s-\frac{3}{2}}dv\leq2\int_{\mathbb{R}}e^{-v^2}|v|^{s-\frac{3}{2}}dv.
		\end{align}
		The estimate for $I_3$ is similar after replacing (\ref{eq:3.47}) with
		\begin{align}\nonumber
			||v^{-2}\int&\psi(v^{-1}(u-t))r(t)dt||_{L_u^2}=||v^{-1}\xi^{-s}\hat{\psi}(v\xi)\xi^s\hat{r}(\xi)||_{L^2}\\
			&\leq v^{s-1}||\xi^{-s}\hat{\psi}(\xi)||_{L^\infty}||r||_{H^s}\leq Cv^{s-1}||r||_{H^s},
		\end{align}
		the schwartz function $\hat{\psi}$ with $\hat{\psi}(0)=0$ lead to the latter bound in above inequality. In the similar method we estimate the $I_2$ as
		\begin{align}\label{eq:3.51}
			I_2\lesssim\int_0^\infty e^{-8tv^2}e^{-tz_2v^2}dv|||\varsigma-z_2|^{s-\frac{3}{2}}||_{L^p(v,\infty)}|||\varsigma-z|^{-1}||_{L^q(v,\infty)},
		\end{align}
		where$\frac{1}{p}+\frac{1}{q}=1$. By \cite{17} we get
		\begin{equation}\label{eq:3.52}
			|||\varsigma-z|^{-1}||_{L^q(v,\infty)}\leq C|v-\beta|^{\frac{1}{q}-1},
		\end{equation}
		and
		\begin{align}\nonumber
			&|||\varsigma-z_2|^{s-\frac{3}{2}}||_{L^p(v,\infty)}=\big(\int_v^\infty|u+iv|^{p(s-\frac{3}{2})}du\big)^{\frac{1}{p}}\\
			&=\big(\int_v^\infty(u^2+v^2)^{p\frac{2s-3}{4}}du\big)^{\frac{1}{p}}
			=v^{\frac{2s-3}{2}+\frac{1}{p}}\big(\int_v^\infty(u^2+1)^{p\frac{2s-3}{4}}du\big)^{\frac{1}{p}}.
		\end{align}
		So by(\ref{eq:3.51}) and using again (\ref{eq:3.49}), we obtain
		\begin{align}\nonumber
			I_2&\lesssim\int_0^\infty e^{-8tv^2}e^{-tz_2v^2}v^{\frac{2s-3}{2}+\frac{1}{p}}|v-\beta|^{\frac{1}{q}-1}dv\\
			&\lesssim\int_0^\infty e^{-8tv^2}e^{-tz_2v^2}v^{\frac{2s-3}{2}}dv\leq Ct^{\frac{1-2s}{4}}.
		\end{align}
		And by above estimates using standard facts, like dominated convergence we can proof the $J(L^\infty)\subset C^0$ and the readers can prove it for themselves.
\end{proof}
	
This lemma implies that    the integral equation (\ref{pieee}) has an unique solution, and we further  obtain the following result.
	\begin{proposition}\label{po32}There exists $\epsilon_0>0$ such that for $||r||_{H^s(\mathbb{R})\cap L^{2,s}(\mathbb{R})}<\epsilon$ there exist constants $T$ and $c$ such that for $t\ge T$ and for $z\in\Omega_{j2}\cup\Omega_{j5}$
		\begin{align}
			&M^{(3)}(z)=I+  M^{(3)}_1 z^{-1} +\mathcal{O}(z^{-2}),\label{eq:3.55}
		\end{align}
where
		\begin{align}
			&|M^{(3)}_1|\leq c||q_0||_{H^{1}(\mathbb{R})\cap L^{2,s}(\mathbb{R})}t^{-\frac{2s+1}{4}},\quad\mbox{for}\quad t\ge T. \label{eq:3.48}
		\end{align}
	\end{proposition}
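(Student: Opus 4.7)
The plan is to extract $M^{(3)}_1$ directly from the Cauchy integral representation (\ref{pieee}) and then bound the resulting area integral of $|W^{(3)}|$ using the pointwise bound on $\overline\partial\mathcal{R}^{(2)}$ from Proposition \ref{pr31}. First, by the preceding lemma I choose $T=T(\lambda_0)$ so large that $\|J\|_{L^\infty\to L^\infty}\leq 1/2$ for all $t\geq T$; hence $I-J$ is Neumann-invertible on $L^\infty(\mathbb{C})$, and (\ref{pieee}) has a unique solution $M^{(3)}\in I+L^\infty(\mathbb{C})$ with $\|M^{(3)}\|_{L^\infty(\mathbb{C})}\leq 2$.

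Restricting $z$ to $\Omega_2\cup\Omega_5$, where $\overline\partial\mathcal{R}^{(2)}\equiv 0$ so that $M^{(3)}$ is analytic, and letting $|z|\to\infty$ inside this sector, I expand $(\varsigma-z)^{-1}=-z^{-1}-\varsigma z^{-2}+\mathcal{O}(|\varsigma|^{2}/|z|^{3})$ inside (\ref{pieee}). Since $|W^{(3)}|$ has global Gaussian decay in the lens regions (from the phase $e^{2it\theta}$ combined with the uniform bound on $M^{rhp}$ and $(M^{rhp})^{-1}$ provided by Proposition \ref{prop34} and the parabolic cylinder model), all the required moments $\int|\varsigma|^k|W^{(3)}|\,dA$ are finite. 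This identifies
\begin{equation*}
M^{(3)}_1=-\frac{1}{2\pi i}\int_{\mathbb{C}}M^{(3)}(\varsigma)W^{(3)}(\varsigma)\,dA(\varsigma),
\end{equation*}
with an $\mathcal{O}(z^{-2})$ remainder, so that the estimate reduces to bounding $\int_{\mathbb{C}}|\overline\partial\mathcal{R}^{(2)}|\,dA$.

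The area integral splits over the eight lens regions $\Omega_{ji}$ ($j\in\{1,2\}$, $i\in\{1,3,4,6\}$). On each one I switch to local coordinates $\varsigma-z_j=u+iv$, use the sharp phase estimate $|e^{2it\theta(\varsigma)}|\leq e^{-ct|uv|}$ with $c\propto|\alpha+6\beta z_j|$, and invoke the three-term pointwise bound (\ref{eq:3.19}). For the most singular (``$X_2$'') term $\|r\|_{H^s\cap L^{2,1}}|\varsigma-z_j|^{s-3/2}$, polar coordinates $u=r\cos\phi$, $v=r\sin\phi$ on $\phi\in[0,\pi/4]$ followed by the rescaling $\rho=r\sqrt{(ct/2)\sin 2\phi}$ give
\begin{equation*}
\int_{\Omega_{j1}}|\varsigma-z_j|^{s-3/2}e^{-ct|uv|}\,dA \lesssim t^{-(2s+1)/4}\int_0^{\pi/4}(\sin 2\phi)^{-(2s+1)/4}\,d\phi.
\end{equation*}
For the ``$X_1$'' term $|\partial_{\mathrm{Re}\,z}\mathbf{r}|$, Cauchy--Schwarz in $u$ combined with the mollifier estimate (\ref{eq:3.47}) gives an integrand bounded by $\|r\|_{H^s}v^{s-1}(tv)^{-1/2}e^{-ctv^2}$, whose $v$-integral rescales to $t^{-(2s+1)/4}$. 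The ``$X_3$'' term is handled identically through the analogous $L^2$ bound for $v^{-1}\psi_v*r$, where the cancellation $\hat\psi(0)=0$ produces the same $v^{s-1}$ factor. Summing over $j,i$ and converting $\|r\|_{H^s\cap L^{2,1}}\leq C\|q_0\|_{H^1\cap L^{2,s}}$ by Lemma \ref{le22} yields (\ref{eq:3.48}).

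The main obstacle is the $X_2$ integral: the factor $|\varsigma-z_j|^{s-3/2}$ is non-integrable near the stationary point, so the Gaussian damping from the phase must be used in tandem with the polar change of variables to absorb the singularity. The endpoint $s\in(1/2,1]$ is used in two distinct ways: $s>1/2$ is needed both for the H\"older estimate (\ref{eq:3.14}) and for the convergence at $v=0$ of the radial integral $\int_0^\infty v^{s-3/2}e^{-ctv^2}dv$ arising in the $X_1$ and $X_3$ bounds, while $s\leq 1$ keeps the angular integral $\int_0^{\pi/4}(\sin 2\phi)^{-(2s+1)/4}d\phi$ finite (the relevant threshold being $s<3/2$), which is precisely where the upper endpoint of the admissible range becomes essential.
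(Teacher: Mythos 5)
Your proposal is correct and follows essentially the same route as the paper: extract $M^{(3)}_1$ as the area integral of $M^{(3)}W^{(3)}$ from the Cauchy representation (\ref{pieee}), control $\|M^{(3)}\|_{L^\infty}$ via the Neumann series for $I-J$, and then estimate $\int|W^{(3)}|\,dA$ over each lens by splitting into the three terms $X_1,X_2,X_3$ of (\ref{eq:3.19}), with the $X_1$ and $X_3$ pieces handled exactly as in the paper (Cauchy--Schwarz in $u$ plus the mollifier bounds). The only deviation is cosmetic: for the $X_2$ term you use polar coordinates and a rescaling where the paper uses a H\"older $L^p$--$L^q$ splitting in $u$, and both yield the same rate $t^{-(2s+1)/4}$; your closing remark that $s\le 1$ is where the angular integral becomes essential slightly overstates matters (the true threshold there is $s<3/2$, as you yourself note), but this does not affect the proof.
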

	\begin{proof}In general we just consider the case when $j=2,i=1$. According to $M^{(3)}_1=\frac{1}{\pi}\int_\mathbb{C}M^{(3)}WdA$ we acquire that
$$|M^{(3)}_1|\leq\frac{||M^{(3)}||_{\infty}}{\pi}\int_{\Omega_{21}}|W|dA.$$
By  using
		\begin{align}\nonumber
			||e^{-8t(\alpha+6\beta z_2)uv}||_{L_u^q(v,\infty)}& =(8qt(\alpha+6\beta z_2)v)^{-\frac{1}{q}}e^{-8qt(\alpha+6\beta z_2)v^2}, \nonumber
		\end{align}
		We obtain
		\begin{align}\nonumber
			\int&_{\Omega_{21}}|X_1(\zeta)e^{2it\theta}|dA\leq||r||_{H^s\cap L^{2,1}}\int_0^\infty v^{s-1}||e^{-8t(\alpha+6\beta z_2)uv}||_{L_u^2(v,\infty)}dv\\
			&\lesssim t^{-\frac{1}{2}}\int_0^\infty v^{s-\frac{3}{2}}e^{-t(\alpha+6\beta z_2)v^2}dv||r||_{H^s\cap L^{2,1}}=C_st^{-\frac{2s+1}{4}}||r||_{H^s\cap L^{2,1}}.\nonumber
		\end{align}
		For $l=2$, we get
		\begin{align}\nonumber
			&\int_{\Omega_{21}}|X_2e^{2it\theta}|dA \leq||r||_{H^s\cap L^{2,1}}\int_0^\infty|||\varsigma-z_2|^{s-\frac{3}{2}}||_{L^p(v,\infty)}||e^{-8t(\alpha+6\beta z_2)uv}||_{L^q_u(v,\infty)}dv\\\label{eq:3.57}
			&\leq C||r||_{H^s\cap L^{2,1}}t^{-\frac{1}{q}}\int_0^\infty v^{\frac{2s-3}{2}+\frac{1}{p}-\frac{1}{q}}e^{-t(\alpha+6\beta z_2)v^2}dv\leq C_st^{-\frac{2s+1}{4}}||r||_{H^s\cap L^{2,1}}. \nonumber
		\end{align}
		Now we acquire (\ref{eq:3.55}) by  the Lipschitz continuous of Lemma \ref{le22}.
	\end{proof}
	
	\begin{theorem}\label{th31}
		Fix $s\in(\frac{1}{2},1]$ and let $q_0\in H^{1}(\mathbb{R})\cap L^{2,s}(\mathbb{R})\cap\mathcal{G}_0$. Then there exist constants $C(q_0)>0$ and $T(q_0)>0$ such that the solution of the Hirota equation (\ref{eq:1.1}) satisfies
		\begin{equation}
			||q(t,\cdot)||_{L^\infty(\mathbb{R})}\leq C(q_0)t^{-\frac{1}{2}},\,\,\mbox{for all} \,\, |t|\geq T(q_0).
		\end{equation}
		There are further constants $C_0>0,T_0>0$ and small $\epsilon>0$ such that for $||q_0||_{H^{1}(\mathbb{R})\cap L^{2,s}(\mathbb{R})}<\epsilon$, we can take $C(q_0)=C_0||q_0||_{H^{1}(\mathbb{R})\cap L^{2,s}(\mathbb{R})}$ and $T(u_0)=T_0$.
	\end{theorem}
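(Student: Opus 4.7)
The plan is to read off the long-time asymptotics of $q(x,t)$ from the reconstruction formula~(\ref{eq:2.28}) by unwinding the chain of transformations $M\to M^{(1)}\to M^{(2)}\to(M^{(3)},M^{rhp})$ built in this section and taking large-$z$ expansions of each factor. Since $\mathcal{R}^{(2)}\equiv I$ on the sectors $\Omega_2\cup\Omega_5$, which both contain a neighborhood of $z=\infty$, the inverse transformations $M=M^{(1)}\delta^{\sigma_3}$, $M^{(1)}=M^{(2)}(\mathcal{R}^{(2)})^{-1}$, and $M^{(2)}=M^{(3)}M^{rhp}$ combine, for $|z|$ sufficiently large, to
\[
M(z)=M^{(3)}(z)\,M^{rhp}(z)\,\delta^{\sigma_3}(z).
\]
Expanding each factor as $I+(\cdot)\,z^{-1}+\mathcal{O}(z^{-2})$ and using that $\delta^{\sigma_3}$ is diagonal (so contributes nothing off-diagonal at any order), formula~(\ref{eq:2.28}) gives
\[
q(x,t)=2i\bigl[(M^{(3)}_1)_{12}+(M^{rhp}_1)_{12}\bigr],
\]
where the subscript $1$ denotes the $z^{-1}$ coefficient in the expansion at infinity.

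The next step is to estimate these two off-diagonal coefficients. For $(M^{rhp}_1)_{12}$, Proposition~\ref{prop34} replaces $M^{rhp}$ by $M^{pc}(\zeta_1)+M^{pc}(\zeta_2)$ up to an $\mathcal{O}(e^{-ct})$ error. Each parabolic cylinder factor carries a standard large-$\zeta$ expansion $I+A^{pc}/\zeta+\mathcal{O}(\zeta^{-2})$ whose off-diagonal entry is explicitly proportional to $r(z_j)$, and the rescaling $\zeta_j=\sqrt{-8t(\alpha+6\beta z_j)}(z-z_j)$ converts this into a $z^{-1}$ term of order $t^{-1/2}$. Therefore $|(M^{rhp}_1)_{12}|\lesssim t^{-1/2}(|r(z_1)|+|r(z_2)|)+\mathcal{O}(e^{-ct})$. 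For $M^{(3)}_1$, Proposition~\ref{po32} has already proved $|M^{(3)}_1|\leq c\,\|q_0\|_{H^1(\mathbb{R})\cap L^{2,s}(\mathbb{R})}\,t^{-(2s+1)/4}$ for $t\geq T$. Since $s>1/2$ yields $(2s+1)/4>1/2$, this contribution is strictly subleading, and the parabolic cylinder piece dominates, yielding $\|q(\cdot,t)\|_{L^\infty}\leq C(q_0)\,t^{-1/2}$ for $|t|\geq T(q_0)$. The small-data refinement is obtained by tracking linear dependence: Lemma~\ref{le22} is locally Lipschitz and maps $q_0\mapsto r$, so $|r(z_j)|\leq\|r\|_{L^\infty}\leq C\|q_0\|_{H^1(\mathbb{R})\cap L^{2,s}(\mathbb{R})}$ (using $H^s\hookrightarrow L^\infty$ for $s>1/2$), which linearizes the parabolic cylinder bound; the $\overline\partial$-estimate is already linear in $\|q_0\|_{H^1\cap L^{2,s}}$. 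These combine to give $C(q_0)=C_0\|q_0\|_{H^1\cap L^{2,s}}$ with a uniform threshold $T_0$.

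The main obstacle I anticipate is uniformity in $x$. Both $z_1(x/t),z_2(x/t)$ and the rescaling $\sqrt{-8t(\alpha+6\beta z_j)}$ depend on $x/t$, and they degenerate precisely when $\alpha^2-3\beta x/t\to 0^+$, where the two stationary points coalesce. To obtain a bound uniform over all $x\in\mathbb{R}$, I would split into three regimes: (i) $x/t$ in a compact subset of $\{x/t<\alpha^2/(3\beta)\}$ where the parabolic cylinder analysis applies with uniform constants; (ii) $x/t$ outside the real stationary-phase region, where contour deformation produces exponential decay and the $t^{-1/2}$ bound is automatic; and (iii) a shrinking neighborhood of the coalescence point, where either a Painlev\'e~II-type model (as in~\cite{XF}) or a slightly weakened estimate whose implicit constant grows but whose rate $t^{-1/2}$ is preserved is required. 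Once this case analysis is performed, the remaining work is bookkeeping within the framework already developed in this section.
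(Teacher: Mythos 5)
Your proposal follows essentially the same route as the paper: unwind the chain $M=M^{(3)}M^{rhp}(\mathcal{R}^{(2)})^{-1}\delta^{\sigma_3}$ near $z=\infty$ where $\mathcal{R}^{(2)}=I$, read off $q=2i[(M^{(3)}_1)_{12}+(M^{rhp}_1)_{12}]$ from the reconstruction formula, get the leading $t^{-1/2}$ from the parabolic cylinder coefficients via the rescaling $\zeta_j=\sqrt{(-1)^j8t(\alpha+6\beta z_j)}(z-z_j)$, and absorb the $\overline\partial$ contribution using Proposition \ref{po32} since $(2s+1)/4>1/2$. Your closing concern about uniformity in $x$ when the stationary points $z_1,z_2$ coalesce is a legitimate point that the paper's (very terse) proof does not address at all, but it is a refinement of, not a departure from, the paper's argument.
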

	\begin{proof}
Recalling a series   of transformations (\ref{trns1}), (\ref{eq:3.15}) and  (\ref{trans2}),   we have
$$M(z) =M^{(3)}(z)M^{rhp}  (z) R^{(2)}(z)^{-1} \delta^{\sigma_3}.$$
Taking the limit $z\to \infty$ along $R^{(2)}(z)=I$ leads to
		$$M_1 =M^{(3)}_1 + \sum_{j=1}^2 \frac{M^{pc}_1}{\sqrt{(-1)^j8t(\alpha+6\beta z_j)}}+ \delta^{\sigma_3}.$$
		By using reconstruction formula (\ref{eq:2.28}), (\ref{eq:3.48})   for $t\ge T(s,\lambda_0)$ and a fixed $C=C(s,\lambda_0)$,  we have
		$$  |q(t,x)|\leq Ct^{-\frac{1}{2}},$$
		which  proves Theorem  \ref{th31}  for $q_0\in H^{1}(\mathbb{R})\cap L^{2,s}(\mathbb{R})\cap\mathcal{G}_0$.
	\end{proof}

	\section{The asymptotic stability of  the solitons }\label{sec4}
		
	\subsection{The B\"acklund transformation}
	
	  In this section we consider scattering data $\{r\equiv0,\{(z_k,c_k)_{k=1}^N\}$ for which the reflection coefficient vanishes identically correspond to $N$-soliton solution of (\ref{eq:1.1}).
Especially when $N=1$, the single soliton is given by (\ref{eq:1.3}), which  is a localized pulse with speed $v=-(12\beta\xi^2-4\beta\eta^2+4\alpha\eta^2)$ and maximum amplitude $2\eta$. Since $\mathcal{G}_1$ is an open subset of $L^1(\mathbb{R})$ and the soliton (\ref{eq:1.3}) belong to it. If the value of $\epsilon_0>0$ in the bound (\ref{eq:1.4}) is small enough, then the initial datum $q_0$ belong to $\mathcal{G}_1$. Notice also that the positive constant $\epsilon_0$ can be taken independent of $(\eta_0,x_0)$.
	
	In this section we consider the initial datum $q_0$ satisfying the bound $(\ref{eq:1.4})$ and the scattering datum associated with it belongs to the space $\mathcal{S}(1,1)$ defined in (\ref{eq:2.21}) is close to those of the soliton $q_{(\eta_0,\xi_0,\gamma_0))}(0,x)$ by Lemma \ref{le21} and Lemma \ref{le22}, we also obtain that when $q_0\in H^{1}(\mathbb{R})\cap L^{2,s}(\mathbb{R})$ implies $r\in H^s(\mathbb{R})\cap L^{2,1}(\mathbb{R})$. Furthermore, by the Lipschitz continuity of $q_0\to r$ and the fact that the soliton has $r\equiv0$,
	we have $||r||_{H^s(\mathbb{R})\cap L^{2,1}(\mathbb{R})} \leq C\epsilon$ with $C=C(\eta_0,\xi_0,\gamma_0)$ and the value of $\epsilon$ is given in (\ref{eq:1.4}).
	
	We define now a map
	\begin{equation}
		\mathcal{G}_1\times\mathbb{C}_+\times\mathbb{C}_*\ni(q_0,z_{s},c_1)\mapsto\tilde{q}_0\in\mathcal{G}_0,
	\end{equation}
	via the transformation
	\begin{equation}
		\tilde{r}(z):=r(z)\frac{z-z_{s}}{z-z^*_{s}}.
	\end{equation}
	From its definition, we acquire that $\tilde{r}\in H^s(\mathbb{R})\cap L^{2,1}(\mathbb{R})$ if $r\in H^s(\mathbb{R})\cap L^{2,1}(\mathbb{R})$ and there exist a constant $C>0$ such that $||\tilde{r}||_{H^s(\mathbb{R})\cap L^{2,1}(\mathbb{R})}\leq C||r||_{H^s(\mathbb{R})\cap L^{2,1}(\mathbb{R})}$. Then we define $\tilde{q}\in\mathcal{G}_0\,\cap\,H^{1}\,\cap\,L^{2,s}(\mathbb{R})$ by the reconstruction formula (\ref{eq:2.28}) with the corresponding RH problem is solved for the scattering datum in $\mathcal{S}(1,0)=\{\tilde{r}\in H^s(\mathbb{R})\cap L^{2,1}(\mathbb{R})\}$. By Lemma \ref{le24} we know that $\tilde{q_0}\in\mathcal{G}_0\,\cap H^s(\mathbb{R})\cap L^{2,1}(\mathbb{R})$ with norm $||\tilde{q}_0||_{H^1(\mathbb{R})\cap L^{2,s}(\mathbb{R})}\leq C||\tilde{r}||_{H^s(\mathbb{R})\cap L^{2,1}(\mathbb{R})}\leq C\epsilon$.
	
	The BT  technique  is a method that can derive multisoliton solution starting from  a  trivial seed solution  in a purely algebraic procedure for  a  integrable equation. Main feature of BT is that the Lax pair associated with the integrable equation  remains covariant under the gauge transformation.
	
	\begin{lemma}\label{le41}
 Define the BT that  generates a new solution of the Hirota equation as
		\begin{equation}\label{eq:4.3}
			\tilde{q}=q-\mathbf{B},\quad\mathbf{B}:=2i(z_{s}-z_{s}^*)\frac{f_1f^*_2}{|f_1|^2+|f_2|^2},
		\end{equation}
		where
		$$f_1:=e^{-ixz_s}m_{11}(t,x,z_{s})-\frac{c_1m_{12}(t,x;z_s)e^{ixz_{s}+i(4\beta z_{s}^3+2\alpha z_{s}^2)t}}{2i\mbox{Im}(z_s)},$$
		$$f_2:=e^{-ixz_s}m_{21}(t,x,z_{s})-\frac{c_1m_{22}(t,x,z_s)e^{ixz_{s}+i(4\beta z_{s}^3+2\alpha z_s^2)t}}{2i\mbox{Im}(z_s)}.$$
		The proof will be given in the  \ref{AB}.
	\end{lemma}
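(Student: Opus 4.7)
The plan is to realize the Bäcklund transformation as a Darboux-type gauge transformation $\tilde\Psi(x,t;z)=D(x,t;z)\Psi(x,t;z)$ intertwining the Lax pairs (\ref{eq:2.1})--(\ref{eq:2.2}) associated with $q$ and $\tilde q$. Motivated by the scattering-side relation $\tilde r(z)=r(z)(z-z_s)/(z-z_s^*)$, which removes the pole of $M$ at $z_s$ at the cost of introducing a regular factor $(z-z_s^*)^{-1}$, the natural ansatz is the rational matrix
\begin{equation*}
D(x,t;z)=I+\frac{A(x,t)}{z-z_s^*},
\end{equation*}
with $A$ rank one. The $\sigma_2$-symmetry $\Phi(z)=\sigma_2\Phi^*(z^*)\sigma_2$ of the Lax pair forces $A$ to be Hermitian (up to the $\sigma_2$ twist), so it is determined by a single vector and a scalar normalization, matching the discrete data $(z_s,c_1)$.

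First I would expand $\tilde\Psi_x=\tilde M\tilde\Psi$ at large $z$. The $O(1)$ term is automatic, while the $O(z^{-1})$ coefficient produces the reconstruction identity $\tilde Q-Q=i[\sigma_3,A]$, whose $(1,2)$-entry reads $\tilde q-q=-2iA_{12}$. This localizes $\mathbf{B}$ as essentially the $(1,2)$-component of $A$. Next I would fix $A$ by the pole-removal condition at $z_s$: on the $M$ side the residue relation in RHP~\ref{RH21} forces the relevant column of $\Phi(x,t;z_s)$ to be aligned with a specific direction tied to the norming constant, and this is precisely the vector $f(x,t):=(f_1,f_2)^{T}$ appearing in the statement, once the time-oscillation $c_1\mapsto c_1 e^{i(4\beta z_s^3+2\alpha z_s^2)t}$ is included. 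Requiring that $D(z)\Phi(z)$ be regular at $z_s$ (so that the transformed problem has no discrete spectrum there) translates into $D(z_s)f=0$, which pins down
\begin{equation*}
A(x,t)=-(z_s-z_s^*)\,\frac{f(x,t)\,f(x,t)^{*}}{|f_1|^2+|f_2|^2}.
\end{equation*}
Reading off $A_{12}=-(z_s-z_s^*)\,f_1 f_2^{*}/(|f_1|^2+|f_2|^2)$ and substituting into $\tilde q-q=-2iA_{12}$ reproduces the formula (\ref{eq:4.3}) exactly, and the $\sigma_2$-symmetry of the ansatz ensures $\tilde q$ lies in the correct focusing reduction class.

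The main obstacle is the verification that the same $D(x,t;z)$ intertwines the $t$-part $\Psi_t=N\Psi$, i.e.\ $D_t+DN=\tilde N D$. Since $N(x,t;z)$ is a cubic polynomial in $z$, matching at $z\to\infty$ produces four polynomial identities and one residue identity at $z=z_s^*$; one must show that the single rank-one ansatz for $A$ satisfies all of them simultaneously. I would reduce this to the statement that $f(x,t)$ evolves along the Lax flow at $z=z_s$, i.e.\ $f_x=M(x,t;z_s)f$ and $f_t=N(x,t;z_s)f$, which follows from the explicit definitions of $f_1,f_2$ together with the prescribed time-dependence of $c_1$ in $\mathcal{D}(t)$. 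Once this compatibility is in hand, $\tilde q$ automatically solves the Hirota equation (\ref{eq:1.1}), completing the construction of the BT. The remaining estimates needed for Lemma \ref{le43} then reduce to bounding the scalar $\mathbf{B}$ via the asymptotic behavior of $m(x,t;z_s)$ obtained from the results of Section~\ref{sec3}.
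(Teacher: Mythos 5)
Your proposal is correct and follows essentially the same route as the paper's \ref{AB}: a Darboux dressing of the Lax pair whose kernel at $z=z_s$ is spanned by the vector $f=(f_1,f_2)^T$ (the Jost column twisted by the norming constant), with $\tilde q-q$ read off from the $O(z^{-1})$ term of the dressed eigenfunction. The only real difference is the parametrization of the dressing factor: you take the normalized form $D(z)=I+A/(z-z_s^*)$ with $A$ a rank-one multiple of the orthogonal projector onto $f$, so that $\det D=(z-z_s)/(z-z_s^*)$ and $D(z_s)f=0$, while the paper (following Deift--Park) writes $\tilde\Psi=\mathfrak b\,\mu(z)\,\mathfrak b^{-1}\,\Psi\,\mu^{-1}(z)$ with $\mu(z)=\mathrm{diag}(z-z_s,\,z-z_s^*)$, i.e.\ a degree-one polynomial factor $z+P$ on the left compensated by a diagonal rational normalization on the right; the two are equivalent, and your version is the more economical one. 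Two minor points. First, with the paper's convention for $Q$ one finds $[\sigma_3,A]_{12}=2A_{12}$ and hence $\tilde q-q=+2iA_{12}=-\mathbf B$, so your intermediate sign $\tilde q-q=-2iA_{12}$ does not cohere with your $A_{12}$ even though the final formula you claim agrees with (\ref{eq:4.3}); this is harmless, especially as the paper's own appendix lands on an expression differing from (\ref{eq:4.3}) by a factor of $-2$, so the conventions there are already loose. Second, your plan to verify the time-part intertwining $D_t+DN=\tilde N D$ by showing that $f$ flows under the Lax pair at $z=z_s$ addresses a step the paper's appendix omits entirely (it only treats the $x$-equation and lets time enter through $c_1(t)$), so on that point your outline is more complete than the written proof.
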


	\begin{remark}\label{re41}
 The soliton (\ref{eq:1.3}) can be  recovered with the BT  (\ref{eq:4.3})  by taking
		\begin{align}
	&	r=0,\quad z_s =\xi +i\eta, \quad  \tilde{q}=0,\nonumber\\
&f_1=e^{-ixz_s},\quad\mbox{and}\quad f_2=-\frac{c_1}{2i\eta }e^{ixz_{s}+i(4\beta z_{s}^3+2\alpha z_{s}^2)t}.\nonumber
		\end{align}
	\end{remark}

		\begin{lemma}\label{le34} Let $z_s=\alpha_1+i\beta_1$ with $\beta_1>0$. There is $\epsilon_0$ sufficiently small such that for $||q_0||_{H^{1}(\mathbb{R})\,\cap\,L^{2,s}(\mathbb{R})}<\epsilon_0$, there is a constant $C$ such that
		\begin{align}\nonumber
			&|I-V_+(z_s)|\leq Ce^{-t8\beta_1^2}||q_0||_{H^{1,s}(\mathbb{R})},\,\,\mbox{if}\,\, z_s\in\Omega_{j1}+z_j,\\\label{eq:3.52}
			&|I-U^{-1}_R(z_s)|\leq Ce^{-t8\beta_1^2}||q_0||_{H^{1,s}(\mathbb{R})},\,\,\mbox{if}\,\, z_s\in\Omega_{j3}+z_j.
		\end{align}
	\end{lemma}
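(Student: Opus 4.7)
The statement is really two estimates of the same flavour, one for each of the two off-line extensions of the jump matrix appearing in the mixed Riemann--Hilbert problem \ref{RH32}. My plan is to read off the relevant matrix entry from (\ref{eq:3.16})--(\ref{eq:3.26}) at the point $z_s$, control the algebraic prefactor by the $R_{ji}$-estimates of Proposition \ref{pr31} together with the reflection-coefficient bound of Lemma \ref{le22}, and then extract the exponential decay $e^{-8t\beta_{1}^{2}}$ from the sign of $\operatorname{Im}\theta(z_s)$ in the sector in which $z_s$ sits.

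\textbf{Step 1 (identify the entry).} For $z_s\in\Omega_{j1}+z_j$, the extension $\mathcal{R}^{(2)}$ defined in (\ref{eq:3.16}) makes
\[
I-V_{+}(z_s)\;=\;\begin{pmatrix}0&0\\-R_{j1}(z_s)\,e^{2it\theta(z_s)}\,\delta^{-2}(z_s)&0\end{pmatrix},
\]
and, symmetrically, for $z_s\in\Omega_{j3}+z_j$,
\[
I-U_{R}^{-1}(z_s)\;=\;\begin{pmatrix}0&R_{j3}(z_s)\,e^{-2it\theta(z_s)}\,\delta^{2}(z_s)\\0&0\end{pmatrix}.
\]
Consequently it suffices to bound the single scalar factor appearing in each of these matrices.

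\textbf{Step 2 (algebraic prefactor).} The extensions (\ref{eq:R1}) and (\ref{eq:R2}) together with Proposition \ref{pr31} give $|R_{j1}(z_s)|,|R_{j3}(z_s)|\lesssim \|r\|_{H^{s}(\mathbb{R})\cap L^{2,1}(\mathbb{R})}$. The scalar factor $\delta(z)$ satisfies $\langle\rho\rangle^{-1}\le|\delta(z)|\le\langle\rho\rangle$ uniformly for $z\notin[z_1,z_2]$, as recorded right after (\ref{eq:3.6}), so $|\delta^{\pm 2}(z_s)|\lesssim 1$. Finally, Lemma \ref{le22} translates the $H^{s}\cap L^{2,1}$ norm of $r$ into the $H^{1}\cap L^{2,s}$ norm of $q_0$, so the algebraic prefactor in Step 1 is bounded by $C\|q_0\|_{H^{1,s}(\mathbb{R})}$ once $\|q_0\|_{H^{1}\cap L^{2,s}}<\epsilon_{0}$.

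\textbf{Step 3 (exponential decay, the main point).} Write $z_s-z_j=u+iv$ with $v=\beta_{1}>0$. Since $\theta'(z_j)=0$, a second-order Taylor expansion at $z_j$ yields
\[
2it\theta(z_s)=2it\theta(z_j)+it\theta''(z_j)(z_s-z_j)^{2}+it\,O\bigl(|z_s-z_j|^{3}\bigr),
\]
and $\theta(z_j)\in\mathbb{R}$ because $z_j\in\mathbb{R}$, so $|e^{2it\theta(z_j)}|=1$. Using $\operatorname{Im}(z_s-z_j)^{2}=2uv$ and the computation $\theta''(z_j)=4\sqrt{\alpha^{2}-3\beta x/t}>0$ coming from (\ref{eq:2.4}) at the stationary point, one obtains
\[
\bigl|e^{2it\theta(z_s)}\bigr|=\exp\!\bigl(-2t\,\theta''(z_j)\,u v\bigr)\,(1+o(1)),
\qquad z_s\to z_j.
\]
In the sector $\Omega_{j1}$ (bounded by the real axis and the ray of argument $\pi/4$) one has $u\ge v=\beta_{1}>0$, hence $uv\ge \beta_{1}^{2}$, and one reads off
\[
\bigl|e^{2it\theta(z_s)}\bigr|\le e^{-8t\beta_{1}^{2}}
\]
after absorbing $\theta''(z_j)$ into the constant of the lemma. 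The analogous computation in $\Omega_{j3}$ (where $u\le -v<0$ forces $uv\le -\beta_{1}^{2}$) gives $|e^{-2it\theta(z_s)}|\le e^{-8t\beta_{1}^{2}}$, which is the factor we need for $I-U_{R}^{-1}(z_s)$.

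\textbf{Combining and the main obstacle.} Multiplying the two estimates yields (\ref{eq:3.52}) with $C=C(\rho,\eta_0,\xi_0)$, concluding the proof. The only genuinely delicate point is Step 3: one must check that the quadratic approximation controls $\operatorname{Im}\theta$ globally on the two sectors (not merely asymptotically near $z_j$) and that the effective constant in front of $\beta_{1}^{2}$ is strictly positive uniformly in $(x,t)$ on the ray where $z_1$, $z_2$ are defined. This is handled by writing $\theta(z_s)-\theta(z_j)=(z_s-z_j)^{2}\bigl(2\alpha+4\beta(z_j+\tfrac{1}{3}(z_s-z_j))+2\beta(z_s-z_j)\bigr)$ exactly (since $\theta$ is a cubic) and observing that the bracket has strictly positive real part on the sector; the restriction on $\epsilon_0$ and the geometry of $\Omega_{j1}$, $\Omega_{j3}$ then produce the desired clean exponential bound.
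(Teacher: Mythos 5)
Your proof is correct and follows essentially the same route as the paper's: bound $|R_{ji}(z_s)|$ by $\|r\|_{H^s\cap L^{2,1}}$ and hence by $\|q_0\|$ via Lemma \ref{le22}, use $|\delta^{\pm2}|\lesssim 1$, and extract $|e^{\pm 2it\theta(z_s)}|\le e^{-8t(\alpha+6\beta z_j)uv}\le e^{-8t\beta_1^2}$ from the sector geometry $u\ge v=\beta_1$ (resp.\ $u\le -v$), which is exactly the paper's three-line argument. Two cosmetic points: ``absorbing $\theta''(z_j)$ into the constant'' is loose since that factor sits in the exponent (the paper is equally cavalier here, implicitly assuming $\alpha+6\beta z_j\gtrsim 1$), and your exact cubic factorization at the end is miscopied --- it should read $\theta(z_s)-\theta(z_j)=(z_s-z_j)^2\bigl(2\alpha+12\beta z_j+4\beta(z_s-z_j)\bigr)$ --- though neither affects the conclusion.
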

	\begin{proof} For $j=2,i=1,3$ from (\ref{eq:3.13}) we obtain that
$$||R_i||_{L^\infty(\Omega_{2i}+z_2)}\leq C'||r||_{H^s(\mathbb{R})\cap L^{2,2}(\mathbb{R})}
		\leq C||q_0||_{L^{2,s}(\mathbb{R})\cap H^2(\mathbb{R})}.$$
 If $z_s\in\Omega_{21}+z_2$ we have $\alpha_1-z_2\ge\beta_1$ and so
 $$|e^{-2it\theta}|\lesssim e^{-8t(\alpha-z_2)\beta_1}\lesssim e^{-t8\beta_1^2}.$$
 If $z_s\in\Omega_{23}+z_2,$
we have similarly $|e^{2it\theta}|\leq e^{-t8\beta_1^2}$ which yield (\ref{eq:3.52}).\end{proof}
	By Theorem \ref{th31}, we know that there exist constants $C_0$ and $T>0$ such that for all $t>T$, we have
	$$||\tilde{q}(t,\cdot)||_{L^\infty(\mathbb{R})}\leq C_0\epsilon|t|^{-\frac{1}{2}}.$$
	Since there is a constant $C>0$ such that $||\tilde{q}_0||_{H^1(\mathbb{R})\,\cap\, L^{2,s}(\mathbb{R})}\leq C\epsilon$.
	In order to prove Theorem \ref{th1} we need to focus only on $\mathbf{B}$, we focusing on $t\gg1$ we know that
	
	\begin{lemma}\label{le33} Suppose that $||q_0||_{H^{1}(\mathbb{R})\,\cap\, L^{2,s}(\mathbb{R})}<\epsilon_0$ and $z_s\in\mathbb{C}_+$, then there are a $\epsilon_0>0$, a $c>0$ and a $T>0$ such that
		\begin{equation}
			|I-M^{(3)}(z_s)|\leq ct^{-\frac{2s+1}{4}}||q_0||_{H^{1}(\mathbb{R})\cap L^{2,s}(\mathbb{R})},\quad\mbox{for} \quad t\ge T.
		\end{equation}
	\end{lemma}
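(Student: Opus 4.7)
The plan is to mimic the analysis of Proposition \ref{po32} but evaluate the Cauchy--Pompeiu integral at the finite point $z_s$ instead of reading off the coefficient at infinity. Starting from the representation (\ref{pieee}) of the solution of the $\overline{\partial}$-problem in RH Problem \ref{RH33}, one has
\begin{equation*}
M^{(3)}(z_s) - I = \frac{1}{\pi}\int_{\mathbb{C}}\frac{M^{(3)}(\varsigma)\, W^{(3)}(\varsigma)}{\varsigma - z_s}\,dA(\varsigma).
\end{equation*}
For $t$ sufficiently large, the bound on $\|J\|_{L^\infty\to L^\infty}$ established immediately before Proposition \ref{po32}, together with a Neumann series argument, yields $\|M^{(3)}\|_{L^\infty(\mathbb{C})}\leq 2$. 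Using in addition the uniform bound on $M^{rhp}$ coming from Proposition \ref{prop34} and the boundedness of the parabolic cylinder model, the problem reduces to estimating
\begin{equation*}
\sum_{j,i}\int_{\Omega_{ji}}\frac{|\overline{\partial} R_{ji}(\varsigma)|\,|e^{\pm 2it\theta(\varsigma)}|}{|\varsigma - z_s|}\,dA(\varsigma).
\end{equation*}

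The next step is to insert the three-term pointwise bound on $|\overline{\partial} R_{ji}|$ from Proposition \ref{pr31} and to parametrize $\varsigma - z_j = u + iv$ as in the proof of Proposition \ref{po32}. For each of the three terms $X_k$ appearing in (\ref{eq:3.45}) I apply H\"older's inequality in $u$, pairing $\|X_k(\cdot,v)\|_{L^p_u}$ with $\|e^{-ctuv}/|\varsigma - z_s|\|_{L^q_u}$, with $\tfrac{1}{p}+\tfrac{1}{q}=1$ chosen as in (\ref{eq:3.51}). The Plancherel computation (\ref{eq:3.47}) (together with its analogue for $X_2$ and for $X_3$ via the Schwartz property of $\widehat{\psi}$ with $\widehat{\psi}(0)=0$) yields $\|X_k(\cdot,v)\|_{L^p_u}\lesssim v^{s-1}\|r\|_{H^s\cap L^{2,1}}$, while an extra factor $t^{-1/q}$ is gained from the Gaussian, and the kernel weight $|\varsigma - z_s|^{-1}$ contributes at most a manageable power of $|v - \mathrm{Im}\,z_s|$. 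Executing the outer $v$-integral exactly as in (\ref{eq:3.46})--(\ref{eq:3.49}) reproduces the decay rate $t^{-(2s+1)/4}$ obtained in (\ref{eq:3.48}).

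The main obstacle is that $z_s\in\mathbb{C}_+$ may lie inside one of the extension regions $\Omega_{ji}$, in which case $|\varsigma - z_s|^{-1}$ has a genuine singularity at $\varsigma = z_s$. I handle this by splitting each such region into $\Omega_{ji}\cap B(z_s,\rho)$ and its complement, with $\rho := \tfrac{1}{2}\,\mathrm{Im}(z_s)$. On the complement the trivial bound $|\varsigma - z_s|^{-1}\leq 2/\mathrm{Im}(z_s)$ renders the weight harmless and the H\"older argument of the previous paragraph runs exactly as stated. On the small disc the singularity is integrable in two dimensions, and the key observation already used in the proof of Lemma \ref{le34} is that $|e^{\pm 2it\theta(\varsigma)}|\lesssim e^{-ct\,\mathrm{Im}(z_s)^2}$ uniformly for $\varsigma\in B(z_s,\rho)$, so this contribution is exponentially small in $t$ and is absorbed into the claimed power decay.

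Finally, to convert $\|r\|_{H^s(\mathbb{R})\cap L^{2,1}(\mathbb{R})}$ into $\|q_0\|_{H^1(\mathbb{R})\cap L^{2,s}(\mathbb{R})}$, I invoke the local Lipschitz continuity of the forward scattering map from Lemma \ref{le22}, exactly as in the closing step of Proposition \ref{po32}. This yields the stated inequality for all $t\geq T$, with $\epsilon_0$ and $T$ depending only on the preassigned bound on $\|q_0\|_{H^1\cap L^{2,s}}$.
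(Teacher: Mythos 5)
Your proposal is correct and follows essentially the same route as the paper: represent $M^{(3)}(z_s)-I$ by the Cauchy--Pompeiu integral, insert the three-term bound on $\overline{\partial}R_{ji}$ from Proposition \ref{pr31}, and split the domain so that away from $z_s$ the kernel $|\varsigma-z_s|^{-1}$ is bounded by $C/\mathrm{Im}(z_s)$ (reducing to the computation of Proposition \ref{po32} and yielding $t^{-(2s+1)/4}$), while near $z_s$ the factor $e^{\pm 2it\theta}$ is exponentially small as in Lemma \ref{le34}. The paper performs the same split along the horizontal line $v=\tfrac{1}{2}\mathrm{Im}(z_s)$ rather than with a disc around $z_s$, but this is only a cosmetic difference.
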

	\begin{proof} Notice the function $M^{(3)}(z)$ satisfying the inequality with
		$$|M^{(3)}-I|\leq\frac{||M^{(3)}||_{\infty}}{\pi}\sum_i\int_{\Omega_{2i}}\frac{|W|}{|\zeta-z_s|}dA,$$
		and similarly as in Proposition \ref{po32} we just consider the case when $j=2,i=1$. We set $z_s=\alpha_1+i\beta_1$ and for $l=1,3$ and just like the case in the (\ref{eq:3.45}) and (\ref{eq:3.46}) we obtain
		\begin{align}
			\int_{\Omega_{21}}\frac{|X_l(\varsigma)e^{2it\theta}|}{|\varsigma-z_2|}dA\leq||r||_{H^s(\mathbb{R})}[A_1+A_2],
		\end{align}
		where
		$$A_1:=\int_0^{\frac{\rho_1}{2}}v^{s-1}\Big|\Big|\frac{e^{-8t(\alpha+6\beta z_2)uv}}{|\varsigma-z_2|}\Big|\Big|_{L_u^2(v,\infty)}dv,$$
		and
		$$A_2:=\int_\frac{\rho_1}{2}^{\infty}v^{s-1}\Big|\Big|\frac{e^{-8t(\alpha+6\beta z_2)uv}}{|\varsigma-z_2|}\Big|\Big|_{L_u^2(v,\infty)}dv.$$
		We get
		\begin{align}\nonumber
			A_1&=\int_0^{\frac{\rho_1}{2}}v^{s-1}\Big|\Big|\frac{e^{-8t(\alpha+6\beta z_2)uv}}{\sqrt{(u+z_2-\alpha_1)^2+(v-\beta_1)^2}}\Big|\Big|_{L_u^2(v,\infty)}dv\\
			&\leq C'(\beta_1)\int_0^{\frac{\rho_1}{2}}v^{s-1}\Big|\Big|e^{-8t(\alpha+6\beta z_2)uv}\Big|\Big|_{L_u^2(v,\infty)}dv\leq C(s,\beta_1)t^{-\frac{4s+1}{8}}.
		\end{align}
		By using (\ref{eq:3.52}) and consider the case when $t\ge1$ and $e^{-8tv^2}\leq e^{-t\gamma_1^2}e^{-4v^2}$ for $v\ge\frac{\beta_1}{2}$ we acquire the bound for $A_2$ is
		\begin{align}\nonumber
			A_2&\leq\int_\frac{\rho_1}{2}^\infty e^{-8t(\alpha+6\beta z_2)v^2}v^{s-1}|||\varsigma-z_2|^{-1}||_{L_u^2(v,\infty)}dv\\\nonumber
			&\leq C\int_\frac{\rho_1}{2}^\infty e^{-8t(\alpha+6\beta z_2)v^2}v^{s-1}|v-\beta_1|^{-\frac{1}{2}}dv\\
			&\leq Ce^{-t\beta_1^2}\int_0^\infty e^{-4(\alpha+6\beta z_2)v^2}v^{s-1}|v-\beta_1|^{-\frac{1}{2}}dv\leq C'e^{-t\beta_1^2}.
		\end{align}
		For $l=2$ we similarly have
		\begin{align}
			\int_{\Omega_{21}}\frac{|X_2(\varsigma)e^{2it\theta}|}{|\varsigma-z_2|}dA\leq||r||_{H^s(\mathbb{R})}[B_1+B_2],
		\end{align}
		where
		$$B_1:=\int_0^{\frac{\rho_1}{2}}\int_v^\infty |\varsigma-z_2|^{s-\frac{3}{2}}\frac{e^{-8t(\alpha+6\beta z_2)uv}}{|\varsigma-z_2|}dv,$$
		and
		$$B_2:=\int_{\frac{\rho_1}{2}}^\infty\int_v^\infty |\varsigma-z_2|^{s-\frac{3}{2}}\frac{e^{-8t(\alpha+6\beta z_2)uv}}{|\varsigma-z_2|}dv.$$
		Then we obtain $B_1\leq C(\beta,s)t^{-\frac{2s+1}{8}}$   and $|\varsigma-z_2|\ge\beta_1/2$.  And when $t\ge1$ we have
		\begin{align}\nonumber
			B_2&\leq\int_{\frac{\rho_1}{2}}^\infty e^{-8t(\alpha+6\beta z_2)v^2}|||\varsigma-z_2|^{s-\frac{3}{2}}||_{L^p(v,\infty)}|||\varsigma-z_2|^{-1}||_{L^q(v,\infty)}dv\\
			&\leq Ce^{-t\beta_1^2/2}\int_0^\infty  e^{-4(\alpha+6\beta z_2)v^2}v^{\frac{2s-3}{2}+\frac{1}{p}}|v-\beta_1|^{\frac{1}{q}-1}dv\leq C_s e^{-ty_1^2/2}.
		\end{align}
	\end{proof}
	\begin{lemma}\label{le42} Fix $\lambda_0>0$. Then there is a $C>0$ and a $T>0$ such that for $||\tilde{r}||_{H^s(\mathbb{R})}<\lambda_0$ we have for $t\ge T$
		\begin{align}\nonumber
			&\big|m_{11}(t,x,z_s)-\delta(z_s)\big|+\big|m_{22}(t,x,z_s)-\delta^{-1}(z_s)\big|\\
			&\qquad\qquad\qquad\qquad\leq C||\tilde{r}||_{H^s(\mathbb{R})}t^{-\frac{1}{2}}(||\tilde{r}||_{H^s(\mathbb{R})}+t^{-\frac{2s-1}{8}}),\\\nonumber
			&\big|m_{12}(t,x,z_s)-\frac{\delta^{-1}(z_s)\beta_{12}}{\sqrt{(-1)^j8t(\alpha+6\beta z_j)}(z_s-z_j)}\big|
			\\\nonumber
			&\qquad\qquad\qquad\qquad+|m_{22}(t,x,z_s)-\frac{\delta(z_s)\beta_{21}}{\sqrt{(-1)^j8t(\alpha+6\beta z_j)}(z_s-z_j)}\big|\\
			&\qquad\qquad\qquad\qquad\leq C(||\tilde{r}||_{H^s(\mathbb{R})}t^{-\frac{2s+1}{8}}).
		\end{align}
	\end{lemma}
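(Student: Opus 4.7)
\emph{Proof proposal.} The plan is to unwind the chain of transformations $M=M^{(3)}\,M^{rhp}\,(\mathcal{R}^{(2)})^{-1}\,\delta^{\sigma_3}$ and evaluate at the discrete eigenvalue $z_s=\alpha_1+i\beta_1\in\mathbb{C}_+$. By choosing the lens openings of $\mathcal{R}^{(2)}$ so that $z_s\in\Omega_2\cup\Omega_5$, we have $\mathcal{R}^{(2)}(z_s)=I$; should $z_s$ happen to fall in a lens sector, Lemma~\ref{le34} bounds the triangular correction by $e^{-8t\beta_1^2}$, which is absorbed into the final estimate. Thus I work with $M(z_s)=M^{(3)}(z_s)M^{rhp}(z_s)\delta^{\sigma_3}(z_s)$.

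The core input is a joint expansion of the two bounded factors. Proposition~\ref{prop34} gives $M^{rhp}(z_s)=M^{loc}(z_s)(I+O(e^{-ct}))$, where $M^{loc}$ is built from the two parabolic cylinder models
\begin{equation*}
M^{pc}(\zeta_j)=I+\frac{1}{\zeta_j}\begin{pmatrix}0 & \beta_{12}\\ \beta_{21} & 0\end{pmatrix}+O(\zeta_j^{-2}),\quad \zeta_j=\sqrt{(-1)^j 8t(\alpha+6\beta z_j)}(z_s-z_j)\sim\sqrt{t},
\end{equation*}
with $|\beta_{12}|+|\beta_{21}|\lesssim\|\tilde r\|_{H^s}$ by the embedding $H^s\hookrightarrow L^\infty$ for $s>1/2$. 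Simultaneously Lemma~\ref{le33} supplies $|M^{(3)}(z_s)-I|\le c\,\|\tilde r\|_{H^s}\,t^{-(2s+1)/4}$.

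Multiplying the three factors together and reading off each matrix position, the diagonal entry $m_{11}(z_s)$ inherits $\delta(z_s)$ at leading order, with corrections arising from $(M^{(3)}-I)_{11}$, from the diagonal part of $M^{loc}-I$, and from the cross product $(M^{(3)}-I)_{12}(M^{loc}-I)_{21}$; these all fit inside the envelope $\|\tilde r\|_{H^s}t^{-1/2}(\|\tilde r\|_{H^s}+t^{-(2s-1)/8})$. The off-diagonal entry $m_{12}(z_s)$ reads $\delta^{-1}(z_s)(M^{loc})_{12}(z_s)$ at leading order, equal to $\delta^{-1}(z_s)\beta_{12}/[\sqrt{(-1)^j 8t(\alpha+6\beta z_j)}(z_s-z_j)]$ up to a $O(\|\tilde r\|_{H^s}t^{-(2s+1)/8})$ error produced by the $\zeta_j^{-2}$ subleading piece of $M^{pc}$ and by left-multiplication by $M^{(3)}-I$. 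The analogous bookkeeping handles $m_{21}$ and $m_{22}$.

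The principal obstacle will be reconciling the various exponent combinations with the exact expressions in the statement, in particular confirming that the $O(\zeta_j^{-2})$ remainders in $M^{pc}$ are bounded by $C\|\tilde r\|_{H^s}t^{-1}$ uniformly in $z_s$, and that the $\|\tilde r\|^2 t^{-1/2}$ piece of the diagonal bound really arises from the off-diagonal cross product rather than from a smaller source. A subsidiary point is uniformity of the constants in Lemma~\ref{le33} with respect to $\mathrm{dist}(z_s,\{z_1,z_2\}\cup\Sigma^{(2)})$, which must be controlled as $(x,t)$ varies along soliton trajectories and $z_s$ stays bounded away from the stationary points.
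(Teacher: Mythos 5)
Your proposal follows essentially the same route as the paper: evaluate the composed transformations at $z_s$, use Lemma \ref{le34} to control the lens factor $\mathcal{R}^{(2)}(z_s)$, Proposition \ref{prop34} together with the parabolic cylinder expansion $M^{pc}(\zeta_j)=I+M^{pc}_1/\zeta_j+\mathcal{O}(\|\tilde r\|_{H^s}t^{-1})$ with $|\beta_{12}|+|\beta_{21}|\lesssim\|\tilde r\|_{L^\infty}$ for $M^{rhp}$, and Lemma \ref{le33} for $M^{(3)}$, then read off the matrix entries. The ingredients and their assembly match the paper's (much terser) argument, and the caveats you flag about exponent bookkeeping are reasonable but do not change the method.
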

	\begin{proof} We focus on $j=2$  and recall the Lemma \ref{le33} we have the expansion of function $M^{(3)}=I+\mathcal{O}(||\tilde{r}||_{H^s(\mathbb{R})}t^{-\frac{2s+1}{8}})$ and Lemma \ref{le34} we get similar expansions for $V_+(z_s)$ and $U_R(z_s)$. We furthermore know by the property of $|\delta^{\pm}|\leq1+\rho^2$ for $\rho=||\tilde{r}||_{L^\infty(\mathbb{R})}$ and  we acquire $|\beta_{12}|+|\beta_{21}|<C||\tilde{r}||_{L^\infty(\mathbb{R})}$. From \ref{AA} we have the expansion
		$$M^{pc}(\zeta_2)=I+\frac{M^{pc}_1}{\sqrt{8t(\alpha+6\beta z_2)}(z_s-z_2)}+\mathcal{O}(||\tilde{r}||_{H^s(\mathbb{R})}t^{-1}).$$
 These observations yield Lemma \ref{le42}.\end{proof}
	
	\subsection{The proof of main result}
	Now we start to analyze the term $\mathbf{B}$ in (\ref{eq:4.3}). Consider the following inequalities:
	\begin{align}\label{eq:4.7}
		&\big|e^{-ixz_s}m_{11}(t,x,z_s)\big|>10\big|\frac{c_1m_{12}(t,x;z_s)e^{ixz_s+i(2\alpha z_s^2+4\beta z_s^3)t}}{2i\mbox{Im}(z_s)}\big|,\\\label{eq:4.8}
		&10\big|e^{-ixz_1}m_{21}(t,x;z_s)\big|<\big|\frac{c_1m_{22}(t,x,z_s)e^{ixz_s+i(2\alpha z_s^2+4\beta z_s^3)t}}{2i\mbox{Im}(z_s)}\big|.
	\end{align}
	
	\begin{lemma}\label{le43}Given $\epsilon>0$ small, there exist $T(\epsilon_0)>0$ and $C>0$ such that if $||\tilde{r}||_{H^s(\mathbb{R})\,\cap\,L^{2,s}(\mathbb{R})}<\epsilon_0$ and if $(t,x)$ is such that at least one of (\ref{eq:4.7}) and (\ref{eq:4.8}) is false, then we have $|\mathbf{B}|<Ct^{-\frac{1}{2}}\epsilon$ for $t\geq T(\epsilon_0)$.
	\end{lemma}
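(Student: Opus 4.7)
\textbf{Proof plan for Lemma \ref{le43}.}

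The plan is to bound $\mathbf{B} = 2i(z_s - z_s^*) \frac{f_1 f_2^*}{|f_1|^2 + |f_2|^2}$ by exploiting the elementary inequality
\begin{equation*}
|\mathbf{B}| \;\leq\; 2\,\mathrm{Im}(z_s)\, \min\!\left(\frac{|f_1|}{|f_2|},\frac{|f_2|}{|f_1|}\right),
\end{equation*}
together with the fine asymptotics of the Jost-like entries $m_{ij}(t,x,z_s)$ provided by Lemma \ref{le42}. Set $\eta:=\mathrm{Im}(z_s)$ and decompose $f_j = A_j^{(1)} + A_j^{(2)}$ where $A_1^{(1)} = e^{-ixz_s}m_{11}$, $A_1^{(2)} = -\tfrac{c_1 m_{12}}{2i\eta} e^{ixz_s + i(4\beta z_s^3+2\alpha z_s^2)t}$, and analogously for $f_2$. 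From Lemma \ref{le42} and the fact that $\|\tilde r\|_{L^\infty}\leq C\epsilon$, one has, for $t\geq T$, both $|m_{11}|, |m_{22}|$ bounded above and below by fixed constants depending only on $\rho$, while $|m_{12}|, |m_{21}| \leq C\epsilon\, t^{-1/2}$. Observe also that $|A_j^{(1)}|$ scales as $e^{\eta x}$ while $|A_j^{(2)}|$ scales as $e^{-\eta x}$, so that control over which of the two terms dominates $f_j$ is governed entirely by the size of $e^{2\eta x}$.

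The proof then splits into the two mutually exclusive cases allowed by the hypothesis. If \eqref{eq:4.7} fails, then $|A_1^{(1)}|\leq 10|A_1^{(2)}|$, which after inserting the bounds on $m_{ij}$ forces $e^{2\eta x} \leq C_1 \epsilon t^{-1/2}$, i.e.\ $x$ is sufficiently negative. From $|f_1|\leq 11|A_1^{(2)}|$ and the observation that $|A_2^{(1)}|$ is then negligible compared to $|A_2^{(2)}|$ (because $e^{2\eta x}$ is small), one obtains $|f_2|\geq \tfrac12|A_2^{(2)}|$ and hence
\begin{equation*}
\frac{|f_1|}{|f_2|} \leq \frac{22|m_{12}|}{|m_{22}|} \leq C\epsilon t^{-1/2}.
\end{equation*}
Symmetrically, if \eqref{eq:4.8} fails, then $10|A_2^{(1)}|\geq |A_2^{(2)}|$, which yields $e^{2\eta x}\geq C_2 t^{1/2}/\epsilon$, so $x$ is sufficiently positive. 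A parallel argument then gives $|f_2|\leq 11|A_2^{(1)}|$ and $|f_1|\geq \tfrac12 |A_1^{(1)}|$, whence $|f_2|/|f_1|\leq C\epsilon t^{-1/2}$. Both cases yield $|\mathbf{B}|\leq C\epsilon t^{-1/2}$. For $\epsilon_0$ small and $T$ large the two bad regimes are disjoint, so the case analysis exhausts the hypothesis.

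The main technical obstacle is not the trigonometric-type cancellation bound itself but the careful bookkeeping of the $\epsilon$-smallness: one must verify that both anti-diagonal entries $m_{12}, m_{21}$ actually inherit an $\epsilon$ factor (via $|\beta_{12}|+|\beta_{21}|\leq C\|\tilde r\|_{L^\infty}\leq C\epsilon$ from Lemma \ref{le42}), because without this extra $\epsilon$ one only obtains $|\mathbf{B}|\leq C t^{-1/2}$ instead of $C\epsilon t^{-1/2}$. A secondary nuisance is choosing $T(\epsilon_0)$ large enough that the error terms in the expansion of $m_{ij}$ furnished by Lemma \ref{le42}, as well as the neglected term $|A_2^{(1)}|$ (resp.\ $|A_1^{(2)}|$), are genuinely absorbed into the leading part; this is routine once one notes that in each of the two cases the ``wrong'' off-diagonal contribution is suppressed by an extra power of $e^{\mp 2\eta x}$ that sits deep inside the favorable regime.
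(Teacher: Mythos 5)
Your proposal is correct and follows essentially the same route as the paper: both bound $|\mathbf{B}|$ by a constant times $\min\left(|f_1|/|f_2|,\,|f_2|/|f_1|\right)$, split into cases according to which of (\ref{eq:4.7}), (\ref{eq:4.8}) fails, and invoke Lemma \ref{le42} to get $|m_{12}|,|m_{21}|\lesssim \epsilon t^{-1/2}$ while $|m_{11}|,|m_{22}|$ remain comparable to $\delta^{\pm1}(z_s)$ and hence bounded above and below, so the relevant ratio is $O(\epsilon t^{-1/2})$. The only cosmetic difference is that the paper organizes the second case as ``(\ref{eq:4.7}) true and (\ref{eq:4.8}) false'' rather than as two disjoint regimes, but the estimates are identical.
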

	\begin{proof} For the $\epsilon$ of (\ref{eq:1.4}) and $\rho=||\tilde{r}||_{L^\infty(\mathbb{R})}$. We are only consider the case when $t$ is large by assuming that for $(t,x)$ inequality(\ref{eq:4.7}) is false, Lemma \ref{le42} implies for $t\ge T$
		\begin{align}\nonumber
			|m_{12}|&\leq(1+\rho^2)|k_1|t^{-\frac{1}{2}}+C\epsilon t^{-\frac{2s+1}{8}}\\\nonumber
			&\leq t^{-\frac{1}{2}}\epsilon K\big(\frac{1}{2}(1+\rho^2)^{-1}-Ct^{\frac{1-2s}{8}}-C\epsilon\big)\\\label{eq:4.9}
			&\leq t^{-\frac{1}{2}}\epsilon K|m_{22}|,
		\end{align}
		for a fixed and sufficiently large constant $K$. Then, if (\ref{eq:4.7}) is false and $t\ge T$, both terms in (\ref{eq:4.7}) are bounded from above by
		$$\Big|\frac{c_1m_{22}(t,x;z_s)e^{ixz_s+i(2\alpha z_s^2+4\beta z_s^3)t}}{2i\mbox{Im}(z_s)}\Big|.$$
		For $t\ge T$ by the same argument of (\ref{eq:4.9}) we have also
		\begin{equation}\label{eq:4.10}
			\big|e^{-ixz_1}m_{21}(x,t;z_s)\big|\leq t^{-\frac{1}{2}}\epsilon K|e^{-ixz_s}m_{11}(x,t;z_s)\big|.
		\end{equation}
		We conclude that for $t\ge T$ and if $(t,x)$ is in the domain where (\ref{eq:4.9}) is false, we have for some fixed $K$
		\begin{equation}\label{eq:4.11}
			|\mathbf{B}|\leq K\frac{\big|m_{12}e^{ixz_s-i(2\alpha z_s^2+4\beta z_s^3)t}m^*_{22}e^{-ixz^*_s-i(2\alpha z^{*2}_s+4\beta z_s^{*3})t}\big|}{|m_{22}e^{ixz_s+i(2\alpha z_s^2+4\beta z_s^3)t}|^2}
			=K\frac{|m_{12}|}{|m_{22}|}\leq\frac{CK}{\sqrt{t}}\epsilon.
		\end{equation}
		Now we assume that $(t,x)$ is such that (\ref{eq:4.7}) is true. Notice that by (\ref{eq:4.7}) and (\ref{eq:4.10}) we have for a fixed $K$
		\begin{equation}\label{eq:4.12}
			\frac{|f_1e^{ixz_s^*}m_{21}^*|}{||f||^2}\leq K\frac{\big|e^{-ixz_s}{m_{11}e^{-ixz_s^*}}m_{21}^*|}{|e^{-ixz_s}m_{11}|^2}=K\frac{|m_{21}|}{|m_{11}|}\leq\frac{CK}{\sqrt{t}}\epsilon.
		\end{equation}
		Consider the suppose that $(t,x)$ is such that (\ref{eq:4.7}) and (\ref{eq:4.8}) are not both true, we assume now that (\ref{eq:4.7}) is true and (\ref{eq:4.8}) is false. Then by (\ref{eq:4.12}), for a fixed $K$
		\begin{equation}
			|\mathbf{B}|\leq4|\mbox{Im} z_s|\frac{f_1f_2}{||f||^2}\leq K\frac{|f_1e^{ixz_s^*}m_{21}^*|}{||f||^2}\leq\frac{CK}{\sqrt{t}}\epsilon.
		\end{equation}
		This prove the Lemma \ref{le42} for values of $(t,x)$ for which (\ref{eq:4.7}) and (\ref{eq:4.8}) are true.
		\end{proof}
			\begin{lemma}\label{le35} Fix $\rho_0>0$ and let $\rho:=||r||_{L^\infty(\mathbb{R})}$ and assume $\rho<\rho_0$, then for fix $z_s=\alpha_1+i\beta_1$ with $\beta_1>0$ there exists a constant $C$ independent from $z_j$ such that
			\begin{align}\nonumber
				&|\delta(z_s)-\nu(z_s)|\leq C||r||_{L^2}^2,\\\label{eq:3.53}
				&\mbox{where}\,\,\,\,\,\nu(z_s):=\exp\Big(\frac{1}{2\pi i}\int_{-\infty}^{\alpha_1}\frac{\log(1+|r(\varsigma)|^2}{\varsigma-z_s}d\varsigma\Big),
			\end{align}
			and fix $K>0$ then for $|z_j-\alpha_1|\leq K/\sqrt{t}$ there exists a constant $C$ such that
			\begin{equation}\label{eq:3.54}
				|\delta(z_s)-\nu(z_s)|\leq \frac{C}{\sqrt{t}\beta_1}\log(1+\rho^2).
			\end{equation}
		\end{lemma}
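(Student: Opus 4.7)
\noindent\textbf{Proof proposal for Lemma \ref{le35}.}

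My plan is to write both $\delta(z_s)$ and $\nu(z_s)$ as $\exp(\cdot)$ of explicit Cauchy integrals and control their difference via the elementary mean value bound
$$|e^{A}-e^{B}| \leq |A-B|\cdot\sup_{\tau\in[0,1]}|e^{A+\tau(B-A)}|.$$
Setting $A = \frac{1}{2\pi i}\int_{z_1}^{z_2}\frac{\log(1+|r(s)|^2)}{s-z_s}\,ds$ (which is $\delta(z_s)$ rewritten from the definition using $\nu(s)=-\frac{1}{2\pi}\log(1+|r|^2)$) and $B = \frac{1}{2\pi i}\int_{-\infty}^{\alpha_1}\frac{\log(1+|r(s)|^2)}{s-z_s}\,ds$, I first uniformly bound the exponential prefactor. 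Since $\operatorname{Im}(z_s) = \beta_1 > 0$, a short computation shows
$$\operatorname{Re}(A) = \frac{1}{2\pi}\int_{z_1}^{z_2}\log(1+|r(s)|^2)\,\frac{\beta_1}{(s-\alpha_1)^2+\beta_1^2}\,ds,$$
which is non-negative and, by the pointwise bound $\log(1+|r|^2)\leq\log(1+\rho_0^2)$ together with $\int_{\mathbb{R}}\frac{\beta_1}{(s-\alpha_1)^2+\beta_1^2}ds=\pi$, is at most $\frac{1}{2}\log(1+\rho_0^2)$. The same bounds hold for $\operatorname{Re}(B)$ and for the interpolant $\operatorname{Re}(A+\tau(B-A))$, so the prefactor is controlled by $\sqrt{1+\rho_0^2}$.

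The problem then reduces to estimating $A-B$. After cancelling the overlap of the two integration intervals, the difference becomes a Cauchy integral of $\log(1+|r|^2)/(s-z_s)$ over the symmetric difference of $[z_1,z_2]$ and $(-\infty,\alpha_1]$, which (for the relevant configuration $z_1\leq\alpha_1\leq z_2$) reduces to a short arc joining $\alpha_1$ to the nearby stationary point $z_j$, together with a far piece that cancels thanks to the implicit support convention on $r$ built into the definition of $\nu(z_s)$. For the first bound I use $\log(1+|r(s)|^2)\leq|r(s)|^2$ together with the uniform lower bound $|s-z_s|\geq\beta_1$ and $L^2$-Cauchy–Schwarz to get
$$|A-B|\leq\frac{1}{2\pi\beta_1}\|r\|_{L^2(\mathbb{R})}^2,$$
which combined with the prefactor bound yields $|\delta(z_s)-\nu(z_s)|\leq C\|r\|_{L^2}^2$ with $C=C(\rho_0,\beta_1)$ independent of $z_j$, as required.

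For the second inequality, under the hypothesis $|z_j-\alpha_1|\leq K/\sqrt{t}$ the remaining short arc has length $\leq K/\sqrt{t}$, so the pointwise bound $\log(1+|r(s)|^2)\leq\log(1+\rho^2)$ and $|s-z_s|\geq\beta_1$ give
$$|A-B|\leq\frac{K\log(1+\rho^2)}{2\pi\sqrt{t}\,\beta_1},$$
which, multiplied by the bounded exponential prefactor, produces the desired $\frac{C}{\sqrt{t}\,\beta_1}\log(1+\rho^2)$ estimate. The main obstacle is the bookkeeping in Step 2: I must carefully verify how the symmetric difference reduces to the short arc near $z_j$ and confirm that the ``far'' piece either vanishes under the extension convention for $r$ (supported in $[z_1,z_2]$) or is absorbed into $\nu$ by construction; once this identification is secured, both estimates collapse to routine Poisson/Cauchy-kernel manipulations of the type already exploited in Lemma \ref{le31} and Lemma \ref{le34}.
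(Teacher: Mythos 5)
Your approach coincides with the paper's in all the substantive estimates: both write $\delta(z_s)=e^{A}$ and $\nu(z_s)=e^{B}$ for explicit Cauchy integrals, bound the exponential prefactor by $\langle\rho\rangle$ (via the Poisson-kernel identity for $\mbox{Re}\,A$, exactly as you compute, which is the paper's earlier bound $|\delta(z)|\le\langle\rho\rangle$), and then estimate the difference of exponents over the interval $[\alpha_1,z_2]$ using $\log(1+|r|^2)\le|r|^2$ together with $|s-z_s|\ge\beta_1$ for the first inequality, and $\log(1+|r|^2)\le\log(1+\rho^2)$ together with the interval length $|z_2-\alpha_1|\le K/\sqrt{t}$ for the second. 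Those are precisely the two displayed estimates in the paper's proof.

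The one step that fails as written is your treatment of the ``far piece.'' You correctly observe that with $\nu(z_s)$ defined by an integral over $(-\infty,\alpha_1]$ while $\gamma(z_s)=\log\delta(z_s)$ integrates over $[z_1,z_2]$, the exponent difference lives on $(-\infty,z_1]\cup[\alpha_1,z_2]$, and you propose to dispose of the half-line piece by appealing to an ``implicit support convention on $r$.'' No such convention exists: $r$ is a generic element of $H^s(\mathbb{R})\cap L^{2,1}(\mathbb{R})$, so $\int_{-\infty}^{z_1}\log(1+|r(s)|^2)/(s-z_s)\,ds$ neither vanishes nor is it $O(\|r\|_{L^2}^2)$ times something small, and the argument cannot be closed this way. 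The defect, however, originates in the lemma's statement rather than in your estimates: the paper's own proof compares $\gamma(z_s)$ with $\frac{1}{2\pi i}\int_{z_1}^{\alpha_1}\frac{\log(1+|r(s)|^2)}{s-z_s}ds$, i.e.\ it tacitly reads the lower limit in the definition of $\nu(z_s)$ as $z_1$ rather than $-\infty$, so the symmetric difference is exactly your short arc $[\alpha_1,z_2]$ and there is no far piece at all. With that corrected reading of $\nu$, your two estimates reproduce the paper's proof; without it, neither your argument nor the paper's establishes the lemma as literally stated.
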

		\begin{proof}According to the Proposition \ref{po32} and for a fixed $c$ and for
			\begin{equation}\nonumber
				\gamma(z):=\frac{1}{2\pi i}\int_{z_1}^{z_2}\frac{\log(1+|r(s)|^2)}{s-z}ds,
			\end{equation}
			rewrite as $\delta(z)=\delta(z)^{\gamma(z)}$ and we obtain
			\begin{equation}
				\Big|\gamma(z_s)-\frac{1}{2\pi i}\int_{z_1}^{\alpha_1}\frac{\log(1+|r(s)|^2)}{s-z_s}ds\Big|=\frac{1}{2\pi}\Big|\int_{\alpha_1}^{z_2}\frac{\log(1+r(s)|^2}{s-\alpha_1-i\beta_1}ds\Big|\leq\frac{c}{\beta_1}||r||_{L^2}^2.
			\end{equation}
			This yield (\ref{eq:3.53}) since the bound $|\delta(z)|\leq(1+\rho^2)$ is independent from $z_2$. Similarly (\ref{eq:3.54}) follows from
			\begin{equation}
				\Big|\gamma(z_s)-\frac{1}{2\pi i}\int_{z_1}^{\alpha_1}\frac{\log(1+r(s)|^2)}{s-z_s}ds\Big|=\frac{1}{2\pi}\Big|\int_{\alpha_1}^{z_2}\frac{\log(1+|r(s)|^2)}{s-\alpha_1-i\beta_1}ds\Big|.
			\end{equation}
			This yield Lemma \ref{le35}.\end{proof}
		
		 Then, We assume that (\ref{eq:4.7}) and by (\ref{eq:4.8}) are true, from the inequality (\ref{eq:4.11}) and (\ref{eq:4.12}), up to terms bounded by $Ct^{-\frac{1}{2}}\epsilon$, what is left is the analysis of
		\begin{equation}\label{eq:4.14}
			-2i\frac{e^{-ixz_s}m_{11}c_1^*m^*_{22}e^{-ixz_s^*-(2\alpha z_s^{*2}+4\beta z_s^{*3})t}}{||f||^2}.
		\end{equation}
		Set now
		\begin{equation}\nonumber
			b^2=\big|e^{-ixz_s}m_{11}\big|^2+\big|\frac{c_1m_{22}e^{ixz_s+(2\alpha z_s^2+4\beta z_s^3)t}}{2i\mbox{Im}(z_s)}\big|^2,
		\end{equation}
		and expand
		\begin{equation}\nonumber
			||f||^2=b^2\big(1+\mathcal{O}(b^{-1}\big|c_1m_{12}e^{ixz_s+(2\alpha z_s^2+4\beta z_s^3)t}\big|)+\mathcal{O}(b^{-1}|m_{21}(t,x;z_1)e^{-ixz_1})\big).
		\end{equation}
		Then the quantity in (4.14) is of the form
		\begin{align}\nonumber
			-2i&e^{-ixz_s}m_{11}\frac{c_1^*m_{22}^*e^{-ixz_s^*-(2\alpha z_s^{*2}+4\beta z_s^{*3})t}}{b^2}\\
			&\times\big(1+\mathcal{O}(b^{-1}\big|m_{12}e^{ixz_s+(2\alpha z_s^2+4\beta z_s^3)t}\big|)+\mathcal{O}(b^{-1}|m_{21}e^{-ixz_s})\big).
		\end{align}
		We claim that the quality in (\ref{eq:4.14}) equals
		\begin{equation}\label{eq:4.16}
			-2i\frac{e^{-ixz_1}\delta(z_s)(\delta(z_s))^{-1}c_1^*e^{-ixz_s^*-(2\alpha z_s^{*2}+4\beta z_s^{*3})t}}{b^2}(1+\mathcal{O}(\epsilon t^{-\frac{1}{2}})).
		\end{equation}
		To prove this claim, since $m_{ii}=\delta^{-(-1)^i}(z_1)+\mathcal{O}(\epsilon^{-\frac{1}{2})}$ and $|\delta^{\pm1}(z_1)|\ge\langle\rho\rangle^{-2}$, we have
		\begin{equation}\nonumber
			b^2=(\big|\delta(z_s)e^{-ixz_s}\big|^2
			+\big|\frac{c_1\delta(z_s)^{-1}e^{ixz_s+(2\alpha z_s^2+4\beta z_s^3)t}}{2i\mbox{Im}(z_s)}\big|^2)(1+\mathcal{O}(\epsilon t^{-\frac{1}{2}})).
		\end{equation}
		We have $\mathcal{O}(b^{-1}\big|c_1m_{12}e^{ixz_s+(2\alpha z_s^2+4\beta z_s^3)t}\big|)
		=\mathcal{O}(\epsilon t^{-\frac{1}{2}})$ by
		$$b^{-1}\big|c_1m_{12}e^{ixz_s+(2\alpha z_s^2+4\beta z_s^3)t}
		\big|\leq\frac{|m_{12}e^{ixz_s+(2\alpha z_s^2+4\beta z_s^3)t}|}{|m_{22}e^{ixz_s+(2\alpha z_s^2+4\beta z_s^3)t}|}=\frac{|m_{12}|}{|m_{22}|}\leq C\epsilon t^{-\frac{1}{2}}.$$
		We have $\mathcal{O}(b^{-1}\big|m_{21}e^{ixz_s^*}\big|)=\mathcal{O}(\epsilon t^{-\frac{1}{2}})$ by
		$$b^{-1}\big|m_{21}e^{ixz_s}\big|\leq\frac{|m_{12}e^{-ixz_s}|}{|m_{11}e^{-ixz_s}|}=\frac{|m_{21}|}{|m_{11}|}\leq C\epsilon t^{-\frac{1}{2}}.$$
		hence (\ref{eq:4.16}) is proved.
		Consider the term in (\ref{eq:4.16}), now for $z_s=\xi+i\eta$ and $\nu(z_s)$ defined in (\ref{eq:3.6}) and inserting trivial factors $\nu/\nu=1$ and $\nu^*/\nu^*=1$, the expression in (\ref{eq:4.16}) equals
		\begin{equation}\label{eq:4.17}
			\frac{4\eta e^{-ixz_s-i(2\alpha z_s^2+4\beta z_s^3)t}e^{-ixz_s^*-i(2\alpha z_s^{*2}+4\beta z_s^{*3})t}\frac{\delta(z_s)}{\nu(z_s)}\frac{\nu^*(z_s)}{\delta^*(z_s)}\frac{\nu(z_s)}{\nu(z_s)}}
			{\tilde{b}^2},
		\end{equation}
		where
		\begin{align}\nonumber
			\tilde{b}^2=\big|&e^{-2\eta x-24\beta\xi^2t+8\beta\eta^3t-8\alpha\eta\xi t}\big|^2|\frac{\delta(z_s)}{\nu(z_s)}||\nu(z_s)|\\\nonumber
			&\qquad\qquad\qquad+\big|e^{2\eta x+24\beta\xi^2t-8\beta\eta^3t+8\alpha\eta\xi t}\big|^2|\frac{\nu(z_s)}{\delta(z_s)}||\nu(z_s)|^{-1}.
		\end{align}
		Now fix a constant $\kappa>0$,Then (\ref{eq:4.17}) differs from the soliton solution
		\begin{align}\nonumber
			2&\eta e^{2i(-\xi x-4\beta\xi^3t-2\alpha\xi^2t+12\beta\xi\eta^2t+2\alpha\eta^2t)+i\gamma}\\\label{eq:4.18}
			&\qquad\qquad\times\mbox{sech}(-2\eta x-24\beta\xi^2t+8\beta\eta^3t-8\alpha\eta\xi t+\log(|\nu(z_s)|)),
		\end{align}
		by less than $c\kappa t^{-\frac{1}{2}}\epsilon$ by the sum of the following two error terms: notice the  (\ref{eq:4.17}) and (\ref{eq:4.18}) can be bounded, up to a constant factor $C=C(\xi,\eta,\alpha,\beta)$, by the sum of the following two error terms:
		\begin{equation}\label{eq:4.19}
			\frac{\big|\frac{\delta(z_s)}{\nu(z_s)}\frac{\nu^*(z_s)}{\delta^*(z_s)}-1\big|}{e^{8(3\beta|z_2-\xi|^2t-\beta\eta^3t+\alpha\eta|z_2-\xi| t)}(1+||\tilde{r}||_{L^\infty(\mathbb{R})}^2)^{-1}},\end{equation}
		and
		\begin{align}\nonumber
			\big|&\mbox{sech}(-8(3\beta|z_2-\xi|^2t-\beta\eta^3t+\alpha\eta|z_2-\xi| t+\log(|\nu(z_s)|))\\\label{eq:4.20}
			& - \mbox{sech}( -8(3\beta|z_2-\xi|^2t-\beta\eta^3t+\alpha\eta|z_2-\xi| t+\log(|\nu(z_s)|)+\log\big(\frac{|\delta(z_s)|}{|\nu(z_s)|}\big))\big|.
		\end{align}
		At first we bounded (\ref{eq:4.19}) and for $|z_2-\xi|\ge\kappa t^{-\frac{1}{2}}$ formula (\ref{eq:4.19}) is bounded by $Ce^{8(3\beta|\sqrt{t}\kappa}e^{-\beta\eta^3t+\alpha\eta\kappa\sqrt{t})}.$ For
		$|z_2-\xi|\leq\kappa t^{-\frac{1}{2}}$ we bounded (\ref{eq:4.19}) by (\ref{eq:3.54})
		$$(1+||\tilde{r}||_{L^\infty(\mathbb{R})}^2)\Big|\frac{\delta(z_s)}{\nu(z_s)}\frac{\nu^*(z_s)}{\delta^*(x_s)}-1\Big|\leq4\frac{C}{\sqrt{t}}(1+||\tilde{r}||_{L_\infty^2(\mathbb{R})})\leq Kt^{-\frac{1}{2}}\epsilon^2.$$
		According to Lagrange Theorem, (\ref{eq:4.20}) is bounded
		\begin{align}\nonumber
			\mbox{sech}( -8(3\beta|z_2-\xi|^2t&-\beta\eta^3t+\alpha\eta|z_2-\xi| t+\log(|\nu(z_s)|)\\\nonumber
			&+\log\big(\frac{|\delta(z_s)|}{|\nu(z_s)|}\big))\Big|\log\Big(\frac{|\delta(z_s)|}{|\nu(z_s)|}\Big)\Big|,
		\end{align}
		for some $c\in(0,1)$. This satisfies bounds similar to those satisfied by (\ref{eq:4.19}).
		
		We consider initial potential $q_0\in H^{1}(\mathbb{R})\,\cap\,L^{2,s}(\mathbb{R})$ we need to show that when one of (\ref{eq:4.7}) and (\ref{eq:4.8}) is false to complete the proof of Theorem \ref{th1}, then the function in (\ref{eq:4.18}) is $\mathcal{O}(\epsilon t^{-\frac{1}{2}})$. By Lemma \ref{le42} the fact that (\ref{eq:4.7}) and (\ref{eq:4.8}) false means that for a fixed $C=C(\rho_0)>0$ we have
		$$|e^{-2ixz_s-2i(\alpha z_s^{*2}+2\beta z_s^{*3})t}|\leq C\epsilon t^{-\frac{1}{2}},$$
		and
		$$|e^{2ixz^*_s+2i(\alpha z_s^{2}+2\beta z_s^{3})t}|\leq C\epsilon t^{-\frac{1}{2}}.$$
		Any of these yield our claim that the function in (\ref{eq:4.18}) is $\mathcal{O}(\epsilon t^{-\frac{1}{2})}$
		Then we complete the proof of Theorem \ref{th1} for $q_0\in H^{1}(\mathbb{R})\,\cap\,L^{2,s}(\mathbb{R})$ and we notice that when $|t|\ge T(\epsilon_0)$ the soliton in formula (\ref{eq:1.3}) is given by formula (\ref{eq:4.18}).
	
In the end of this paper of we will explain the ground states $q_{\xi,\eta,\gamma_{\pm},\alpha,\beta}(t,x-x_{\pm})$ in the statement of Theorem \ref{th1} are in general distinct. The + ground state has been computed explicitly in (\ref{eq:4.18}).
	\\
	\begin{lemma}\label{le48} Considering $t<-T(\lambda_0)$ ground state is given by formula (\ref{eq:4.20}) but with $\nu(z_s)$ replaced by
		\begin{equation}\label{eq:4.21}
			\Lambda(z_s)=\exp\Big(\frac{1}{2\pi i}\int_{\alpha_1}^{\infty}\frac{(\log(1+|r(s)|^2)}{s-z_s}ds)\Big).
	\end{equation}
	\end{lemma}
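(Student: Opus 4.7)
The plan is to rerun the entire steepest descent and B\"acklund analysis of Sections~\ref{sec3}--\ref{sec4} for $t<-T(\lambda_0)$. The structural change is a single reflection: under $t\mapsto -t$ the signature table of $\mathrm{Re}(2it\theta(z))$ is flipped across the real axis, so the yellow and white domains of Figure~\ref{F1} interchange. All subsequent modifications are forced by this sign flip.

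First, I would swap the two factorizations of the jump matrix $V(z)$: for $t<0$, the ``lens--opening'' factorization with diagonal middle (the one that requires the $\delta$ function) must be applied on $(-\infty,z_1)\cup(z_2,\infty)$, while the trivial upper/lower factorization is used on $(z_1,z_2)$. Correspondingly, the scalar RH problem (\ref{eq:3.5}) is replaced by its mirror image, and the Plemelj formula yields
$$\delta(z)=\exp\left(\frac{1}{2\pi i}\int_{\mathbb{R}\setminus(z_1,z_2)}\frac{\log(1+|r(s)|^2)}{s-z}\,ds\right).$$
Localizing this new $\delta$ near the stationary point $z_s=\alpha_1+i\beta_1$ close to $z_2$ (the case $z_s$ close to $z_1$ being symmetric), the dominant contribution comes from the portion of the integral over $(z_2,\infty)$, which to leading order is captured by the integral over $(\alpha_1,\infty)$. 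This is precisely the function $\Lambda(z_s)$ defined in (\ref{eq:4.21}).

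Next, I would verify that the remaining pieces of the analysis transport verbatim. The rays of $\Sigma^{(2)}$ near $z_1,z_2$ are relabeled so that the rescaled variables $\zeta_j=\sqrt{(-1)^{j+1}8t(\alpha+6\beta z_j)}(z-z_j)$ again map the local neighborhoods onto the parabolic cylinder model of Appendix~\ref{AA}; the $\overline\partial$-bounds of Section~\ref{dbar} are insensitive to the sign of $t$ since they depend only on $|e^{\pm 2it\theta}|$ and powers of $|t|$. The B\"acklund identity (\ref{eq:4.3}) and Lemmas~\ref{le34}--\ref{le43} are repeated with the sign of $t$ flipped throughout; in particular, the small exponentials $e^{-t8\beta_1^2}$ in Lemma~\ref{le34} are now replaced by $e^{-|t|8\beta_1^2}$ because of the opposite orientation of the decay regions. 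Finally, inserting these new ingredients into the asymptotic reduction (\ref{eq:4.14})--(\ref{eq:4.20}) of the B\"acklund term $\mathbf{B}$ and replacing every occurrence of $\nu(z_s)$ by $\Lambda(z_s)$ produces the soliton profile of the form (\ref{eq:4.18}) but with $\log|\nu(z_s)|$ replaced by $\log|\Lambda(z_s)|$, which is the claim.

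The main obstacle I expect is establishing the mirror-image of Lemma~\ref{le35}, namely the bound
$$|\delta(z_s)-\Lambda(z_s)|\le \frac{C}{\sqrt{|t|}\,\beta_1}\log(1+\rho^2)\quad\text{for }|z_2-\alpha_1|\le K/\sqrt{|t|}.$$
The non-trivial point is that the tail integral over $(-\infty,z_1)$, which has no analogue in the positive-time setting, must be shown to contribute only a bounded error of size $C\beta_1^{-1}\|r\|_{L^2}^2$; this follows from the same $L^2$ Cauchy-Schwarz estimate used for the tail $(\alpha_1,z_2)$ in the proof of Lemma~\ref{le35}, with the roles of the two intervals reversed and with the orientation of the contour taken into account. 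Once this estimate is in hand, every other step reduces to a mechanical sign-change of the $t>0$ argument, and the asymptotic formula (\ref{eq:4.20}) with $\Lambda$ in place of $\nu$ follows.
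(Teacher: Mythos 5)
Your proposal takes a genuinely different route from the paper, and as sketched it has a gap at exactly the point you flag as the main obstacle. The paper does not redo any of the steepest descent analysis for $t<0$: it observes that $u(t,x):=q^{*}(-t,x)$ (more precisely the conjugation--reflection $q^{*}(-t,-x)$, which is what is consistent with the stated spectral map, since $q^{*}(-t,x)$ alone sends $\beta\mapsto-\beta$) solves the same Hirota equation with spectral data $(r^{*}(-z),-z_s^{*},-c_1^{*})$, applies the already--established $t\to+\infty$ profile (\ref{eq:4.18}) to $u$, and then a change of variables $s\mapsto -s$ in the integral defining $\nu$ for the transformed data turns $\int_{-\infty}^{-\alpha_1}$ into $\int_{\alpha_1}^{\infty}$, i.e.\ into $\Lambda(z_s)$. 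This is a few lines, and it is automatically consistent with whatever the $t\to+\infty$ analysis produced. Your plan instead reruns Sections~\ref{sec3}--\ref{sec4} with the signature table reflected; that is legitimate in principle but roughly doubles the work, and every mirrored estimate (Lemmas~\ref{le31}, \ref{le34}, \ref{le33}, \ref{le42}, \ref{le35}) has to be re-proved rather than quoted.

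The concrete gap is in your treatment of the tail over $(-\infty,z_1)$. For $t<0$ the mirrored scalar problem forces $\delta$ to carry its jump on $\mathbb{R}\setminus(z_1,z_2)$, so at the eigenvalue one gets
$\delta(z_s)=\Lambda(z_s)\exp\bigl(\tfrac{1}{2\pi i}\int_{-\infty}^{z_1}\tfrac{\log(1+|r(s)|^2)}{s-z_s}\,ds\bigr)\bigl(1+\mathcal{O}(|t|^{-1/2})\bigr)$
when $|z_2-\alpha_1|\le K/\sqrt{|t|}$. The second stationary point $z_1$ sits at a fixed $\mathcal{O}(1)$ distance from $\alpha_1$ on the soliton ray, so the extra exponential factor is $t$-independent; your proposed Cauchy--Schwarz bound shows only that it equals $1+\mathcal{O}(\|r\|_{L^2}^2)$, not that it tends to $1$. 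A non-decaying multiplicative factor of size $1+\mathcal{O}(\epsilon^2)$ in $\delta(z_s)/\Lambda(z_s)$ shifts the modulus and phase of the limiting profile by an amount that does not vanish as $t\to-\infty$, so the conclusion of the lemma --- that the ground state is given by (\ref{eq:4.18}) with $\nu(z_s)$ replaced \emph{exactly} by $\Lambda(z_s)$ --- does not follow from the estimate you propose. To close this you must either show that this tail factor cancels in the combination $\frac{\delta}{\Lambda}\frac{\Lambda^{*}}{\delta^{*}}$ entering the analogue of (\ref{eq:4.19}), or absorb it into the admissible $\mathcal{O}(\epsilon)$ re-definition of $(\eta_1,\xi_1,\gamma_1,x_1)$ and weaken the statement accordingly; note also that the same tail already lurks in the $t>0$ comparison of $\delta$ with $\nu$ in Lemma~\ref{le35} (where $\nu$ integrates over $(-\infty,\alpha_1)$ but $\delta$ jumps only on $(z_1,z_2)$), so it is not a phenomenon ``with no analogue in the positive-time setting'' --- any resolution you adopt must be applied symmetrically to both time directions, which is precisely what the paper's symmetry argument guarantees for free.
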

	\begin{proof} Notice that if $q(t,x)$ solves (\ref{eq:1.1}) then $u(t,x):=q^*(-t,x)$ solves the Hirota equation with initial value $q^*_0(x)$, and if $(r(z),z_s,c_1)$ are the spectral data of $q_0\in\mathcal{G}_1$, then we have $q^*_0\in\mathcal{G}_1$ with spectral data $(r^*(-z),-z_s^*,-c_1^*)$. According to (\ref{eq:4.18}) we obtain when $t\to-\infty$
		\begin{align}\nonumber
			u(-t,x)&\sim-2\eta e^{2i(\xi x+4\beta\xi^3t+2\alpha\xi^2t-12\beta\xi\eta^2t-2\alpha\eta^2t)-i\gamma}\\\nonumber
			&\times\mbox{sech}(-2\eta x-24\beta\xi^2t+8\beta\eta^3t-8\alpha\eta\xi t+\log(|\Lambda(z_s)|)).
		\end{align}
		And the complex conjugate of $\Lambda(z_s)$ can be written as
		\begin{equation}\nonumber
			\Lambda^*(z_s)=\exp\Big(\frac{1}{2\pi i}\int_{-\infty}^{\xi}\frac{(\log(1+|r(s)|^2)}{s+\xi-i\eta}ds)\Big).
		\end{equation}
	Then (\ref{eq:4.21}) is true and using $q(t,x)=u^*(-t,x)$ and so taking the complex conjugate of the above formula, we obtain for $t\to-\infty$
		\begin{align}\nonumber
		q(t,x)&\sim2\eta e^{2i(-\xi x-4\beta\xi^3t-2\alpha\xi^2t+12\beta\xi\eta^2t+2\alpha\eta^2t)+i\gamma}\\\nonumber
			&\times\mbox{sech}(-2\eta x-24\beta\xi^2t+8\beta\eta^3t-8\alpha\eta\xi t+\log(|\Lambda(z_s)|)).
		\end{align}
		thus completing the proof of Lemma \ref{le48}.
	\end{proof}
	\clearpage

	\appendix
	\section{A parabolic cylinder model}\label{AA}

	Here we describe the solution of the parabolic cylinder model problem \cite{Cuccagna2013}. Let $\Sigma^{pc}=\cup_{n=1}^4\Sigma_j$, where $\Sigma_j$ denotes the complex contour
	
\begin{equation}
		\Sigma_j=\{\zeta\in\mathbb{C}|\arg\zeta=\frac{2j-1}{4}\pi\},\quad j=1,\dots,4.
	\end{equation}
	Denote $\Omega_j,j=1,\dots,6$ be the six maximally connected open sectors in $\mathbb{C}\setminus(\Sigma^{pc}\cup\mathbb{R})$ where
its labelled sequentially as one encircles the origin in a counterclockwise fashion. Finally, fix $r_0\in\mathbb{C}$ and let
	\begin{equation}
		\kappa :=-\frac{1}{2\pi}\log(1+|r_0|^2).
	\end{equation}
	Then consider the following RH problem.

	\begin{problem}\label{PC}
Fix $r_0\in\mathbb{C}$, find an analytic function $M^{pc}(\cdot):\mathbb{C}\setminus\Sigma^{pc}\to SL_{2}(\mathbb{C})$
		\begin{enumerate}
			\item $M^{pc}(\zeta)=I+ \zeta^{-1}  M^{pc}_1 +\mathcal{O}(\zeta^{-2})$ uniformly as $\zeta\to\infty$.
			\item For $\zeta\in\Sigma^{pc}$, the continuous values $M^{pc}_{\pm}(\zeta)$ satisfy the jump relation $$M^{pc}_+(\zeta)=M^{pc}_-(\zeta)V^{pc}(\zeta),$$
			where
		\end{enumerate}
		\begin{equation}
			V^{pc}(\zeta  )=\begin{cases}
				\begin{pmatrix}
					1&0\\
					r_0\zeta^{-2i\kappa}e^{i\zeta^2/2}&1
				\end{pmatrix},\quad\,\,\,\,\quad\qquad\mbox{arg}\zeta=\frac{\pi}{4},\\
				\\
				\begin{pmatrix}
					1&r_0^*\zeta^{2i\kappa}e^{-i\zeta^2/2}\\
					0&1
				\end{pmatrix},\quad\,\,\quad\,\qquad\mbox{arg}\zeta=-\frac{\pi}{4},\\
				\\
				\begin{pmatrix}
					1&\frac{r_0^*}{1+|r_0|^2}\zeta^{2i\kappa}e^{-\zeta^2/2}\\
					0&1
				\end{pmatrix},\quad\qquad\mbox{arg}\zeta=\frac{3\pi}{4},\\
				\\
				\begin{pmatrix}
					1&0\\
					\frac{r_0}{1+|r_0|^2}\zeta^{-2i\kappa}e^{i\zeta^2/2}&1
				\end{pmatrix},\quad\qquad\mbox{arg}\zeta=-\frac{\pi}{4},
			\end{cases}
		\end{equation}
See Figure \ref{fgsd}.
	\end{problem}
	
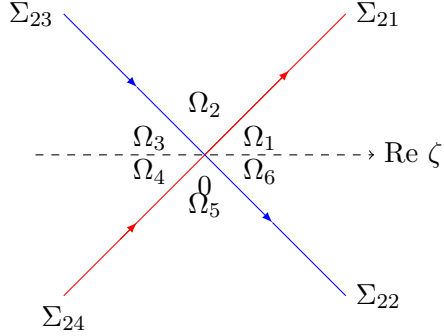
\begin{figure}\label{F4}
		\begin{center}
			\begin{tikzpicture}[scale=0.75]
				[node distance=1cm]
				\draw [dashed,-> ](-3,0)--(3,0)node[right]{Re $\zeta$};	
				\draw[-latex,red](0,0)--(1.5,1.5) ;
				\draw[red](2.5,2.5)--(1.5,1.5) ;
				\node[right] at (2.5,2.5) {$\Sigma_{21}$};	
				\draw[red](0,0)--(1.5,1.5) ;
				\draw[-latex,red](-2.5,-2.5)--(-1.2,-1.2) ;
				\draw[red](-1.5,-1.5)--(0,0);
				\node[below] at (-2.5,-2.5) {$\Sigma_{24}$};	
				\draw[blue](0,0)--(-1.2,1.2) ;
				\draw[-latex,blue](-2.5,2.5)--(-1.2,1.2) ;
				\node[left] at (-2.5,2.5) {$\Sigma_{23}$};	
				\draw[-latex,blue](0,0)--(1.2,-1.2) ;
				\draw[blue](2.5,-2.5)--(1.2,-1.2) ;
				\node[right] at (2.5,-2.5) {$\Sigma_{22}$};	
				\draw(0,-0.2)node[below] {$0$};
				\draw(0.5,0.3) node[right]{$\Omega_{1}$};
				\draw(0,0.5) node[above]{$\Omega_{2}$};
				\draw(-0.5,0.3) node[left]{$\Omega_{3}$};
				\draw(-0.5,-0.3) node[left]{$\Omega_{4}$};
				\draw(0,-0.5) node[below]{$\Omega_{5}$};
				\draw(0.5,-0.3) node[right]{$\Omega_{6}$};
			\end{tikzpicture}
		\end{center}
		\caption{The contours $\Sigma_j$ and sectors $\Omega_j$ in the $\zeta$-plane defining RHP \textcolor{blue}{A.1}  }
\label{fgsd}
	\end{figure}

The solution of the  RH problem  \ref{PC}  can be given by
	\begin{equation}
		M^{pc}(\zeta,r)=I+\frac{1}{\zeta}
		\begin{pmatrix}
			0&-i\beta_{12} \\
			i\beta_{21} &0
		\end{pmatrix}+\mathcal{O}(\zeta^{-2}),
	\end{equation}
where   $\beta_{12}$ and $\beta_{21}$ are the complex constants
	\begin{align}\nonumber
		&\beta_{12}= \frac{-i\sqrt{2\pi}e^{i\pi/4}e^{-\pi\kappa/2}}{r_0\Gamma(-i\kappa)},\\
		&\beta_{21}= \frac{i\sqrt{2\pi}e^{-i\pi/4}e^{-\pi\kappa/2}}{r^*_0\Gamma(i\kappa)}=\frac{\kappa}{\beta_{12}}.
	\end{align}

	It can be shown that 

	\begin{lemma}\label{A1}    Let $\rho=||r||_{L^\infty(\mathbb{R})}$,  there exists a $C$ such that
		\begin{align}\label{eq:A.9}
			&|M^{pc}(\zeta)|\leq C,\quad \mbox{for all}\,\,\zeta\notin\mathbb{R},\\\label{eq:A.10}
			&\Big|M^{pc}(\zeta)-I-  \zeta^{-1}  {M^{pc}}_1  \Big|\leq C\rho|\zeta|^{-2}\quad \mbox{for }\,\,|\zeta|\ge1.
		\end{align}
	\end{lemma}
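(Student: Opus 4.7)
The plan is to construct the solution $M^{pc}$ explicitly in terms of parabolic cylinder (Weber) functions and then read off both estimates from the standard asymptotics.

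First, I would undo the exponential/power jumps by setting
$\Psi^{pc}(\zeta):=M^{pc}(\zeta)\,\zeta^{-i\kappa\sigma_3}\,e^{i\zeta^2\sigma_3/4}$
away from $\Sigma^{pc}$. A direct computation using the jump matrices in Problem~\ref{PC} together with the identities $\zeta_+^{i\kappa}=\zeta_-^{i\kappa}e^{-2\pi\kappa}$ on the branch cut shows that $\Psi^{pc}$ has only piecewise-constant jumps across $\mathbb{R}$, and that each column satisfies the second-order ODE $\Psi''+\bigl(\tfrac12-\tfrac{\zeta^2}{4}+i\kappa\bigr)\Psi=0$, i.e.\ Weber's parabolic cylinder equation with index $a=i\kappa$. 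Using the connection formulae for $D_a(\zeta)$, $D_{a}(-\zeta)$, $D_{-a-1}(\pm i\zeta)$ one obtains an explicit piecewise representation of $\Psi^{pc}$, and hence of $M^{pc}$, and a direct calculation of the connection coefficients gives the values of $\beta_{12},\beta_{21}$ stated in the excerpt.

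To prove (\ref{eq:A.9}), I observe that on each open sector bounded by $\Sigma^{pc}\cup\mathbb{R}$ the function $M^{pc}$ is analytic and continuous up to the boundary, and the jumps across $\Sigma^{pc}$ are multiplication by matrices of the form $I+\text{(rapidly decaying entry)}$; in particular, within each sector, the explicit representation shows $M^{pc}(\zeta)=I+O(|\zeta|^{-1})$ as $|\zeta|\to\infty$ because the oscillatory factors $\zeta^{\pm i\kappa}e^{\mp i\zeta^2/4}$ exactly cancel the asymptotic $\zeta^a e^{-\zeta^2/4}$ of each Weber function in its sector of dominance. Combined with continuity on compact subsets of each sector and matching across $\Sigma^{pc}$, this yields a uniform bound $|M^{pc}(\zeta)|\le C$ for $\zeta\in\mathbb{C}\setminus\mathbb{R}$, with $C$ depending only on $\rho$ through $\kappa$.

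For (\ref{eq:A.10}) I use the classical large-argument expansion
$$D_a(\zeta)\sim \zeta^{a}e^{-\zeta^2/4}\sum_{k\ge 0}\frac{(-a)_{2k}}{k!(2\zeta^2)^{k}},\qquad |\arg\zeta|<3\pi/4,$$
together with the analogous expansion in the rotated sectors. Substituting into the explicit formula for $\Psi^{pc}$ and then undoing the gauge gives
$$M^{pc}(\zeta)=I+\frac{M^{pc}_1}{\zeta}+\frac{M^{pc}_2}{\zeta^{2}}+O(|\zeta|^{-3}),$$
where each $M^{pc}_k$ is a polynomial in $\beta_{12},\beta_{21}$ and $\kappa$. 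Since $|r_0|\le\rho$ implies $|\kappa|\le C\rho^2$ and $|\beta_{12}|,|\beta_{21}|\le C\rho$ (from the gamma-function expressions, using $|1/\Gamma(i\kappa)|\le C\sqrt{|\kappa|}$ for small $\kappa$ and continuity for $|\kappa|$ bounded away from $0$), every coefficient $M^{pc}_k$ for $k\ge 2$ is $O(\rho)$. Absorbing the tail of the asymptotic series into the $|\zeta|^{-2}$ term for $|\zeta|\ge 1$ gives the bound $|M^{pc}(\zeta)-I-\zeta^{-1}M^{pc}_1|\le C\rho|\zeta|^{-2}$.

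The main obstacle is the sharp $\rho$-dependence in (\ref{eq:A.10}): the naive reading of the Weber asymptotic only yields $O(|\zeta|^{-2})$, and the gain of a factor $\rho$ requires observing that at $r_0=0$ we have $\kappa=0$, the jump $V^{pc}$ becomes the identity and hence $M^{pc}\equiv I$, so every higher Taylor/asymptotic coefficient must vanish at $r_0=0$. Making this rigorous amounts to tracking the explicit dependence of each $M^{pc}_k$ ($k\ge 2$) on $\beta_{12},\beta_{21},\kappa$ and verifying that each such coefficient carries at least one factor of $\rho$; the explicit formula for the Weber coefficients $(-a)_{2k}/k!$ with $a=i\kappa$ makes this transparent.
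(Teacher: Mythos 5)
The paper gives no proof of Lemma \ref{A1} at all --- it is stated with ``it can be shown that'' and implicitly deferred to the cited parabolic-cylinder analysis of Cuccagna--Pelinovsky and Deift--Zhou --- and your proposal reconstructs exactly that standard argument: gauge away the oscillatory and power jumps, identify Weber's equation, solve explicitly in terms of $D_a$, and read both (\ref{eq:A.9}) and (\ref{eq:A.10}) off the large-$\zeta$ expansion, with the essential observation that every expansion coefficient of order $\ge 2$ carries a factor of $\beta_{12}\beta_{21}=\kappa=O(\rho^2)$, so the lemma's extra factor of $\rho$ is not lost. This is correct; the one slip is your bound $|1/\Gamma(i\kappa)|\le C\sqrt{|\kappa|}$, which should be $|1/\Gamma(i\kappa)|\le C|\kappa|$ for small $\kappa$ (since $|\Gamma(i\kappa)|^2=\pi/(\kappa\sinh\pi\kappa)\sim\kappa^{-2}$) in order to conclude $|\beta_{12}|,|\beta_{21}|\le C\rho$ --- although even the weaker $O(1)$ bound on $\beta_{12}$ alone would suffice, because each coefficient with $k\ge 2$ contains the full product $\beta_{12}\beta_{21}=\kappa$.
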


	\section{RH  problem  under  BT}\label{AB}

The following calculations are standard and   can be found in  \cite{24}.
Let $q (x)  \in H^{1,1}(\mathbb{R})$ be given and consider the associated ZS-AKNS operator and its reflection coefficient function $r(z)\in H^{1,1}(\mathbb{R})$.  Support that for each $x\in\mathbb{R}$, $2\times2$ matrix $\Phi(x,t;z)=\Psi(x,t;z)e^{it\theta(z)\sigma_3}$ solves the corresponding RH problem  with a finite number of simple bound states at
$$\mathcal{Z}=\{z\,|\,z=z_1,\dots,z_n\in\mathbb{C}^+\}, \ \mathcal{Z}^*=\{z|z=z^*_1,\dots,z^*_n\in\mathbb{C}_-\}.$$
	
	\begin{problem}
		Find an analytic function $\Psi:\mathbb{C}\setminus(\mathbb{R}\cup\mathcal{Z}\cup\mathcal{Z}^*)\to SL_2(\mathbb{C})$ with the following properties
		\begin{enumerate}
			\item $ \Psi(x,z)e^{it\theta\sigma_3}=I+\mathcal{O}(z^{-1})$ as $z\to\infty$.
			\item $\Psi(x,z)e^{it\theta\sigma_3}$ takes continuous boundary values $\Psi_{\pm}(x,z)e^{it\theta\sigma_3}$ which satisfy the jump relation $\Psi_+(x,z)e^{it\theta\sigma_3}=\Psi_-(x,z)e^{it\theta\sigma_3}V(z)$ where
			\begin{equation}\label{B1}
				V(z)=\begin{pmatrix}
					1+|r(z)|^2&r^*(z)\\
					r(z)&1
				\end{pmatrix}.
			\end{equation}
			\item $\Psi(z)$ has simple poles at each $z_k\in\mathcal{Z}$ and $z_k^*\in\mathcal{Z^*}$ ($1\leq k\leq n$) at which
			\begin{equation}
				\underset{z=z_k}{\mbox{Res}} \Psi(x,z)e^{it\theta\sigma_3}=\underset{z\to z_k}{\mbox{lim}}\Psi(x,z)e^{it\theta\sigma_3}\begin{pmatrix}
					0&0\\
					c_k&0
				\end{pmatrix},
			\end{equation}
			\begin{equation}
				\underset{z=z_k^*}{\mbox{Res}} \Psi(x,z)e^{it\theta\sigma_3}=\underset{z\to z_k^*}{\mbox{lim}}\Psi(x,z)e^{it\theta\sigma_3}\begin{pmatrix}
					0&-c_k^*\\
					0&0
				\end{pmatrix}.
	\end{equation}\end{enumerate}\end{problem}
	
	The goal is to add in another simple bound state at $z=\xi\in\mathbb{C}^+\setminus\{z_1,\dots,z_n\}$ and simultaneously at $z=\xi^*\in\mathbb{C}^-\setminus\{z^*_1,\dots,z^*_n\}.$ We use a Darboux transformation $(z+P)(\partial_x-L)=(\partial_x-\overline{L})(z+P)$ where $P$ can chosen in the form $P=\mathfrak{b}(x)P_0\mathfrak{b}^{-1}(x)$ and $P_0$ is a constant matrix and $\mathfrak{b}=\mathfrak{b}(x)$ solves the equation $b'=Qb-i\frac{\sigma_3}{2}bP_0$, here the approprate choice here is $P_0=-\begin{pmatrix}
		\xi&0\\
		0&\overline{\xi}
	\end{pmatrix}$, where $\mathfrak{b}$ is determined below. Set
	\begin{equation}
		\tilde{\Psi}(x,z)=\mathfrak{b}(x)\mu(z)\mathfrak{b}^{-1}(x)\Psi(x,z)\mu^{-1}(z),
	\end{equation}
		where $\mu(z)=z+P_0=\begin{pmatrix}
		z-\xi&0\\
		0&z-\xi^*
	\end{pmatrix}$. Note that $\tilde{\Psi}(x,z)e^{it\theta\sigma_3}\to I$ as $z\to\infty$. Let $\tilde{c}(\xi)$ be any nonzero constant. We want to choose $\mathfrak{b}(x)$ so that $\tilde{\Psi}$ has a simple pole in the first column at $z=\xi$ and a simple pole in the second column at $z=\xi^*$ such that for $x\in\mathbb{R}$,
	\begin{equation}\nonumber
		\underset{z=\xi}{\mbox{Res}} \tilde{\Psi}e^{it\theta\sigma_3}=\underset{z\to \xi}{\mbox{lim}}\tilde{\Psi}e^{it\theta\sigma_3}\begin{pmatrix}
			0&0\\
			\tilde{c }(\xi)&0
		\end{pmatrix},
	\end{equation}
	\begin{equation}\nonumber
		\underset{z=\xi^*}{\mbox{Res}}\tilde{ \Psi}e^{it\theta\sigma_3}=\underset{z\to \xi^*}{\mbox{lim}}\tilde{\Psi}e^{it\theta\sigma_3}
		\begin{pmatrix}
			0&-\tilde{c^*(\xi)}\\
			0&0
		\end{pmatrix}.
	\end{equation}\
	Since
	\begin{equation}\nonumber
		\mathfrak{b}^{-1}\tilde{\Psi}=\begin{pmatrix}
			(\mathfrak{b}^{-1}\Psi)_{11}&(\mathfrak{b}^{-1}\Psi)_{12}\frac{z-\xi}{z-\xi^*}\\
			\\
			(\mathfrak{b}^{-1}\Psi)_{21}\frac{z-\xi^*}{z-\xi}
			&(\mathfrak{b}^{-1}\Psi)_{21}\end{pmatrix},
	\end{equation}
	we have
	\begin{equation}\nonumber
		\underset{z_\xi}{\mbox{Res}}\,\mathfrak{b}^{-1}\tilde{ \Psi}=\begin{pmatrix}
			0&0\\
			(\mathfrak{b}^{-1}\Psi)_{21}(x,\xi)(\xi-\xi^*)&0
		\end{pmatrix},
	\end{equation}\
	but
	\begin{equation}\nonumber
		\underset{z\to\xi}{\mbox{lim}}\,\mathfrak{b}^{-1}\tilde{\Psi}\begin{pmatrix}
			0&0\\
			\tilde{c}(\xi)&0
		\end{pmatrix}
		=\begin{pmatrix}
			0&0\\
			\tilde{c}(\xi)(\mathfrak{b}^{-1}\Psi)_{22}(x,\xi)&0
		\end{pmatrix},
	\end{equation}
	and hence we must have
	\begin{equation}\nonumber
		(\xi-\xi^*)(e_2,\mathfrak{b}^{-1}\Psi(x,\xi)e_1)=\tilde{c}(\xi)(e_2,\mathfrak{b}^{-1}\Psi(x,\xi)e_2).
	\end{equation}
	Therefore, it follows necessarily that
	\begin{equation}\nonumber
		\mathfrak{b}(x)e_1=c_1(x)\big(\Psi(x,\xi)e_1-\frac{\tilde{c}(\xi)}{\xi-\xi^*}\Psi(x,\xi)e_2\big),
	\end{equation}
	for some nonzero function $c_1(x)$. Similarly for $z=\xi^*$, we see that
	\begin{equation}\nonumber
		\mathfrak{b}(x)e_2=c_2(x)\big(-\frac{\tilde{c}^*(\xi)}{\xi-\xi^*}\Psi(x,\xi^*)e_1+\Psi(x,\xi^*)e_2\big),
	\end{equation}
	for some nonzero function $c_2(x)$, Observe that $c_1(x)$ and $c_2(x)$ factor out in the formula (B.1) for $\tilde{\Psi}$. Set
	\begin{equation}
		\mathfrak{b}=\big(\Psi(x,\xi)\begin{pmatrix}
			1\\
			-\frac{\tilde{c}(\xi)}{\xi-\xi^*}\\
		\end{pmatrix}\quad \Psi(x,\xi^*)\begin{pmatrix}
			-\frac{\tilde{c}^*(\xi)}{\xi-\xi^*}\\
			1
		\end{pmatrix}\big).
	\end{equation}
	From the symmetry we see that $\mathfrak{b}_2=\begin{pmatrix}
		0&1\\
		-1&0
	\end{pmatrix}\mathfrak{b}_1^*$, where $\mathfrak{b}=(\mathfrak{b}_1,\mathfrak{b}_2)$. Thus, $\mbox{det} \mathfrak{b}(x)=|\mathfrak{b}_1)_1(x)|^2+|(\mathfrak{b}_1)_2(x)|^2>0$ and hence $\mathfrak{b}(x)$ is invertible for all $x\in\mathbb{R}$. The jump matrix $\tilde{V}$ for $\Psi(x,z)$ is given by
	\begin{align}
		\tilde{V}(z)&=\tilde{\Psi}_-^{-1}(x,z)\tilde{\Psi}_+(x,z)=\mu(z)V(z)\mu^{-1}(z)\\
		&=\begin{pmatrix}
			1+|\tilde{r}(z)|^2&\tilde{r}^*(z)\\
			\tilde{r}(z)&1
		\end{pmatrix},\qquad z\in\mathbb{R},
	\end{align}
	where
	$$\tilde{r}(z)=r(z)\frac{z-\xi}{z-\xi^*}.$$
	A straightforward calculation shows that for $1\leq k\leq n$,
	\begin{align}\nonumber
		&\underset{z_k}{\mbox{Res}} \tilde{\Psi}=\underset{z\to z_k}{\mbox{lim}}\tilde{\Psi}\begin{pmatrix}
			0&0\\
			\tilde{c}_k(z_k)&0
		\end{pmatrix},\\
		\,\,\,
		&\underset{z_k^*}{\mbox{Res}} \tilde{\Psi}=\underset{z\to z_k^*}{\mbox{lim}}\tilde{\Psi}\begin{pmatrix}
			0&-\tilde{c}_k(z_k)^*\\
			0&0
		\end{pmatrix},
	\end{align}
	where
	$$\tilde{c}(z_k)=c(z_k)\frac{z_k-\xi^*}{z_k-\xi}.$$
	The above calculations show that  $\tilde{M}(x,z)=\tilde{\Psi}(x,z)e^{-it\theta\sigma_3}$
	solves the RH Problem \ref{RH21}. Note that
	$$\tilde{a}(z)=\frac{z-\xi}{z-\xi^*}a(z),$$
	where $z(z)$ and $\tilde{a}(z)$ are the scattering functions for $\Psi(x,z)$ and $\tilde{\Psi}(x,z)$, respectively. From the fact that $\Psi_x+iz[\sigma_3,\Psi]=Q\Psi,Q=\begin{pmatrix}
		0&q\\
		-q^*&0
	\end{pmatrix}$, we have
	$$Q=-i[\sigma_3,\Phi_1],\quad \Psi=I+\frac{\Phi_1}{z}+\mathcal{O}(z^{-1}),$$
	as $z\to\infty$ in any cone $|\mbox{Im} z|>c|\mbox{Re} z|$, and $c>0$. Let $\mu_1\begin{pmatrix}
		\xi&0\\
		0&\xi^*
	\end{pmatrix}.$ For $\tilde{\Psi}(x,z)=\tilde{\Phi}(x,z)e^{-it\theta\sigma_3}$
	\begin{align}
		\tilde{\Phi}&=\mathfrak{b}\big(I-\frac{\mu_1}{z}\big)\mathfrak{b}^{-1}\big(I+\frac{\Phi_1}{z}+\mathcal{O}(z^{-1})\big)\big(I-\frac{\mu_1}{z}\big)^{-1}\\
		&=I+\frac{\Psi_1-\mathfrak{b}\mu_1\mathfrak{b}^{-1}+\mu_1}{z}+\mathcal{O}(z^{-1}),
	\end{align}
	and hence
	\begin{align}
		\tilde{q}(x)&=-i[\sigma_2,\Phi_1-\mathfrak{b}\mu_1\mathfrak{b}^{-1}+\mu_1]_{12}\\
		&=q(x)+i(\xi-\xi^*)\frac{(\mathfrak{b}_1)_1(\mathfrak{b}_1)_2^*}{|(\mathfrak{b}_1)_1|^2+|(\mathfrak{b}_1)_2|^2}.
	\end{align}
	One can also Darboux transformations similar to (\ref{B1}) to remove eigenvalues. We do not provide any further details, except to not that at each step, if the poles at $z=z_k,z_k^*$ are removed, then $r(z)\to\tilde{r}(z)\frac{z-z_k^*}{z-z_k}$, etc. \vspace{8mm}

	\noindent\textbf{Acknowledgements}
	
	This work is supported by  the National Natural Science
	Foundation of China (Grant No. 12271104,  51879045).\vspace{2mm}
	
	\noindent\textbf{Data Availability Statements}
	
	The data that supports the findings of this study are available within the article.\vspace{2mm}
	
	\noindent{\bf Conflict of Interest}
	
	The authors have no conflicts to disclose.

\end{document}